\documentclass[11pt]{article}
\usepackage{titling}

\usepackage[T1]{fontenc}
\usepackage[hmargin=2.5cm,vmargin=2.5cm]{geometry}
\usepackage{amsmath,amsthm,amssymb,mathtools}
\usepackage{thm-restate,thmtools}
\usepackage{xcolor}
\colorlet{sandbox}{orange!4!white}
\colorlet{sandborder}{orange!40!gray}
\colorlet{bluebox}{blue!4!white}
\colorlet{blueborder}{blue!20!gray}
\usepackage{bbm}
\usepackage[numbers]{natbib}
\usepackage{graphicx}
\usepackage{yfonts}
\usepackage{lineno}
\usepackage[normalem]{ulem}
\usepackage{todonotes}
\usepackage[pdftex,colorlinks,linkcolor=black,urlcolor=black,citecolor=black, hypertexnames=false]{hyperref}
\hypersetup{
colorlinks=true,
    linkcolor={red!50!black},
    citecolor={blue!50!black},
    urlcolor={blue!80!black},
bookmarksopen=true,
bookmarksnumbered,
bookmarksopenlevel=2,
bookmarksdepth=3
}
\usepackage[capitalise,nameinlink]{cleveref}
\usepackage{booktabs}
\usepackage{mathrsfs}
\usepackage{tabularx,tablefootnote}
\usepackage{microtype}
\newtheorem{theorem}{Theorem}[section]
\newtheorem{lemma}[theorem]{Lemma}
\newtheorem{corollary}[theorem]{Corollary}
\newtheorem{proposition}[theorem]{Proposition}
\newtheorem{conjecture}[theorem]{Conjecture}

\theoremstyle{definition}
\newtheorem{observation}[theorem]{Observation}

\newtheorem{remark}[theorem]{Remark}

\newcounter{tbox}

\makeatletter
\newcommand{\leqnomode}{\tagsleft@true}
\newcommand{\reqnomode}{\tagsleft@false}
\makeatother

\newcommand*{\myproofname}{Proof}
\newenvironment{claimproof}[1][\myproofname]{\begin{proof}[#1]}{\end{proof}}

\newtheoremstyle{parens}
  {}
  {}
  {\itshape}
  {\parindent}
  {}
  {}
  {.5em}
  {(\thmnumber{#2})\thmnote{ [#3]}}

\theoremstyle{parens}
\newtheorem{nitem}{}[section]

\crefname{theorem}{Thm.}{Thms.}
\Crefname{theorem}{Theorem}{Theorems}
\crefname{lemma}{Lem.}{Lems.}
\Crefname{lemma}{Lemma}{Lemmas}
\crefname{corollary}{Cor.}{Cors.}
\Crefname{corollary}{Corollary}{Corollaries}
\crefname{proposition}{Prop.}{Props.}
\Crefname{proposition}{Proposition}{Propositions}
\crefname{conjecture}{Conj.}{Conjs.}
\Crefname{conjecture}{Conjecture}{Conjectures}

\newcommand{\tw}{\mathsf{tw}}
\newcommand{\tin}{\mathsf{tree}\textnormal{-}\alpha}
\newcommand{\dg}{\mathsf{dgn}}
\newcommand{\ad}{\mathsf{ad}}
\newcommand{\N}{\mathbb{N}}

\newcommand{\sepalpha}{\mathsf{sep}\text{-}\alpha}
\newcommand{\ltw}{\mathsf{ltw}} \newcommand{\ltreealpha}{\mathsf{ltree}\textnormal{-}\alpha}
\newcommand{\sepnum}{\mathsf{sn}}

\newcommand{\omegar}{\omega_r}
\newcommand{\C}{\mathcal C}
\newcommand{\girth}{\mathsf{girth}}
\newcommand{\diam}{\mathsf{diam}}

\usepackage{tikz}
\usepackage{tcolorbox}
\usepackage{enumitem}
\usetikzlibrary{arrows}
\usetikzlibrary{arrows.meta}
\usetikzlibrary{decorations.pathmorphing, decorations.pathreplacing, decorations.shapes}
\usetikzlibrary{decorations.markings}
\usetikzlibrary{calc}
\usetikzlibrary{shapes.geometric}

\pgfkeys{/tikz/.cd,
    contour distance/.store in=\ContourDistance,
    contour distance=0pt, 
    contour step/.store in=\ContourStep,
    contour step=1pt,
}

\pgfdeclaredecoration{closed contour}{initial}
{%
\state{initial}[width=\ContourStep,next state=cont] {
    \pgfmoveto{\pgfpoint{\ContourStep}{\ContourDistance}}
    \pgfcoordinate{first}{\pgfpoint{\ContourStep}{\ContourDistance}}
    \pgfpathlineto{\pgfpoint{0.3\pgflinewidth}{\ContourDistance}}
    \pgfcoordinate{lastup}{\pgfpoint{1pt}{\ContourDistance}}
    
  }
  \state{cont}[width=\ContourStep]{
     \pgfmoveto{\pgfpointanchor{lastup}{center}}
     \pgfpathlineto{\pgfpoint{\ContourStep}{\ContourDistance}}
     \pgfcoordinate{lastup}{\pgfpoint{\ContourStep}{\ContourDistance}}
  }
  \state{final}[width=\ContourStep]
  { 
    \pgfmoveto{\pgfpointanchor{lastup}{center}}
    \pgfpathlineto{\pgfpointanchor{first}{center}}
  }
}

\tikzset{
  cir/.style = {circle,draw,fill,inner sep=.7pt},
  circ/.style = {circle,draw,fill,inner sep=1.3pt},
  circg/.style = {circle,draw=lightgray,fill=lightgray,inner sep=1.3pt},
  circr/.style = {circle,draw=Crimson,fill=Crimson,inner sep=1.3pt},
  invisible/.style = {circle,draw=none,inner sep=0pt,font=\tiny},
  nonedge/.style={decorate,decoration={snake,amplitude=.3mm,segment length=1mm},draw},
}

\tcbuselibrary{skins}

\newtcolorbox{mybox}[1]{minipage boxed title*=-2cm,
enhanced,attach boxed title to top center=
{yshift=-3mm,yshifttext=-1mm},colback=Lavender!30!white,
boxed title style={size=small,colback=Lavender},coltitle=black,
center title,title={#1}}

\newcommand{\email}[1]{%
  \texttt{#1}%
}

\newcommand\extrafootertext[1]{%
    \bgroup
    \renewcommand\thefootnote{\fnsymbol{footnote}}%
    \renewcommand\thempfootnote{\fnsymbol{mpfootnote}}%
    \footnotetext[0]{#1}%
    \egroup
}
\definecolor{sagegreen}{RGB}{143,161,125}
\usepackage{authblk}

\title{Clique-dependent strongly sublinear treewidth and strongly sublinear tree-independence number}
\date{}

\author{Andrea Munaro}

\affil{Department of Mathematical, Physical and Computer Sciences, University of Parma, Italy}

\begin{document}

\maketitle
\extrafootertext{\scriptsize Email: \email{andrea.munaro@unipr.it}.}


\begin{abstract} We establish a strongly sublinear counterpart of a recent result of Chudnovsky, E S, and Lokshtanov (arXiv 2025) on treewidth and tree-independence number. Namely, we prove that a hereditary graph class has strongly sublinear tree-independence number if and only if, for every fixed clique bound, its graphs of bounded clique number have strongly sublinear treewidth. In fact, this is part of a broader equivalence theorem. For hereditary classes, these conditions are also equivalent to having clique-dependent polynomial expansion, to admitting balanced separators whose size is bounded by $K\omega(G)^s |V(G)|^{1-\beta}$ for fixed $K,s,\beta>0$, and to admitting balanced clique-based separators of strongly sublinear size (equivalently, weight). Thus, we show that all these properties, which arose independently in the study of subexponential-time exact algorithms and polynomial-time approximation schemes, in fact describe the same hereditary graph classes. As a consequence of our equivalence theorem, we also show that every hereditary class $\mathcal C$ with strongly sublinear tree-independence number admits a subexponential-time algorithm that, given $G\in\mathcal C$, computes a tree decomposition of $G$ with strongly sublinear independence number.

\end{abstract}

\section{Introduction}

Tree decompositions are a fundamental tool in structural and algorithmic graph
theory. The classical measure of the complexity of a tree decomposition is given by the maximum size of its bags, leading to the notion of treewidth $\tw(G)$ of a graph $G$. However, since large cliques alone force large treewidth, it is natural to look for new measures of complexity that are insensitive to clique size. One particularly successful width parameter arising in this way is the tree-independence number $\tin(G)$ of a graph $G$, introduced independently by \citet{Yolov18}
and \citet{DMS24}. Here, instead
of bounding the size of every bag, one asks that the independence number of
every bag be bounded. Similarly to treewidth, tree-independence
number has important algorithmic consequences; in particular, several optimization problems that are hard on general graphs become tractable on
classes of bounded tree-independence number (see, e.g., \cite{DMS24,LMMORS24,LPR26,Yolov18}).

A natural question is then how bounds on treewidth in terms of the clique number $\omega$ translate into bounds on tree-independence number and vice versa. A simple application of Ramsey’s theorem shows that every hereditary graph class $\mathcal{C}$ of bounded (by a constant) tree-independence number is
$(\tw, \omega)$-bounded, i.e., there exists a function $f$ such that $\tw(G) \leq f(\omega(G))$ for every $G \in \mathcal{C}$, and a well-known conjecture of \citet[Conjecture~8.5]{DMS24a} asserts that the converse holds as well, namely every hereditary
$(\tw,\omega)$-bounded class has bounded (by a constant) tree-independence number. This conjecture was recently disproved by \citet{CT25}, even in the case the binding function $f$ is polynomial. However, \citet{CESL25} showed that a quantitative
version of the conjecture survives at the polylogarithmic scale.
More precisely, they proved that, for a hereditary class $\mathcal C$, having $\tin(G)\le (\log |V(G)|)^{O(1)}$ for every $G\in\mathcal C$ on at least three vertices is equivalent to having $\tw(G)\le (\omega(G)\log |V(G)|)^{O(1)}$.

The starting point of this paper is the corresponding question at the \emph{strongly sublinear} scale. We prove that a similar behavior occurs: For a hereditary class, having strongly sublinear treewidth on every bounded-clique subclass is equivalent to having strongly sublinear tree-independence number. To formalize this, we introduce the following terminology. A hereditary class $\C$ is \emph{strongly sublinearly
$(\tw,\omega)$-bounded} if there exist functions
$f\colon\mathbb N\to\mathbb R_{>0}$ and
$g\colon\mathbb N\to(0,1]$ such that every $G\in\C$ satisfies $\tw(G)
  \le f(\omega(G))|V(G)|^{1-g(\omega(G))}$. Note that no restriction is imposed on the dependence of either function on the clique number. We show that a hereditary class $\mathcal{C}$ is strongly sublinearly
$(\tw,\omega)$-bounded if and only if it has strongly sublinear tree-independence number, i.e., there exist constants $c\geq 1$ and $\varepsilon > 0$ such that $\tin(G) \leq c|V(G)|^{1-\varepsilon}$, for every $G \in \mathcal{C}$. 

Our result turns out to be considerably more general. In particular, it identifies the preceding properties with several other properties that arose independently in the study of subexponential-time exact algorithms and polynomial-time approximation schemes, especially in relation to geometric intersection graphs. In order to explain this, we need to recall some definitions.

Let $G$ be a graph. A set $S\subseteq V(G)$ is a \emph{balanced separator} of $G$ if every
connected component of $G-S$ has at most $2|V(G)|/3$ vertices. Let $\mathsf{sep}(G):= \min\{|S| : S\subseteq V(G)\text{ is a balanced separator of }G\}$, and let $\sepalpha(G):= \min\{\alpha(G[S]): S\subseteq V(G)\text{ is a balanced separator of }G\}$. A graph class $\mathcal{C}$ has
\emph{strongly sublinear separators} if there exist constants $c\ge1$ and
$\varepsilon>0$ such that $\mathsf{sep}(G)\le c|V(G)|^{1-\varepsilon}$, for every $G \in \mathcal{C}$, and \emph{strongly sublinear $\alpha$-separators} if
there exist constants $c\ge1$ and $\varepsilon>0$ such that $\sepalpha(G)\le c|V(G)|^{1-\varepsilon}$, for every $G \in \mathcal{C}$. A graph class $\mathcal C$ has \emph{sublinear separators} if there exists a function $f(n)=o(n)$ such that every $n$-vertex graph $G\in\mathcal C$ has a balanced separator of size at most $f(n)$. Finally, we say that
a hereditary class $\C$ is \emph{strongly sublinearly
$(\mathsf{sep},\omega)$-bounded} if there exist functions
$f\colon\mathbb N\to\mathbb R_{>0}$ and
$g\colon\mathbb N\to(0,1]$ such that every $G\in\C$ satisfies $\mathsf{sep}(G)
\le f(\omega(G))|V(G)|^{1-g(\omega(G))}$. It follows from \cite{DN19} (see also \Cref{DNsep}) that a hereditary class is strongly sublinearly
$(\mathsf{sep},\omega)$-bounded if and only if it is strongly sublinearly
$(\tw,\omega)$-bounded.  

For a graph class $\C$ and $q\in\mathbb{N}$, let $\C_{\le q}:=\{G\in\C:\omega(G)\le q\}$. Following \citet{DLPSXZ23}, we say that a class $\C$ has \emph{clique-dependent polynomial
expansion} if every subclass of $\C$ of bounded clique number has polynomial
expansion; equivalently, every slice $\C_{\le q}$ has polynomial expansion.
For hereditary classes, a well-known theorem of \citet{DN16} implies that this is equivalent to having strongly sublinear separators on every bounded-clique slice. 

A second property is related to clique-based separators \cite{ABT25,DMMY25,BBK20,dBKMT23}. A \emph{clique-based separator} of a graph $G$ is a collection $\mathcal{S}$ of vertex-disjoint cliques whose union is a balanced 
separator of $G$. The \textit{size} of $\mathcal{S}$ is $|\mathcal{S}|$ and the \textit{weight} of $\mathcal{S}$ is the quantity $w(\mathcal{S}) := \sum_{C\in\mathcal{S}} \log(|C|+1)$. We say that a class $\mathcal{C}$ has \emph{clique-based separators of strongly sublinear size} (resp., \emph{weight}) if there exists $\varepsilon>0$ such that every $n$-vertex graph in $\mathcal{C}$ has a clique-based separator of size (resp., weight) $O(n^{1-\varepsilon})$, and $\mathcal C$ has \emph{sublinear-weight clique-based separators} if there exists a function $f(n)=o(n)$ such that every $n$-vertex graph $G\in\mathcal C$ has a clique-based separator of weight at most $f(n)$. 

A third property, considered by \citet{LPSXZ25}, is the following. For $K,s\ge0$, and $\beta>0$, a graph $G$ admits \emph{balanced $(K, s, \beta)$-separators}\footnote{In their definition, \citet{LPSXZ25} use $1/2$-balanced separators. This is equivalent to our formulation up to a constant-factor change.} if every induced subgraph $H$ of $G$ satisfies $\mathsf{sep}(H)
\le K\omega(H)^s |V(H)|^{1-\beta}$. Note that, by \cite{DN16}, if a graph class admits balanced $(K, s, \beta)$-separators, then the class has clique-dependent polynomial expansion.  

Our main theorem shows that all these properties describe exactly the same
hereditary graph classes (see also \Cref{reldiagram}).

\begin{restatable}{theorem}{equivalence}
\label{equivalence}
Let $\C$ be a hereditary class. The following are equivalent.
\begin{enumerate}[label=(\roman*)]
  \item $\C$ has clique-dependent polynomial expansion;
  \item $\C$ is strongly sublinearly $(\mathsf{sep},\omega)$-bounded;
  \item $\C$ is strongly sublinearly $(\tw,\omega)$-bounded;
  \item there exist constants $K\ge1$, $\beta>0$, and $s\ge0$ such that every $G\in\C$ satisfies $\mathsf{sep}(G)
   \le K\,\omega(G)^s |V(G)|^{1-\beta}$;
  \item $\C$ has clique-based separators of strongly sublinear size;
  \item $\C$ has clique-based separators of strongly sublinear weight;
  \item $\C$ has strongly sublinear tree-independence number;
  \item $\C$ has strongly sublinear $\alpha$-separators.
\end{enumerate}
\end{restatable}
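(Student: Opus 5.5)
The plan is to close a cycle of implications. The equivalences among (i), (ii) and (iii) come from Dvořák--Norin, and a few implications are elementary. That leaves two genuinely new directions: getting from the bounded-clique world ((i)--(iii)) to the clique-insensitive world ((v)--(viii)), and getting back. I will also need (iv) to sit in the cycle.

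\emph{Routine implications.} (i)$\Leftrightarrow$(ii): each slice $\C_{\le q}$ is hereditary, and by \cite{DN16} a hereditary class has polynomial expansion if and only if it has strongly sublinear separators. (ii)$\Leftrightarrow$(iii): apply \cite{DN19} (see \Cref{DNsep}) slice by slice. Treewidth bounds separators directly, and conversely strongly sublinear separators on a hereditary slice give treewidth $O(f(q)n^{1-g(q)})$. (iv)$\Rightarrow$(ii) is immediate with $f(q)=Kq^s$ and $g\equiv\beta$. (vi)$\Rightarrow$(v) holds because $\log(|C|+1)\ge 1$, so the size of a clique-based separator is at most its weight. (v)$\Rightarrow$(viii) holds because a union of $k$ cliques has independence number at most $k$. (vii)$\Rightarrow$(viii) is the standard fact that some bag of a tree decomposition is a balanced separator. (viii)$\Rightarrow$(vii) is the usual Robertson--Seymour recursive construction. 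It needs balanced separators with respect to arbitrary vertex weightings, and these follow from heredity by applying (viii) to suitable induced subgraphs. The independence numbers of the separators along a root-to-leaf recursion form a geometric series $\sum_i c(n(2/3)^i)^{1-\varepsilon}=O(n^{1-\varepsilon})$.

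\emph{From bounded clique number to clique-based separators.} The core step is (i) or (iii) $\Rightarrow$ (iv) and (vi). First, since $\C_{\le 2}$ has polynomial expansion, it has bounded degeneracy. By Ramsey's theorem, any $G\in\C$ containing a large $K_{t,t}$ subgraph contains either a large clique or a large induced $K_{a,a}$. Hence, for each $q$, the slice $\C_{\le q}$ is $K_{t(q),t(q)}$-subgraph-free. Now fix a constant $q$ and take a maximal family $\mathcal K$ of vertex-disjoint cliques in $G$, greedily choosing cliques of size roughly $n^{\delta}$ and then smaller ones down to size $q+1$. The leftover graph $G'=G-\bigcup\mathcal K$ has $\omega(G')\le q$, so it has a separator of size $f(q)n^{1-g(q)}$. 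I would then form an auxiliary graph $H$ by contracting each clique of $\mathcal K$ into a single vertex. The aim is to show, using the $K_{t,t}$-freeness of the bounded-clique slices together with polynomial expansion, that $H$ lies in a class of polynomial expansion. The weight $\log(|C|+1)$ is exactly what should make the size-classes of cliques summable.

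A balanced separator of $H$, pulled back to $G$, is a clique-based separator of strongly sublinear weight, which gives (vi). Reading off the dependence on $\omega$ in the same argument should give the polynomial bound $K\omega^s n^{1-\beta}$ of (iv): the separator consists of at most $\omega$ vertices from each of its $O(n^{1-\beta'})$ cliques, and balancing the thresholds yields $s$.

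\emph{Closing the cycle and the main obstacle.} It remains to return from (vii) or (viii) to (iii). Given $G\in\C_{\le q}$ and a tree decomposition whose bags have $\alpha\le cn^{1-\varepsilon}$, plain Ramsey bounds each bag only by about $\alpha^q$, which is not sublinear. Instead I would recurse inside the bags: each bag induces a graph in $\C_{\le q}$ with small independence number, and I would apply (viii) repeatedly to such graphs to improve the exponent, so that after $O(q)$ rounds the exponent $g(q)>0$ survives. The contraction step in the previous paragraph, that is, proving that $H$ has polynomial expansion, is where I expect the real difficulty. Cliques of different sizes interact, and the argument must avoid any dependence on $\omega(G)$, which can be as large as $n$. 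I would first try to prove it using the induced-biclique exclusion and a density-increment argument on shallow minors of $H$.
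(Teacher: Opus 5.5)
Your elementary implications are fine. In particular, your direct (viii)$\Rightarrow$(vii) works, and it does not even need weighted separators: put a balanced separator $S$ in the root bag, recurse on the components of $G-S$, and add $S$ to all their bags. However, both nontrivial directions have genuine gaps.

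\emph{From (i)--(iii) to (iv)/(vi).} With $q$ fixed, up to $n/(q+1)$ cliques get peeled. So everything rests on your claim that the contracted graph $H$ has polynomial expansion, and that claim is false.

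Let $G_k$ consist of a clique $\{c_1,\dots,c_k\}$ and cliques $L_1,\dots,L_k$ of size $k$, with $c_i$ complete to $L_i$ and no further edges. An induced subgraph with clique number at most $q$ meets $\{c_1,\dots,c_k\}$ in at most $q$ vertices. Deleting these leaves disjoint cliques of size at most $q$, so its treewidth is below $2q$. Hence the hereditary closure of $\{G_k\}$ satisfies (i)--(iii). Yet the cliques of maximum size $k+1$ are exactly the sets $\{c_i\}\cup L_i$. Largest-first greedy takes all of them, they partition $V(G_k)$, and contracting them gives $H=K_k$ with $k\approx\sqrt n$. This $H$ lies in no class of bounded expansion.

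More fundamentally, your sketch never confronts the actual difficulty. The functions $f(q)$ and $g(q)$ in (ii)/(iii) are arbitrary, whereas a strongly sublinear number of peeled cliques forces a peeling threshold growing like $n^{\gamma}$. So you first need a bound $K\omega^s n^{1-\beta}$ with $\beta$ independent of $\omega$, i.e.\ (iv). Only then does (v) follow, by peeling at threshold $n^{\beta/(s+1)}$ (\Cref{peeling}).

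The paper gets (iv) by combining polynomial $(\dg,\omega)$-boundedness (\Cref{poly-dgnomega}) with the induced sparsification lemma (\Cref{induced-sparsification}). That lemma says a large $r$-shallow $K_t$ minor in a $d$-degenerate graph yields an induced $K_7$-free subgraph of order $O(t(r+1)/L)$ and treewidth $\Omega(t/L)$. Consequently the separator exponent of the single slice $\mathcal C_{\le 6}$ forces $\omegar(G)\le C(1+d^6)(r+1)^{1/\eta_6}$ in every slice. Plotkin--Rao--Smith (\Cref{PRS}) then turns this into (iv).

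\emph{Returning from (vii)/(viii).} Your ``recursion inside the bags'' is not an argument. Applying (viii) again inside a bag, or inside $S$, only ever produces sets of small \emph{independence number}. Nothing in the iteration converts this into a bound on $|S|$, so it cannot beat the Ramsey bound $|S|\lesssim\alpha(G[S])^{q}$. What is needed is that $\chi$ (equivalently here, degeneracy) is bounded on each slice $\mathcal C_{\le q}$, and this requires real structural input.

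The bipartite members of $\mathcal C$ are easy: they satisfy $|S|\le 2\alpha(G[S])$, so \Cref{degeneracy-from-sep} makes them $d$-degenerate. Your Ramsey trick then makes the slices $K_{t,t}$-subgraph-free. But that does not bound degeneracy, since $C_4$-free graphs can have arbitrarily large minimum degree.

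The paper bridges exactly this step with the Kwan--Letzter--Sudakov--Tran theorem (\Cref{induced-bipartite}). It says a $K_{q+1}$-free graph of huge minimum degree contains an induced bipartite subgraph of minimum degree exceeding $d$. Hence every graph in $\mathcal C_{\le q}$ is $(\delta_q-1)$-degenerate, and therefore $|S|\le\delta_q\,\alpha(G[S])\le\delta_q c\,n^{1-\varepsilon}$, which gives (ii).
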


In particular, the equivalence \textup{(iii)}$\Leftrightarrow$\textup{(vii)} is the
strongly sublinear counterpart of the polylogarithmic theorem of \citet{CESL25} mentioned above. There is, however, an additional phenomenon here.
Condition~\textup{(iii)} permits a different power saving for every clique bound, whereas condition~\textup{(vii)} has a single exponent valid on
the whole class. Thus, \Cref{equivalence} contains a nontrivial
\emph{uniformization} statement. 
Indeed, \textup{(ii)}$\Rightarrow$\textup{(iv)} shows that if every
bounded-clique slice has strongly sublinear separators, then there exist
constants $K\ge1$, $\beta>0$, and $s\ge0$ such that
$\mathsf{sep}(G)
\le K\omega(G)^s |V(G)|^{1-\beta}$, for every $G\in\C$. We prove this as \Cref{poly-coefficient}. Thus, both sources of nonuniformity in Conditions~\textup{(ii)} and ~\textup{(iii)} can be controlled:
the power saving in $|V(G)|$ becomes independent of the clique number, and
the remaining multiplicative dependence on the clique number can be chosen
polynomial. 

Note that most implications in \Cref{equivalence} either follow from definitions (e.g., \textup{(v)}$\Leftrightarrow$\textup{(vi)}), or from known results (e.g., \textup{(ii)}$\Leftrightarrow$\textup{(iii)}), and the main work is devoted to showing \textup{(ii)}$\Rightarrow$\textup{(iv)}$\Rightarrow$\textup{(v)}. The proof of this spans \Cref{sec:sparsification,sec:uniformization,sec:theproof} and we provide a sketch in \Cref{overview}.  

We also remark that the full set of equivalences in \Cref{equivalence} for the strongly sublinear regime ceases to hold for the polylogarithmic regime. For example, there are hereditary classes with bounded (by a constant) tree-independence number and no clique-based separators of polylogarithmic size (see \Cref{polylog}).

\Cref{equivalence} has several interesting consequences, particularly from the viewpoint of geometric intersection graphs, as we explain next. Even though geometric intersection graphs need not be sparse, after bounding the clique number many important such classes behave like sparse classes. \citet{DLPSXZ23} showed that pseudo-disk graphs have clique-dependent polynomial expansion and it is known that the same holds for intersection graphs of convex fat objects (see, e.g., \cite{HP17,MTTV97,SW98}). These properties were used in \cite{DLPSXZ23} to obtain
approximation algorithms for \textsc{Subgraph Hitting} and related problems. \citet{LPSXZ25} observed that pseudo-disk graphs and intersection graphs of convex fat objects not only have clique-dependent polynomial expansion but in fact admit balanced $(K, s, \beta)$-separators. Moreover, they used this property to develop a subexponential parameterized
framework for \textsc{Weighted $\mathcal F$-Hitting}. Similarly, clique-based separators have played a central role in subexponential-time algorithms. \citet{BBK20} showed that, for every fixed dimension $d \geq 2$, intersection graphs of convex fat objects in
$\mathbb R^d$ admit clique-based separators of weight $O(n^{1-1/d})$, while \citet{dBKMT23} proved a bound of
$O(n^{2/3}\log n)$ for pseudo-disk graphs, together with analogous results
for several other geometric intersection classes. \Cref{equivalence} shows that the appearance of clique-dependent polynomial expansion, balanced $(K, s, \beta)$-separators, and clique-based separators of strongly sublinear weight is not coincidental: for hereditary classes, they are three
manifestations of the same structural property. 

Another important consequence of \Cref{equivalence} is related to the fact that the different equivalent conditions come with
complementary algorithmic frameworks, as already suggested by the previous paragraph. Indeed, clique-dependent polynomial expansion
was introduced in the study of approximation algorithms for \textsc{Subgraph
Hitting}~\cite{DLPSXZ23}, whereas classes admitting balanced $(K, s, \beta)$-separators are the setting of the subexponential parameterized
framework for \textsc{Weighted $\mathcal F$-Hitting} from \citet{LPSXZ25}. Moreover, clique-based separators of sublinear weight yield subexponential-time algorithms for problems such as \textsc{Independent Set}, \textsc{Feedback Vertex Set}, and \textsc{$q$-Coloring} \cite{BBK20,dBKMT23}. Finally, \citet{LPR26} showed that a broad family of problems asking for a max-weight induced subgraph of bounded treewidth satisfying a given $\mathsf{CMSO}_2$ property can be solved in time $n^{O(k)}$ on graphs of tree-independence number $k$, provided a tree decomposition of independence
number at most $k$ and $n^{O(1)}$ nodes is given (see also \cite{LMMORS24,Yolov18}). Their conditional statement is relevant when $k$ is strongly sublinear. Indeed, the general algorithm of \citet{DFGKM26} constructs, on graphs of tree-independence number at most $k$, a tree decomposition of independence number at most $8k$ in time $2^{O(k^2)}n^{O(k)}$; as observed in \cite{LPR26}, the factor $2^{O(k^2)}$ prevents one from directly obtaining subexponential-time algorithms when $k=\Omega(\sqrt n)$. We show that this preprocessing obstacle can be overcome for every hereditary class satisfying the equivalent conditions of \Cref{equivalence} by reducing to the single-exponential treewidth approximation algorithm of \citet{Kor23} (see \Cref{overview} for more details).

\begin{restatable}{corollary}{algo}
\label{algo}
Let $\mathcal C$ be a hereditary class with strongly sublinear
tree-independence number. Then there exist constants
$c\ge1$ and $\varepsilon>0$ and an algorithm
that, given an $n$-vertex graph $G\in\mathcal C$, constructs
in time $2^{O(n^{1-\varepsilon})}$ a tree decomposition of
$G$ with at most $n$ bags and independence number at most
$cn^{1-\varepsilon}$.
\end{restatable}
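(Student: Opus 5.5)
We plan to derive \Cref{algo} from \Cref{equivalence}, specifically from the implication (vii)$\Rightarrow$(v): since $\C$ has strongly sublinear tree-independence number, it has balanced clique-based separators of size $O(n^{1-\varepsilon_0})$ for some $\varepsilon_0>0$. The idea is to replace $G$ by an auxiliary graph of small treewidth, run the $2^{O(k)}$-time treewidth approximation of \citet{Kor23} on it, and transfer the resulting decomposition back to $G$. The natural auxiliary object is a \emph{clique contraction}. For a partition of $V(G)$ into cliques, contracting each part gives a graph $G'$. Any tree decomposition of $G'$ of width $w$ becomes a tree decomposition of $G$ by replacing each contracted vertex with its clique. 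The resulting bags are unions of at most $w+1$ cliques, so their independence number is at most $w+1$.

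\textbf{Step 1: choose a clique partition.} I would take a \emph{maximal clique packing} greedily, or more robustly a partition obtained by repeatedly extracting a maximum clique. Computing a maximum clique exactly costs $n^{O(\omega)}$, which is too expensive when $\omega$ is large. A cheaper option is to take any partition of $V(G)$ into cliques, for instance greedily. The key structural point is then to show that $\tw(G')$ is strongly sublinear. Here I would use the clique-based separators from (v). A clique of $G$ meets only boundedly many parts? That fails in general, so this approach needs care.

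\textbf{Fallback for Step 1: avoid contraction and recurse directly on separators.} Instead, I would build the decomposition recursively. Given $G$, find a balanced separator $S$ with $\alpha(G[S])\le cn^{1-\varepsilon}$ by brute force over candidate sets. Enumerating all sets is too costly. Instead, enumerate collections of at most $m=O(n^{1-\varepsilon_0})$ cliques from (v): guess $m$ vertices as ``anchors'' and take, for each anchor $v$, a clique inside $N[v]$. This still requires finding the cliques. A cleaner route is to use the equivalent condition (iv), $\mathsf{sep}(H)\le K\omega(H)^s|V(H)|^{1-\beta}$, together with a sparsification step from \Cref{sec:sparsification}. If that section provides an efficiently computable map from $G$ to a graph of bounded clique number, or from $G$ to a clique cover whose quotient has strongly sublinear treewidth, then Kor23 applies with $k=O(n^{1-\varepsilon})$. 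It runs in time $2^{O(k)}n^{O(1)}=2^{O(n^{1-\varepsilon})}$ and returns a decomposition of width $O(k)$. We normalise this to at most $n$ bags by standard pruning and lift it as described above.

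\textbf{Main obstacle.} The difficulty is computing, in subexponential time, a clique partition whose quotient graph has strongly sublinear treewidth. I would first try to show that the proof of (ii)$\Rightarrow$(iv)$\Rightarrow$(v) is constructive. The natural candidates are the sparsification and uniformization sections: I expect them to produce the clique-based separator from a partition into cliques chosen greedily by size, with the quotient inheriting polynomial expansion. If so, the greedy partition can be computed in subexponential time, for example by enumerating cliques of size up to $n^{\delta}$ and treating large cliques via the separator bound. Then the quotient has treewidth $O(n^{1-\varepsilon})$ by \citet{DN19}, and the reduction to \citet{Kor23} completes the proof. Finally, the balanced-separator recursion with $2/3$-balance keeps the total independence number of any bag at $O(n^{1-\varepsilon})$ via a geometric series.
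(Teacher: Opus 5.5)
There is a genuine gap: the proposal never reaches an algorithm. Its central step is conditional (``If that section provides an efficiently computable map\ldots''), and \Cref{sec:sparsification} provides no such map. \Cref{induced-sparsification} is a probabilistic existence statement about shallow clique minors. It is used only to establish the separator bound (iv), and it does not output a clique partition with a low-treewidth quotient.

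Your contraction route stalls where you yourself notice it does: an arbitrary clique partition gives no control over the quotient. The recursive fallback needs to \emph{find} balanced separators $S$ with small $\alpha(G[S])$, which you concede your enumeration cannot do. So the closing ``geometric series'' argument rests on a step you do not have. Your worry that maximum cliques cost $n^{O(\omega)}$ is also misplaced: you never need a maximum clique, only cliques of a prescribed size.

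The missing idea is to peel only \emph{large} cliques and put all peeled vertices into every bag, exactly as in the proof of \Cref{peeling}.
\begin{itemize}
\item Take $K,s,\beta$ from (iv), available via \Cref{equivalence}. Decrease $\beta$ so that $\gamma:=\beta/(s+1)<1/2$, and set $q:=\lceil n^\gamma\rceil$.
\item Repeatedly find and delete a clique on $q+1$ vertices by brute force. The total time is $n^{O(q)}=2^{O(n^\gamma\log n)}=2^{O(n^{1-\gamma})}$.
\item The union $X$ of the at most $n/(q+1)\le n^{1-\gamma}$ peeled cliques satisfies $\alpha(G[X])\le n^{1-\gamma}$.
\item The remainder $R$ has $\omega(R)\le q$. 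Since the bound in (iv) is inherited by induced subgraphs, \Cref{subgraph-separator} and \Cref{DNsep} give $\tw(R)\le 15Kq^sn^{1-\beta}\le 15K2^sn^{1-\gamma}$. The paper obtains the same bound differently, from \Cref{alpha-separators-poly-dgomega} and $\tw(R)+1\le\chi(R)\tin(R)$.
\item Run \citet{Kor23} with $k=O(n^{1-\gamma})$. It returns a decomposition of $R$ of width $O(n^{1-\gamma})$ in time $2^{O(n^{1-\gamma})}$.
\item Add $X$ to every bag. Each new bag satisfies $\alpha(G[B_t\cup X])\le|B_t|+n^{1-\gamma}=O(n^{1-\gamma})$.
\end{itemize}

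Your contraction idea can in fact be rescued with precisely this partition (the peeled cliques plus the singletons of $R$), since the quotient then has treewidth at most $\tw(R)+n^{1-\gamma}$. What your proposal lacks is the decisive ingredient: the treewidth bound on the low-clique remainder, which comes from the polynomial dependence on $\omega$.
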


Hence, every hereditary class satisfying any of the equivalent conditions in \Cref{equivalence} enjoys the algorithmic consequences of these frameworks. 

In this paper, we also investigate how the picture given by \Cref{equivalence} interacts with another algorithmically useful notion, namely fractional $\tin$-fragility. Let us first recall some definitions. Let $p$ be a width parameter in $\{\tw, \tin\}$. For $0\leq \beta \leq 1$, a \textit{$\beta$-general cover} of a graph $G$ is a multiset $\mathcal{C}$ of subsets of $V(G)$ such that each vertex belongs to at least $\beta|\mathcal{C}|$ elements of the cover. The \textit{$p$-width} of the cover is $\max_{C \in \mathcal{C}}p(G[C])$. For a parameter $p$, a graph class $\mathcal{G}$ is \textit{fractionally $p$-fragile} if there exists a \emph{fragility function} $f\colon\mathbb{N}\rightarrow\mathbb{N}$ such that, for every $r\in\mathbb{N}$, every $G \in \mathcal{G}$ has a $(1 - 1/r)$-general cover with $p$-width at most $f(r)$. A fractionally $p$-fragile class $\mathcal{G}$ is \textit{efficiently fractionally $p$-fragile} if there exists an algorithm that, for every $r\in\mathbb{N}$ and $G \in \mathcal{G}$, returns in $\mathsf{poly}(|V(G)|)$ time a $(1 - 1/r)$-general cover $\mathcal{C}$ of $G$ and, for each $C \in \mathcal{C}$, a tree decomposition of $G[C]$ of width (if $p = \tw$) or independence number (if $p = \tin$) at most $f(r)$, for some function $f\colon\mathbb{N}\rightarrow\mathbb{N}$. Fractional $\tw$-fragility was introduced by \citet{Dvo16} and the definition above is taken from \cite{GMY23}. Efficient fractional $\tin$-fragility allows for PTASes for a large number of maximization problems whose task is finding a max-weight induced subgraph with bounded clique number satisfying a fixed $\mathsf{CMSO_2}$ formula expressing a near-monotone property (see \cite{GMY24} for details).

In \Cref{sec:frac-tree-alpha-separators}, building on \cite{Dvo16}, we observe that $\sepalpha(G)\le |V(G)|/r+f(r)$,
for every graph $G$ in a fractionally $\tin$-fragile class with fragility function $f$. As a first consequence, we show that every fractionally $\tin$-fragile class has sublinear-weight clique-based separators. This answers in the positive a question of \citet{GMY24} asking whether every graph class of bounded layered tree-independence
number has sublinear-weight clique-based separators.
A second consequence is that polynomial-rate fractional $\tin$-fragility (i.e., the fragility function is polynomial) implies the
equivalent properties of \Cref{equivalence}. We conjecture the following fractional counterpart of our main equivalence.

\begin{conjecture}\label{alphaDvorak}  Every hereditary class admitting strongly sublinear $\alpha$-separators is fractionally $\tin$-fragile.
\end{conjecture}  

Note that the fragility function in \Cref{alphaDvorak} might be polynomial; we are not aware of any counterexample to this. \Cref{alphaDvorak} is an
$\alpha$-analogue of the following conjecture of \citet{Dvo16} and we next observe that it would imply it.

\begin{conjecture}[\citet{Dvo16}]\label{Dvorak} Every hereditary class admitting strongly sublinear separators is fractionally $\tw$-fragile.
\end{conjecture}

\begin{lemma}
\label{alpha-dvorak-implies-dvorak}
If \Cref{alphaDvorak} holds, then \Cref{Dvorak} holds.
\end{lemma}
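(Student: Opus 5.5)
Assume \Cref{alphaDvorak} and let $\C$ be a hereditary class with strongly sublinear separators, say $\mathsf{sep}(G)\le c|V(G)|^{1-\varepsilon}$ for all $G\in\C$. The plan is to show that $\C$ is fractionally $\tin$-fragile, and then convert $\tin$-fragility into $\tw$-fragility using the fact that $\C$ has bounded clique number.

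\textbf{Step 1: $\C$ has strongly sublinear $\alpha$-separators.} A balanced separator $S$ satisfies $\alpha(G[S])\le|S|$, so $\sepalpha(G)\le\mathsf{sep}(G)\le c|V(G)|^{1-\varepsilon}$ for every $G\in\C$. By \Cref{alphaDvorak}, $\C$ is therefore fractionally $\tin$-fragile with some fragility function $f$.

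\textbf{Step 2: $\C$ has bounded clique number.} Since $\C$ is hereditary, $K_q\in\C$ whenever $\omega$ is unbounded on $\C$. But $\mathsf{sep}(K_q)\ge q/3$, because $K_q-S$ is connected and so needs $|S|\ge q-2q/3$. This contradicts $q/3\le cq^{1-\varepsilon}$ for large $q$, so there is a constant $\omega_0$ with $\omega(G)\le\omega_0$ for all $G\in\C$.

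\textbf{Step 3: convert $\tin$ to $\tw$.} We use the Ramsey argument mentioned in the introduction. If a bag $B$ satisfies $\alpha(G[B])\le k$ and $\omega(G[B])\le\omega_0$, then $|B|<R(\omega_0+1,k+1)$. Hence $\tw(H)<R(\omega_0+1,\tin(H)+1)$ for every graph $H$ with $\omega(H)\le\omega_0$. Fix $r$ and $G\in\C$, and take a $(1-1/r)$-general cover of $G$ with $\tin$-width at most $f(r)$. Each member $C$ of the cover induces a subgraph $G[C]$ with clique number at most $\omega_0$. So the same cover has $\tw$-width at most $f'(r):=R(\omega_0+1,f(r)+1)$. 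Thus $\C$ is fractionally $\tw$-fragile, which is the conclusion of \Cref{Dvorak}.

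There is no real obstacle here. The only point needing care is Step 2: we must use heredity to conclude that the clique number is bounded, since otherwise the Ramsey conversion in Step 3 gives no uniform bound on the fragility function.
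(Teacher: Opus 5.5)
Your proof is correct, but it converts $\tin$-width to $\tw$-width differently from the paper. Your Step~1 matches the paper. After that, the paper uses \Cref{degeneracy-from-sep} (which rests on the edge-density result \cite[Lemma~12]{DN16}) to get that every graph in $\mathcal C$ is $d$-degenerate, hence $(d+1)$-colorable. So every bag $B$ satisfies $|B|\le (d+1)\alpha(G[B])$, and the same cover has $\tw$-width at most $(d+1)f(r)-1$.

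You need only bounded clique number. You get it from the trivial bound $\mathsf{sep}(K_q)\ge q/3$ together with heredity, and then apply Ramsey's theorem. Your route is more elementary and self-contained: it avoids the nontrivial density lemma entirely, and your Step~3 works for any class of bounded clique number.

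The price is a weaker fragility function. Your bound $R(\omega_0+1,f(r)+1)\le\binom{\omega_0+f(r)}{\omega_0}$ is a polynomial in $f(r)$ of degree $\omega_0$, whereas the paper's conversion is linear in $f(r)$. Both arguments preserve polynomial fragility rates, but the paper's transfers the rate essentially unchanged, up to the factor $d+1$.
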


\begin{proof}
Let $\mathcal C$ be a hereditary class admitting strongly sublinear separators. In particular, $\mathcal C$ admits strongly sublinear $\alpha$-separators. Assume now that \Cref{alphaDvorak} holds. Then $\mathcal C$ is fractionally $\tin$-fragile, for some fragility function $f$. Moreover, there exists an integer $d$ such that every graph in $\mathcal C$ is $d$-degenerate \cite{DN16} (see also \Cref{degeneracy-from-sep}). 

Fix now $r\in\mathbb N$. For every $G\in\mathcal C$, let $\mathcal A$ be a $(1-1/r)$-general cover of $G$ of $\tin$-width at most $f(r)$. For each $A\in\mathcal A$, the graph $G[A]$ is $d$-degenerate and hence $(d+1)$-colorable. Hence every bag $B$ of a tree decomposition of $G[A]$ satisfies $|B|\le (d+1)\alpha(G[B])$. Choosing a tree decomposition of $G[A]$ with independence number at most $f(r)$ gives $
\tw(G[A])\le (d+1)f(r)-1$. Thus the same cover $\mathcal A$ has $\tw$-width at most $(d+1)f(r)-1$, and $\mathcal C$ is fractionally $\tw$-fragile.
\end{proof}

Another indication that \Cref{alphaDvorak} might be challenging is that an efficient version of it, together with results from \cite{dBKMT23,GMY24}, would immediately imply that \textsc{Max-Weight Independent Set} admits a PTAS on pseudo-disk graphs. Finding such a PTAS is a well-known open problem \cite{CHP12,CAPP25}.

In \Cref{sec:layered-tree-alpha}, we consider a natural question suggested by \cite[Conjecture~8.5]{DMS24a} and \Cref{equivalence} (see also \Cref{reldiagram}): If every bounded-clique slice of a hereditary class has bounded layered treewidth, does the whole class have bounded layered tree-independence number? We show that an easy adaptation of the constructions from \cite{CT25} provides a negative answer. Hence, a ``layered variant'' of \cite[Conjecture~8.5]{DMS24a} cannot hold either. 

Finally, in \Cref{sec:counter}, we show that there exist monotone classes admitting sublinear separators which are not $\chi$-bounded. We obtain this by modifying a construction of \citet{MS18} of graphs with arbitrarily large average degree and girth and whose $t$-vertex
subgraphs have balanced separators of size $o(t)$.  
This answers in a strong form a question of \citet{DMMY25} asking whether every graph class admitting sublinear-weight clique-based separators has bounded $\alpha$-degeneracy. 

\subsection{Overview of the proofs of \Cref{equivalence} and \Cref{algo}}\label{overview}
We first consider \Cref{equivalence}. Here, we only sketch the proofs of the main implications \textup{(ii)}$\Rightarrow$\textup{(iv)}$\Rightarrow$\textup{(v)}. Then we pass to \Cref{algo}.

Suppose that every bounded-clique slice of the hereditary class $\C$ has strongly sublinear separators. In \textup{(ii)}$\Rightarrow$\textup{(iv)}, we need to replace the a priori arbitrary dependence of both the coefficient and the exponent on the clique number by a bound of the form
\[
   \mathsf{sep}(G) \le K\omega(G)^s |V(G)|^{1-\beta}, \tag{$\ast$}
\]
where $K,s$ and $\beta>0$ are independent of $G$. This uniformization result is the technically difficult part of the proof of \Cref{equivalence} and is the content of \Cref{poly-coefficient}. We proceed as follows.

Strongly sublinear separators on each fixed-clique slice imply bounded degeneracy on the slice, and hence $\C$ is $(\dg,\omega)$-bounded. By the polynomialization theorem for $(\dg,\omega)$-bounded classes \cite{DMcC25,GH25}, the degeneracy can therefore be bounded by a fixed polynomial in $\omega$. 

We then prove an induced sparsification lemma (\Cref{induced-sparsification}). Roughly speaking, if a $d$-degenerate graph contains a sufficiently large $r$-shallow clique minor, then it contains an induced $K_7$-free subgraph of controlled order and large treewidth. The proof of \Cref{induced-sparsification} organizes many branch sets of the clique model into blocks indexed by the vertices of a cubic expander and, for every expander edge, fixes connections between all possible pairs of branch sets in the corresponding blocks. We can therefore select one branch set from each block independently at random while preserving the expander as a minor. We then prove that the at most three attachment points arising in each selected branch set can be joined by a small induced connector of clique number at most three. It follows that any $K_7$ in the resulting induced subgraph must meet at least three independently selected branch sets, and hence survives the random choice only with small probability. Finally, bounded degeneracy gives a sufficiently strong bound on the number of potential $K_7$'s, allowing us to eliminate all surviving ones while preserving large treewidth.

Since $\C$ is hereditary, the resulting $K_7$-free graph belongs to $\C_{\le6}$.  The strongly sublinear separator bound on this single slice therefore gives an upper bound on its treewidth, and comparing the two treewidth bounds yields $\omega_r(G)\le p(\omega(G))(r+1)^\delta$
for a fixed polynomial $p$ and a fixed exponent $\delta$ (see \Cref{shallow-clique}). The Plotkin--Rao--Smith separator theorem \cite{PRS94} (see \Cref{PRS}) then converts this polynomial shallow-clique bound into \((\ast)\). Note that the crucial outcome is not just that $\beta>0$ can be chosen uniformly in the clique number, but also that the remaining dependence on the clique number can be made polynomial.

In \textup{(iv)}$\Rightarrow$\textup{(v)}, we use \((\ast)\) to obtain a clique-based separator of strongly sublinear size. The argument is inspired by \cite[Proposition~5.4]{DMMY25}. Let $G\in\C$ be a graph on $n$ vertices, and set $\gamma:=\beta/(s+1)$, and $q:=\lceil n^\gamma\rceil$. Repeatedly remove a clique of size at least $q+1$ until the remaining induced subgraph $R$ has clique number at most $q$.  Since the removed cliques are pairwise disjoint, there are at most $n/(q+1)=O(n^{1-\gamma})$ of them. On the other hand, applying \((\ast)\) to $R$ gives a balanced separator $S$ with $|S| \le Kq^s n^{1-\beta} = O(n^{1-\gamma})$.
The removed cliques together with the singleton cliques $\{\{v\}:v\in S\}$ therefore form a clique-based separator of $G$ consisting of $O(n^{1-\gamma})$ cliques. 

We conclude with \Cref{algo}. Starting from strongly sublinear tree-independence number, \Cref{equivalence} gives strongly sublinear $\alpha$-separators, which we then upgrade to polynomial $(\dg,\omega)$-boundedness (\Cref{alpha-separators-poly-dgomega}). Thus, there exist constants $A\ge1$ and $s\ge0$ such that $\dg(H)+1\le A\omega(H)^s$ for every $H\in\mathcal C$. Combining this with the elementary inequality $\tw(H)+1\le\chi(H)\tin(H)\le(\dg(H)+1)\tin(H)$ yields polynomial control of the treewidth in terms of the clique number and the tree-independence number. We then apply the clique-peeling argument from the previous paragraph: If $\tin(H)\le a|V(H)|^{1-\delta}$ and $\gamma:=\delta/(s+1)$, then after peeling cliques of size about $n^\gamma$, the remainder $R$ satisfies $\tw(R)=O(n^{1-\gamma})
$. The single-exponential treewidth approximation algorithm of \citet{Kor23} then constructs a small-width decomposition of $R$ in subexponential time. Adding the peeled vertices to every bag yields a tree decomposition of the original graph with strongly sublinear independence number.

\clearpage
\begin{figure}[p]
\centering
\begin{tikzpicture}[
  x=.82cm,y=.82cm,
  >=Stealth,
  prop/.style={rectangle,draw=sandborder,very thick,fill=sandbox,
               rounded corners=.7pt,align=center,
               inner xsep=2.8pt,inner ysep=2.6pt,
               font=\fontsize{7.2}{7.8}\selectfont},
  wideprop/.style={prop},
  treeprop/.style={prop},
  eqprop/.style={rectangle,draw=blueborder,very thick,fill=bluebox,
                 rounded corners=.7pt,align=center,
                 inner xsep=2.8pt,inner ysep=2.6pt,
                 font=\fontsize{7.2}{7.8}\selectfont},
  known/.style={-{Stealth[length=3.2pt,width=4.2pt]},line width=1.1pt,draw=sandborder},
  open/.style={-{Stealth[length=3.3pt,width=4.4pt]},dashed,
               line width=1.1pt,red!75!black,
               preaction={draw=white,line width=3.5pt,-}},
  blueimp/.style={-{Stealth[length=3.3pt,width=4.4pt]},
                  line width=1.1pt,blue!60!black,
                  preaction={draw=white,line width=3.5pt,-}},
  eqlab/.style={font=\scriptsize,blue!60!black},
  redlab/.style={font=\scriptsize,text=red!75!black,fill=white,inner sep=1.1pt}
]

\node[prop] (bdegree)  at (0.55,0.45)  {\textsf{bounded}\\[-1pt]\textsf{degree}};
\node[prop] (exp)      at (0.55,12.05) {\textsf{bounded}\\[-1pt]\textsf{expansion}};
\node[prop] (degn)     at (0.55,14.25) {\textsf{bounded}\\[-1pt]\textsf{degeneracy}};
\node[prop] (tdegn)    at (0.55,16.45) {\textsf{bounded}\\[-1pt]\textsf{$\theta$-degeneracy}};
\node[prop] (adegn)    at (0.55,18.65) {\textsf{bounded}\\[-1pt]\textsf{$\alpha$-degeneracy}};
\node[prop] (dgnomega) at (0.55,20.85) {\textsf{poly}\\[-1pt]\textsf{$(\mathsf{dgn},\omega)$-bounded}};
\node[prop] (chibound) at (0.55,23.05) {\textsf{poly}\\[-1pt]\textsf{$\chi$-bounded}};

\node[wideprop] (layeredtw) at (3.35,3.40)
  {\textsf{bounded}\\[-1pt]\textsf{layered $\mathsf{tw}$}};
\node[wideprop] (polyftf) at (3.35,6.05)
  {\textsf{fractionally}\\[-1pt]\textsf{$\mathsf{tw}$-fragile}\\[-1pt]\textsf{poly-rate}};
\node[wideprop] (ftf) at (3.35,10.35)
  {\textsf{fractionally}\\[-1pt]\textsf{$\mathsf{tw}$-fragile}};

\node[wideprop] (ssep) at (6.20,8.15)
  {\textsf{strongly sublinear}\\[-1pt]\textsf{separators}};

\node[treeprop] (tw) at (17.60,0.45)
  {\textsf{bounded}\\[-1pt]\textsf{$\mathsf{tw}$}};
\node[treeprop] (tin) at (17.60,2.8)
  {\textsf{bounded}\\[-1pt]\textsf{$\mathsf{tree}\textnormal{-}\alpha$}};
\node[treeprop] (layeredtin) at (17.60,5.15)
  {\textsf{bounded}\\[-1pt]\textsf{layered $\mathsf{tree}\textnormal{-}\alpha$}};
\node[treeprop] (polyftaf) at (17.60,11.35)
  {\textsf{fractionally}\\[-1pt]\textsf{$\mathsf{tree}\textnormal{-}\alpha$-fragile}\\[-1pt]
   \textsf{poly-rate}};
\node[treeprop] (ftaf) at (17.60,16.70)
  {\textsf{fractionally}\\[-1pt]\textsf{$\mathsf{tree}\textnormal{-}\alpha$-fragile}};
\node[treeprop] (cbsep) at (17.60,20.5)
  {\textsf{sublinear-weight}\\[-1pt]\textsf{clique-based separators}};

\def\blueY{14.025}
\def\blueGap{0.42}
\node[eqprop,anchor=west] (cppe) at (3.9,\blueY)
  {\textsf{clique-dep.}\\[-1pt]\textsf{poly}\\[-1pt]\textsf{expansion}};
\node[eqprop,anchor=west] (sepomega) at ($ (cppe.east)+(\blueGap,0) $)
  {\textsf{strongly}\\[-1pt]\textsf{sublinear}\\[-1pt]
   $(\mathsf{sep},\omega)$\textsf{-}\\[-1pt]\textsf{bounded}};
\node[eqprop,anchor=west] (twomega) at ($ (sepomega.east)+(\blueGap,0) $)
  {\textsf{strongly}\\[-1pt]\textsf{sublinear}\\[-1pt]
   $(\mathsf{tw},\omega)$\textsf{-}\\[-1pt]\textsf{bounded}};
\node[eqprop,anchor=west] (cbsize) at ($ (twomega.east)+(\blueGap,0) $)
  {\textsf{strongly}\\[-1pt]\textsf{sublinear}\\[-1pt]
   \textsf{clique-based}\\[-1pt]\textsf{separators}};
\node[eqprop,anchor=west] (stinalpha) at ($ (cbsize.east)+(\blueGap,0) $)
  {\textsf{strongly}\\[-1pt]\textsf{sublinear}\\[-1pt]
   \textsf{$\mathsf{tree}\textnormal{-}\alpha$}};
\node[eqprop,anchor=west] (sasep) at ($ (stinalpha.east)+(\blueGap,0) $)
  {\textsf{strongly}\\[-1pt]\textsf{sublinear}\\[-1pt]
   $\alpha$\textsf{-}\textsf{separators}};

\foreach \a/\b in {cppe/sepomega,sepomega/twomega,twomega/cbsize,
                    cbsize/stinalpha,stinalpha/sasep}{
  \node[eqlab] at ($ (\a.east)!0.5!(\b.west) $) {$\Leftrightarrow$};
}

\draw[known] (bdegree) -- (exp);
\draw[known] (exp) -- (degn);
\draw[known] (degn) -- (tdegn);
\draw[known] (tdegn) -- (adegn);
\draw[known] (adegn) -- node[pos=.5,left,font=\tiny] {\cite{DMMY25}} (dgnomega);
\draw[known] (dgnomega) -- (chibound);
\draw[known,blueborder] (sasep.north west) -- node[pos=.3,left,xshift=-3mm,font=\tiny] {\cref{alpha-separators-poly-dgomega}} (dgnomega.south east);

\draw[known,blueborder] (ssep) .. controls (6,11) .. node[pos=.9,above,font=\tiny] {\cite{DN16}} (exp);
\draw[known] (polyftf.east) -- (polyftaf.south west);
\draw[known] (tw) -- (tin);
\draw[known] (tw.north west) -- (layeredtw.south east);
\draw[known] (tin) -- (layeredtin);
\draw[known] (layeredtw.north east) -- (layeredtin.south west);
\draw[known] (layeredtw) -- (polyftf);
\draw[known] (polyftf) -- (ftf);
\draw[known] (polyftf.north east) -- (ssep.south west);
\draw[known] (layeredtin) -- node[pos=.5,right,font=\tiny] {\cite{GMY23}} (polyftaf);

\draw[known] (ftf.north west) -- node[pos=.2,left,xshift=-2mm,font=\tiny] {\cite{Dvo16}}  (exp.south east);
\draw[known] (ftf.north east)
  .. controls (16.5,12) .. ($(ftaf.south)+(-1.5mm,0)$);
\draw[known] (polyftaf) -- (ftaf);
\draw[known] (ftaf) -- node[midway,right,font=\tiny] {\cref{fractional-treealpha-sublinear-clique-weight}} (cbsep);
\draw[known] (polyftaf.north west) -- node[pos=.15,left,font=\tiny] {\cref{poly-fragility-alpha-separators}} (sasep.south east);
\draw[known] (ssep.north) -- (sepomega.south);
\draw[known] (cbsize.north east) -- (cbsep.south west);
\draw[known] (ftaf.north west) -- node[midway,above,font=\tiny] {\cite{DMMY25}} (dgnomega.east);

\draw[known] (layeredtin.west)
  .. controls (2.5,13) .. node[pos=.08,below left,font=\tiny] {\cite{DMMY25}} (adegn.south east);

\draw[open] (ssep.north west) -- node[midway,right,font=\tiny] {\cite{Dvo16}} (ftf.south east);
\draw[open] (ftaf.north west) -- node[pos=.8,below,font=\tiny] {\cite{DMMY25}} (adegn.east);
\draw[open] (sasep.north east) -- node[midway,left,font=\tiny] {\cref{alphaDvorak}}
  (ftaf.south west);
\end{tikzpicture}
\caption{Relationships between the class properties appearing in the paper. The properties displayed in blue are equivalent for hereditary graph classes thanks to \Cref{equivalence}. Solid arrows represent established implications, whereas dashed red arrows are conjectural. A blue arrow means that the corresponding implication holds in the hereditary case. We reference only the implications not directly following from the definition. The diagram refines \cite[Figure~1]{DMMY25}.}
\label{reldiagram}
\end{figure}
\clearpage

\section{Preliminaries}\label{sec:prelim}

The set of positive integers is denoted by $\mathbb{N}$ and we let $\mathbb{N}_0 := \mathbb{N} \cup \{0\}$. For $k \in \N$, we let $[k]:=\{1,\ldots,k\}$. The base-$2$ logarithm is denoted by $\log$. All graphs in the paper are finite and simple. Let $G$ be a (nonempty) graph and let $v\in V(G)$. 

The neighborhood of $v$ in $G$ is denoted by $N_G(v)$, and the degree of $v$ in $G$ by $d_G(v)$. Let $\delta(G)$ and $\Delta(G)$ denote the
minimum and maximum degree of $G$, respectively. The independence number of $G$ is denoted by $\alpha(G)$, the clique number of $G$ is denoted by $\omega(G)$, the chromatic number of $G$ is denoted by $\chi(G)$, and the clique cover number of $G$ is denoted by $\theta(G)$. For $X\subseteq V(G)$, the
notation $G[X]$ denotes the subgraph of $G$ induced by $X$. With $\overline G$ we denote the complement of $G$.

The \emph{average degree} of $G$, denoted $\ad(G)$, is the quantity $2|E(G)|/|V(G)|$. The \emph{degeneracy} of $G$,  denoted $\dg(G)$, is the maximum, over all nonnull induced subgraphs $H$ of $G$, of the minimum degree of $H$.
The \emph{$\alpha$-degeneracy} of $G$ is the maximum, over all nonnull induced subgraphs $H$ of $G$, of the minimum, over all vertices $v\in V(H)$, of $\alpha(H[N_H(v)])$.
Equivalently, the $\alpha$-degeneracy of $G$ is the smallest integer $k$ such that every nonnull induced subgraph of $G$ has a vertex that is not the center of an induced star with more than $k$ leaves. This parameter was dubbed \emph{inductive independence number} by \citet{YB12}. The \textit{$\theta$-degeneracy} of $G$ is defined similarly to the $\alpha$-degeneracy of $G$, except that the independence number is replaced by the clique cover number. 

A graph class $\C$ is \emph{hereditary}
if it is closed under taking induced subgraphs, and \emph{monotone} if it is closed under taking subgraphs. A graph class $\mathcal{G}$ is said to be \emph{$\chi$-bounded} if it admits a \emph{$\chi$-binding function}, that is, a function $f$ such that for every graph $G$ in the class and every induced subgraph $H$ of $G$, it holds that $\chi(H)\le f(\omega(H))$ (see~\cite{Gyarfas87,SS20}).
Furthermore, if there exists a polynomial function with this property, then the class is said to be \emph{polynomially $\chi$-bounded}.    

A restricted version of $\chi$-boundedness is the following.
We say that a graph class $\mathcal{G}$ is \emph{$(\dg,\omega)$-bounded} 
(resp., \emph{polynomially $(\dg,\omega)$-bounded}) if there exists a function $f$ (resp., a polynomial function) such that for every graph $G$ in the class and every induced subgraph $H$ of $G$, it holds that $\dg(H)\le f(\omega(H))$.
While not every $\chi$-bounded graph class is polynomially $\chi$-bounded \cite{BDW24}, the following consequence of \cite{GH25} and \cite[Corollary~3.5]{DMcC25} holds.  

\begin{theorem}\label{poly-dgnomega}
Every $(\dg,\omega)$-bounded graph class is polynomially $(\dg,\omega)$-bounded.     
\end{theorem}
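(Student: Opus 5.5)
This statement is imported rather than proved from scratch: it combines the theorem of Girão and Hunter \cite{GH25} with \cite[Corollary~3.5]{DMcC25}. The plan is to show that the two results chain together exactly as needed.

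The first ingredient is \cite{GH25}. For every fixed $d$ there is a polynomial (in fact essentially a constant power) $h_d$ such that any graph with average degree at least $h_d(t)$ contains either $K_{t,t}$ as a subgraph or an induced subgraph of minimum degree greater than $d$ that has no $4$-cycle. This bipartite-or-induced-$C_4$-free dichotomy is the polynomial bound on degree-boundedness for classes excluding a fixed $C_4$-free sparse structure. Put differently, if a hereditary class is $(\dg,\omega)$-bounded, then for each fixed $t$ the $K_{t,t}$-subgraph-free graphs in it have degeneracy polynomially bounded in $t$. Here the fixed parameter is the degeneracy of the $\omega\le 2$ slice (triangle-free graphs have degeneracy at most $f(2)$).

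The second ingredient is \cite[Corollary~3.5]{DMcC25}, which converts a polynomial bound in terms of a forbidden biclique $K_{t,t}$ into a polynomial bound in terms of $\omega$. The key Ramsey step is this. Suppose $G$ has clique number $\omega$ and contains $K_{t,t}$ with $t$ large. On each side, either some $\omega+1$ vertices induce a clique, which is impossible, or an independent set of size roughly $t^{1/\omega}$ appears. That is exponential, so a naive argument is insufficient. Instead, \cite{DMcC25} uses the triangle-free slice bound $f(2)$ to show that any $K_{t,t}$ with $t \ge \omega^{C}$ forces an induced subgraph with $\omega\le 2$ and minimum degree larger than $f(2)$, a contradiction. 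This is done by finding large induced bicliques via a polynomial Ramsey bound for graphs whose small-clique slices are sparse. So $K_{t,t}$ is excluded as a subgraph with $t$ polynomial in $\omega$, and substituting into the \cite{GH25} bound gives $\dg(H)\le h(\omega(H)^{C})$, which is polynomial.

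Concretely, I would take an arbitrary $H$ in the class, set $t:=\omega(H)^C$, argue that $H$ is $K_{t,t}$-subgraph-free, and apply the polynomial degree-boundedness bound. The main obstacle is the second step, getting $t$ polynomial in $\omega$ rather than exponential. This is exactly the content of \cite[Corollary~3.5]{DMcC25}, so I would invoke it directly rather than reprove it.
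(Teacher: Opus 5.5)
Your plan matches the paper: the statement is not proved there but quoted as a consequence of \cite{GH25} (polynomial degree-boundedness) and \cite[Corollary~3.5]{DMcC25}, and chaining them as you do is correct. Two points in your sketch are off, though neither breaks the chain.

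First, the Ramsey step you dismiss as exponential is in fact sufficient on its own. The graph $K_{s,s}$ with $s:=f(2)+1$ has clique number $2$ and degeneracy $s>f(2)$, so it is not an induced subgraph of any graph in the class. Now suppose $G$ has clique number $\omega$ and contains $K_{t,t}$ as a subgraph with $t\ge R(\omega+1,s)$. Then each side contains an independent $s$-set, and together these two sets induce $K_{s,s}$, a contradiction. Since $s$ is fixed, $R(\omega+1,s)\le\binom{\omega+s-1}{s-1}=O(\omega^{s-1})$ is polynomial in $\omega$. Your estimate $t^{1/\omega}$ describes the regime where the required independent set grows with $t$, which is not the relevant one here.

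Second, a $C_4$-free graph can contain triangles; only $K_4$ is excluded. So in the first step the constant to beat is $f(3)$ rather than $f(2)$, taking $f$ nondecreasing.
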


Let $G$ and $H$ be graphs and let $r \geq 0$ be an integer. $H$ is a \emph{minor} of $G$ if a graph isomorphic to $H$ can be obtained from $G$ by vertex deletion, edge deletion, and edge contraction. A \emph{model} of $H$ in $G$ is a family $\{B_v\}_{v\in V(H)}$ of pairwise disjoint vertex subsets of $V(G)$, called \emph{branch sets}, such that each $G[B_v]$ is connected and, for every edge $uv\in E(H)$, there is an edge of $G$ with one endpoint in $B_u$ and the other in $B_v$. It is well known that $H$ is a minor of $G$ if and only if $G$ contains a model of $H$. If there exists a model $\{B_v\}_{v\in V(H)}$ of $H$ in $G$ such that each $G[B_v]$ has radius at most $r$, then such a model is an \emph{$r$-shallow minor model} of $H$ in $G$ and $H$ is an
\emph{$r$-shallow minor} of $G$. Recall here that the \emph{radius} of $G$ is the quantity $\min_{x\in V(G)}\max_{y\in V(G)}d_G(x,y)$. We let $\omegar(G):=\max\{t:K_t\text{ is an $r$-shallow minor of }G\}$.

For a graph $G$ and an integer $r\ge0$, let $\nabla_r(G)$ denote the maximum of $|E(H)|/|V(H)|$ over all nonempty $r$-shallow minors $H$ of $G$, with $\nabla_r(G):=0$ when $G$ is empty. A class $\mathcal C$ has \emph{bounded expansion} if there exists a function
$f\colon\mathbb N_0\to\mathbb R_{\ge0}$ such that
$\nabla_r(G)\le f(r)$ for every $G\in\mathcal C$ and
$r\in\mathbb N_0$; it has \emph{polynomial expansion} if such a function $f$ can be chosen to be a polynomial.

A \textit{tree decomposition} of a graph $G$ is a pair $\mathcal{T} = (T, \{X_t\}_{t\in V(T)})$, where $T$ is a tree whose every node $t$ is assigned a vertex subset $X_t \subseteq V(G)$, called \textit{bag}, such that the following conditions are satisfied: 
\begin{description}
\item[(T1)] Every vertex of $G$ belongs to at least one bag; 
\item[(T2)] For every $uv \in E(G)$, there exists a bag containing both $u$ and $v$; 
\item[(T3)] For every $u \in V(G)$, the subgraph $T_u$ of $T$ induced by $\{t \in V(T)\colon u \in X_t\}$ is connected. 
\end{description}
The \textit{width} of $\mathcal{T} = (T, \{X_t\}_{t\in V(T)})$ is the maximum value of $|X_t| - 1$ over all $t \in V(T)$. The \textit{treewidth} of a graph $G$, denoted $\tw(G)$, is the minimum width of a tree decomposition of $G$. The \textit{independence number} of $\mathcal{T}$, denoted $\alpha(\mathcal{T})$, is the quantity $\max_{t\in V(T)} \alpha(G[X_t])$. The \textit{tree-independence number} of a graph $G$, denoted $\tin(G)$, is the minimum independence number of a tree decomposition of $G$ (see \cite{DMS24}). For a tree decomposition $\mathcal{T} = (T, \{X_t\}_{t\in V(T)})$, we let $|\mathcal{T}| = |V(T)|$. We will use the following standard property of tree decompositions (see \cite[Lemma~7.19]{CFK}). 

\begin{lemma}[Folklore]\label{balanced-bag}
Let $G$ be a nonempty $n$-vertex graph and let $\mathcal T=(T,\{X_t\}_{t\in V(T)})$ be a tree decomposition of $G$. There exists $t_0\in V(T)$ such that every connected component of $G-X_{t_0}$ has at most $n/2$ vertices.
\end{lemma}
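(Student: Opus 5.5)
The plan is the standard ``walk towards the heavy component'' argument. For a node $t\in V(T)$, let $T^t$ be the forest $T-t$. Each connected component $C$ of $G-X_t$ is associated with a unique component of $T^t$, defined as follows. By (T1) and (T3), every vertex $v\in V(C)$ has $T_v$ nonempty and connected. Since $v\notin X_t$, the tree $T_v$ avoids $t$, so $T_v$ lies inside a single component of $T^t$. By (T2), adjacent vertices $u,v$ of $C$ share a bag, so $T_u$ and $T_v$ intersect and hence lie in the same component of $T^t$. Since $C$ is connected, all vertices of $C$ map into one component of $T^t$. Equivalently, each neighbor $t'$ of $t$ in $T$ determines a subtree $T_{t'}$ (the component of $T^t$ containing $t'$), and every component of $G-X_t$ lives entirely in some $T_{t'}$.

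Next, orient the edges. Call $t$ \emph{good} if every component of $G-X_t$ has at most $n/2$ vertices; otherwise there is a (unique, since its size exceeds $n/2$) big component $C_t$. In that case, orient the edge $tt'$ towards the neighbor $t'$ whose subtree $T_{t'}$ contains the vertices of $C_t$. Start at an arbitrary node and follow these orientations. Since $T$ is finite, either we reach a good node, which proves the lemma, or some edge $tt'$ is oriented both ways.

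The main step is to rule out such a $2$-cycle. Suppose $tt'$ is oriented $t\to t'$ and $t'\to t$. Then $C_t$ consists of vertices whose subtrees lie entirely in $T_{t'}$, the side of $t'$ after removing the edge $tt'$. Symmetrically, $C_{t'}$ lies on the side of $t$. These vertex sets are disjoint, since a vertex cannot have its subtree entirely on both sides, and each has more than $n/2$ vertices. This is a contradiction.

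I expect no real obstacle; the only care needed is in the correspondence between components of $G-X_t$ and components of $T-t$, which uses all three axioms (T1)--(T3). Nonemptiness of $G$ is used only to make $n/2$ meaningful, and if $T$ has a single node the statement is trivial.
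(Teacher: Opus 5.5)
Your proof is correct, and it follows a standard route, though not quite the one the paper points to. The paper gives no argument of its own here: it labels the lemma folklore and cites \cite[Lemma~7.19]{CFK}. The proof there (in weighted form) roots $T$, lets $V_t$ be the set of vertices of $G$ occurring in bags of the subtree rooted at $t$, and takes a node $t$ farthest from the root with $|V_t|\ge n/2$. Then each child $t'$ has $|V_{t'}|<n/2$, the part above has $|V(G)\setminus V_t|\le n/2$, and every component of $G-X_t$ lies inside one of these pieces.

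Your orientation argument is the unrooted form of the same idea. Both rest on the correspondence between components of $G-X_t$ and components of $T-t$, which you derive carefully from (T1)--(T3). The node produced by the rooted argument is a good node, i.e., a sink of your orientation. The rooted version gets existence directly from extremality and carries over verbatim to vertex weights. Yours needs the extra step excluding a doubly oriented edge, which you handle correctly via the disjointness of the two sides of $T-tt'$.

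Two minor remarks. First, the claim that following the orientation either reaches a good node or uses some edge in both directions relies on $T$ being a tree: a walk in a tree that never immediately backtracks is a path. A simpler justification is counting: if no node were good, then $|V(T)|$ nodes would each orient one of only $|V(T)|-1$ edges, so some edge would be oriented twice. Second, writing $T_{t'}$ for a component of $T-t$ clashes with the notation $T_v$ for $v\in V(G)$.
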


We also recall two properties of treewidth. First, treewidth is monotone under minors. Second, the following well-known result holds, where the lower bound is folklore and the upper bound is due to \citet{DN19}. Here the \emph{separation number} $\sepnum(G)$ of a graph $G$ is the smallest integer $s$ such that every subgraph of $G$ has a balanced separator of size at most $s$.

\begin{lemma}\label{DNsep} For every graph $G$, it holds that 
    $\mathsf{sep}(G)-1\leq \tw(G)\le 15\sepnum(G)$.
\end{lemma}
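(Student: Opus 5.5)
The statement to prove is \Cref{DNsep}: for every graph $G$, $\mathsf{sep}(G)-1\le\tw(G)\le 15\,\sepnum(G)$. The two inequalities are handled separately.

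\textbf{Lower bound.} Take an optimal tree decomposition $\mathcal T$ of $G$, of width $\tw(G)$, and apply \Cref{balanced-bag} to it. This gives a node $t_0$ such that every component of $G-X_{t_0}$ has at most $|V(G)|/2\le 2|V(G)|/3$ vertices. So $X_{t_0}$ is a balanced separator in our sense, and
\[
\mathsf{sep}(G)\le |X_{t_0}|\le \tw(G)+1 .
\]
If $G$ is empty there is nothing to prove, since we may take $S=\emptyset$.

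\textbf{Upper bound.} This is exactly the theorem of Dvořák and Norin~\cite{DN19}, so I would cite it rather than reprove it. First I would check that their definition of separation number matches ours: every subgraph has a balanced separator with components of size at most $2/3$ of the subgraph. If their normalization differs, for example a weighted or $1/2$-balanced version, I would note that the constant $15$ is stated for the $2/3$ version.

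If a self-contained argument were required, I would follow their outline.
\begin{enumerate}
\item \emph{Separators to tangles.} Assume $\tw(G)$ is large relative to $s=\sepnum(G)$. By treewidth duality, $G$ has a bramble, or equivalently a tangle, of large order $k$.
\item \emph{Tangles to weightings.} Use LP duality (the fractional Menger step) to turn the bramble into a fractional weighting of vertices. This weighting must be robust: removing any $s$ vertices cannot leave a component containing more than $2/3$ of the weight.
\item \emph{Derive a contradiction.} Compare this weighting with the balanced separators guaranteed in every subgraph. Apply the separators to a carefully chosen subgraph in which the weight is approximated by vertex counts, obtained by replicating or deleting vertices.
\end{enumerate}

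\textbf{Main obstacle.} The obstacle is step 3. A vertex-count balanced separator has to be converted into a weight-balanced one, and this is where the specific constant $15$ comes from. Since the result is well known, the proof in the paper should be a one-line citation plus the two-line argument for the lower bound via \Cref{balanced-bag}.
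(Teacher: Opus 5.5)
Your proposal is correct and follows the paper's route: the paper also treats the lower bound as the folklore consequence of \Cref{balanced-bag}, since the bag $X_{t_0}$ has at most $\tw(G)+1$ vertices and is a balanced separator, and it cites Dvo\v{r}\'ak and Norin~\cite{DN19} for the upper bound. The normalization you flag causes no trouble: if every component of $G-S$ has at most $2n/3$ vertices, the components can be split into two groups of at most $2n/3$ vertices each (conversely, the sides of such a separation bound every component by $2n/3$), so the paper's $\sepnum$ agrees with Dvo\v{r}\'ak--Norin's separation number and the constant $15$ transfers unchanged.
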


The following elementary observation will be repeatedly used.

\begin{observation}[Folklore]\label{subgraph-separator}
Let $\C$ be a hereditary class and let $G\in\C$. If every $n$-vertex graph in
$\C$ has a balanced separator of size at most $f(n)$, where $f$ is
nondecreasing, then every subgraph $H$ of $G$ has a balanced separator of
size at most $f(|V(H)|)$.
\end{observation}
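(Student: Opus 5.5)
The statement follows in two steps. Every subgraph $H$ of $G$ inherits the separator bound $f(|V(H)|)$ from the class via induced subgraphs and monotonicity of $f$. We then pass from the induced subgraph to the subgraph by a deletion argument.

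First, let $H$ be any subgraph of $G$ and set $H' := G[V(H)]$. Since $\C$ is hereditary and $G\in\C$, we have $H'\in\C$. Moreover, $|V(H')| = |V(H)|$. By assumption, $H'$ has a balanced separator $S$ with $|S|\le f(|V(H)|)$. (If $\mathsf{sep}$ is only bounded in terms of the exact order $n$, the hypothesis already matches $n=|V(H)|$. Nondecreasingness of $f$ is only needed for subgraphs whose induced closure could be larger, which is not the case here.)

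Second, $H$ is a spanning subgraph of $H'$, so $E(H)\subseteq E(H')$ and $V(H)=V(H')$. Hence $H-S$ is a spanning subgraph of $H'-S$. Deleting edges can only split connected components. Therefore every component of $H-S$ is contained in a component of $H'-S$ and has at most $2|V(H')|/3 = 2|V(H)|/3$ vertices. Thus $S$ is a balanced separator of $H$ of size at most $f(|V(H)|)$.

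There is no real obstacle. The only point requiring care is to observe that the induced subgraph on the same vertex set has the same order. This is why the bound is stated in terms of $|V(H)|$, and why the monotonicity of $f$ makes the statement robust if one prefers to phrase the hypothesis as "at most $n$ vertices".
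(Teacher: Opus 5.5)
Your proposal is correct and follows the paper's proof essentially verbatim: pass to $G[V(H)]\in\C$, take its separator $S$, and note that $H-S$ is a spanning subgraph of $G[V(H)]-S$, so its components are no larger. Your side remark is also right: monotonicity of $f$ is not needed for this statement itself, only where the paper later applies it to bound $f(|V(H)|)$ by $f(n)$.
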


\begin{proof}
Let $J:=G[V(H)]$. Since $\C$ is hereditary, $J\in\C$, and hence $J$ has a
balanced separator $S$ of size at most $f(|V(H)|)$. The graph $H-S$ is a
spanning subgraph of $J-S$, so every component of $H-S$ is contained in a
component of $J-S$. Thus $S$ is also a balanced separator of $H$.
\end{proof}

It is well known that hereditarily admitting strongly sublinear separators implies bounded average degree and hence bounded degeneracy. We record this as follows and provide the short proof for completeness.

\begin{lemma}[\citet{DN16}]\label{degeneracy-from-sep}
Let $\C$ be a hereditary class. Suppose there exist constants $c\ge1$ and $\varepsilon\in(0,1]$
such that every $n$-vertex graph in $\C$ has a balanced separator of size at
most $cn^{1-\varepsilon}$. Then there is an integer $d_{c,\varepsilon}$
such that every graph in $\C$ is $d_{c,\varepsilon}$-degenerate.
\end{lemma}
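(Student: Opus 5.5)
The plan is to bound the average degree of every graph in $\C$ by a constant depending only on $c$ and $\varepsilon$. Since $\C$ is hereditary, every induced subgraph of a graph in $\C$ is again in $\C$, so it then has a vertex of degree at most that constant, which is exactly $d_{c,\varepsilon}$-degeneracy. The average degree will be bounded by a recursive separator argument, i.e.\ by counting edges through repeated balanced splits.

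\textbf{Setup.} Fix $G\in\C$ with $n$ vertices. By \Cref{subgraph-separator} applied with the nondecreasing function $f(n)=cn^{1-\varepsilon}$, every subgraph $H$ of $G$ has a balanced separator of size at most $c|V(H)|^{1-\varepsilon}$. Let $E(n)$ be the maximum number of edges of a graph on at most $n$ vertices that hereditarily has such separators.

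\textbf{Recursion.} Take a balanced separator $S$ of $G$ with $|S|\le cn^{1-\varepsilon}$. The edges incident to $S$ number at most $|S|\,n\le cn^{2-\varepsilon}$, which is too weak. I will instead split differently: delete the at most $|S|$ vertices of $S$ and charge their edges via degrees.

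To make this charging work, first pass to a subgraph of large minimum degree. If $G$ has average degree $2d$, it contains a subgraph $H$ with minimum degree at least $d$. Every subgraph of $H$ still has small separators. Now order the components of $H-S$ and group them into two parts $A,B$, each with at most $2|V(H)|/3$ vertices. This gives a separation $(A\cup S,B\cup S)$ in which both sides have at most $\tfrac23 m+cm^{1-\varepsilon}$ vertices, where $m=|V(H)|$.

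\textbf{Iteration.} Iterate this splitting on each side. After $O(\log m)$ levels the pieces have constant size. The total number of separator vertices used is $\sum_i (3/2)^{i\varepsilon}c\,m^{1-\varepsilon}\cdot O(1)$, a geometric sum dominated by its last level. Choosing instead to stop at pieces of size $t$, the total separator size is $O(c\,m\,t^{-\varepsilon})$, which is at most $m/2$ for $t=t(c,\varepsilon)$ a large constant. So at least $m/2$ vertices lie in pieces of size at most $t$ and outside all separators. Each such vertex has all its neighbours inside its own piece or in the separators adjacent to that piece.

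\textbf{Main obstacle.} The hard step is turning this into a contradiction with minimum degree $d$. The neighbours of a vertex outside all separators lie in its final piece, which includes the boundary separator vertices, and that piece has size at most $t$. Hence its degree is less than $t$. This gives a contradiction once $d\ge t$, provided the recursion is set up so that each final piece together with its boundary has size at most $t$. Since in a piece the boundary belongs to the piece (sides are $A\cup S$), this holds automatically.

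Therefore $d<t$, and $\dg(G)\le 2t=:d_{c,\varepsilon}$. The care needed is in the summation bound $O(cmt^{-\varepsilon})$, which uses that piece sizes shrink geometrically with the additive $cm^{1-\varepsilon}$ terms absorbed for $t$ large.
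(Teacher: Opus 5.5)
Your argument is correct in outline, but it takes a different route from the paper. The paper does no real work here. It applies \Cref{subgraph-separator} and then cites \cite[Lemma~12]{DN16} for the edge bound $|E(H)|\le a_{c,\varepsilon}|V(H)|$; bounded average degree then gives bounded degeneracy at once.

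You instead reprove the content of that lemma by a recursive, $r$-division-style decomposition. You take a subgraph $H$ of minimum degree at least $d$ and split it recursively, keeping each separator on both sides, until pieces have at most $t$ vertices. A vertex lying in no separator then has its whole neighbourhood inside its leaf piece, so its degree is less than $t$. This confinement step is easy: if $v\in P\setminus S_P$ lies in $A_P$ and $N_H(v)\subseteq P$, then $N_H(v)\subseteq A_P\cup S_P$, and you induct down the recursion. So it is not the real obstacle.

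Your route gives a self-contained proof with an explicit constant $t(c,\varepsilon)$. You could also skip the average-degree detour by taking $H$ induced with $\delta(H)=\dg(G)$, which gives $\dg(G)\le t-1$. The paper's route gives brevity.

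The step that actually carries the weight is the bound $O_\varepsilon(c\,m\,t^{-\varepsilon})$ on the total size of all separators. Your ``$(3/2)^i$ pieces of size $(2/3)^i m$ at level $i$'' is only a heuristic, because the recursion tree is not uniform. The naive per-level estimate, using only that each split piece has more than $t$ vertices, gives $O(c\,m\,t^{-\varepsilon}\log(m/t))$. That is useless, since $t$ must not depend on $m$.

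Here is a rigorous fix. When a piece of size $n>t$ is split by $S$ into $A\cup S$ and $B\cup S$, the child sizes satisfy $n_1+n_2=n+|S|$ and $n_1,n_2\ge n/3$ (e.g.\ $|B\cup S|=n-|A|\ge n/3$). Set $\kappa:=(1/3)^{1-\varepsilon}+(2/3)^{1-\varepsilon}-1>0$. Concavity gives $n_1^{1-\varepsilon}+n_2^{1-\varepsilon}\ge(1+\kappa)n^{1-\varepsilon}$. Let $\beta(n)$ be the total separator size of the recursion on an $n$-vertex piece. Induction on $n$ shows
\[
  \beta(n)\le a\,n\,t^{-\varepsilon}-b\,n^{1-\varepsilon}\qquad\text{for all } n>t/3,
\]
where $a:=2c\,3^{\varepsilon}/\kappa$, $b:=a\,3^{-\varepsilon}$ and $t^{\varepsilon}\ge2a$.
\begin{itemize}
\item The base case $t/3<n\le t$ holds since $\beta(n)=0$ and $b\le a(n/t)^{\varepsilon}$.
\item The inductive step reduces to $c(1+a\,t^{-\varepsilon})\le b\kappa=2c$.
\item The same choice gives $t^{\varepsilon}>3c$, so $|S|<n/3$, both children are proper subsets, and the recursion terminates.
\end{itemize}
Hence $\beta(m)\le m/2$, and the rest of your argument goes through.
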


\begin{proof}
Fix $G\in\C$ and a subgraph $H$ of $G$. By
\cref{subgraph-separator}, every subgraph $J$ of $H$ has a balanced
separator of size at most $c|V(J)|^{1-\varepsilon}$. By \cite[Lemma~12]{DN16}, there exists a constant $a_{c,\varepsilon}$, independent of $H$, such that $|E(H)|\le a_{c,\varepsilon}|V(H)|$.
Thus every nonempty subgraph $H$ of $G$ has average degree at most $2a_{c,\varepsilon}$ and hence contains a vertex of degree at most $2a_{c,\varepsilon}$. 
\end{proof}

A \emph{layering} of a graph $G$ is an ordered partition
$(V_i)_{i\in\mathbb Z}$ of $V(G)$ such that, for every edge $uv$ with
$u\in V_i$ and $v\in V_j$, we have $|i-j|\le1$.
Let $\mathcal{T} = (T,\{X_t\}_{t \in V(T)})$ be a tree decomposition of a graph $G$.
The \emph{layered width} of $\mathcal{T}$ is the minimum integer $\ell$ such that there exists a layering $(V_i)_{i\in\mathbb Z}$ of $G$ such that for each bag $X_t$ and each layer $V_i$, it holds that $|X_t\cap V_i|\le \ell$.
Similarly, the \emph{layered independence number} of $\mathcal{T}$ is the minimum integer $\ell$ such that there exists a layering $(V_i)_{i\in\mathbb Z}$ of $G$ such that for each $t\in V(T)$ and each $i\in\mathbb{Z}$, it holds that $\alpha(G[X_t\cap V_i])\le \ell$.
The \emph{layered treewidth} (resp., \emph{layered tree-independence number}) of a graph $G$, denoted $\mathsf{ltw}(G)$ (resp., $\ltreealpha(G)$), is defined as the minimum layered width (resp., layered independence number) of a tree decomposition of $G$.
The notions of layered treewidth and layered tree-independence number were introduced by \citet{DMW17} and \citet{GMY24}, respectively.  

\section{The induced sparsification lemma}\label{sec:sparsification}

In this section, we prove the following key induced sparsification result (\Cref{induced-sparsification}): If a $d$-degenerate graph contains a sufficiently large $r$-shallow clique minor, then one can extract from it an induced $K_7$-free subgraph of controlled order and large treewidth. 

In order to do so, we need two elementary lemmas. The first deals with inclusion-minimal connected induced subgraphs containing a prescribed terminal set. \citet[Lemma~4]{CGKMUW19} showed that if there are $t$ terminals then every vertex of such a connector has degree at most $t$. We show the following sharper clique-number estimate. 

\begin{lemma}\label{terminal-connector}
Let $G$ be a connected graph and let $\varnothing\neq Z\subseteq V(G)$. Suppose that no proper induced subgraph of $G$ containing $Z$ is connected. Then $\omega(G)\le |Z|$.
\end{lemma}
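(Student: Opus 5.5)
The plan is to build an injection from the vertex set of an arbitrary clique $K$ of $G$ into $Z$; this immediately gives $\omega(G)\le |Z|$. If $|K|\le 1$ there is nothing to prove, since $Z\neq\varnothing$, so assume $|K|\ge 2$.

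The first step is to record two structural consequences of minimality.
\begin{enumerate}[label=(\alph*)]
\item Every $v\in V(G)\setminus Z$ is a cut vertex of $G$. Otherwise $G-v$ would be a connected proper induced subgraph containing $Z$.
\item For every $v\in V(G)\setminus Z$, every component of $G-v$ contains a vertex of $Z$. Suppose instead that a component $C$ of $G-v$ misses $Z$. Every other component of $G-v$ is adjacent to $v$, because $G$ is connected. Hence $G-V(C)$ is connected, it is a proper induced subgraph, and it contains $Z$, contradicting minimality.
\end{enumerate}

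Next I define the map $\phi\colon V(K)\to Z$.
\begin{itemize}
\item For $v\in V(K)\cap Z$, set $\phi(v):=v$.
\item For $v\in V(K)\setminus Z$, note that $K-v$ is a nonempty clique, so it lies inside a single component of $G-v$. By (a), $G-v$ has at least two components, so some component $C_v$ of $G-v$ is disjoint from $V(K)$. By (b), $C_v$ contains a vertex of $Z$; let $\phi(v)$ be such a vertex.
\end{itemize}

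The remaining step, and the only real point, is injectivity.

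First, $\phi(v)\notin V(K)$ whenever $v\notin Z$, because $C_v\cap V(K)=\varnothing$. So vertices of $K\setminus Z$ never collide with vertices of $K\cap Z$. On $K\cap Z$ the map is the identity, so no collisions occur there either.

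Now suppose $v\neq w$ lie in $V(K)\setminus Z$ and $\phi(v)=\phi(w)=:z$.
\begin{itemize}
\item Since $z\in C_v$ and $C_v$ avoids $w\in K$, every $z$–$w$ path in $G$ passes through $v$.
\item Symmetrically, every $z$–$v$ path passes through $w$.
\item Take any $z$–$v$ path $P$, which exists because $G$ is connected. By the second point, $P$ visits $w$.
\item The subpath of $P$ from $z$ to $w$ does not contain $v$, since $v$ is the endpoint of $P$. This contradicts the first point.
\end{itemize}
Hence $\phi$ is injective, so $|K|\le|Z|$ for every clique $K$, and therefore $\omega(G)\le|Z|$.

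I expect the only delicate point to be the choice of a component $C_v$ avoiding the whole clique, together with the path-crossing argument for injectivity. Everything else follows directly from minimality.
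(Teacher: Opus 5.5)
Your proof is correct and follows essentially the same route as the paper: for each $v\in K\setminus Z$, minimality gives a component of $G-v$ that avoids $K$ and contains a vertex of $Z$, and a path-crossing argument shows that the resulting map $K\setminus Z\to Z\setminus K$ is injective. The only cosmetic difference is in the injectivity step. You derive the contradiction from a $z$--$v$ path that must pass through $w$, whereas the paper uses a $z$--$w$ path with interior in $C_w$ that avoids $v$; both work.
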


\begin{proof}
Let $K$ be a clique of $G$. We show that $|K|\le|Z|$. Since $Z\neq\varnothing$, the assertion is immediate when $|K|\le1$. Hence, assume that $|K|\ge2$. For every $v\in K\setminus Z$, minimality implies that $G-v$ is disconnected. Since $K\setminus\{v\}$ is a nonempty clique, it belongs to a unique component $M_v$ of $G-v$. Let $C_v$ be a distinct component of $G-v$. Observe that $C_v$ contains a vertex of $Z$. Indeed, if this is not the case, then $G-V(C_v)$ is a proper induced subgraph of $G$ containing $Z$ and which is connected. Choose $z_v\in Z\cap V(C_v)$ and note that $z_v\notin K$.

We now claim that the vertices $z_v$, for $v\in K\setminus Z$, are pairwise
distinct. Suppose, to the contrary, that $v,w\in K\setminus Z$ are distinct but $z_v=z_w=:z$. Since $z\in C_v$, every $z,w$-path in $G$ contains $v$, as $w\in K\setminus\{v\}\subseteq M_v$. On the other hand, since $z\in C_w$, there is a $z,w$-path in $G$ whose internal vertices belong to $C_w$. This path avoids $v$, as $v\in K\setminus\{w\}$ lies in the component $M_w$ of $G-w$ distinct from $C_w$, a contradiction.

Thus the map $v\mapsto z_v$ injects $K\setminus Z$ into $Z\setminus K$.
Consequently, $|K\setminus Z|\le |Z\setminus K|$, and so $|K| =|K\cap Z|+|K\setminus Z| \le |K\cap Z|+|Z\setminus K| =|Z|$.
\end{proof}

The second lemma is an elementary clique count for $d$-degenerate graphs.

\begin{lemma}\label{count-k7}
If $G$ is $d$-degenerate, then the number of copies (as a subgraph) of $K_7$ in $G$ is at most $|V(G)|\binom d6$.
\end{lemma}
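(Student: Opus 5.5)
We use the standard degeneracy ordering. Since $G$ is $d$-degenerate, there is an ordering $v_1,\dots,v_n$ of $V(G)$, with $n=|V(G)|$, such that every vertex $v_i$ has at most $d$ neighbours $v_j$ with $j>i$. (Take $v_1$ to be a vertex of minimum degree in $G$, which has degree at most $d$ by the definition of degeneracy, then recurse on $G-v_1$, which is again $d$-degenerate because degeneracy is monotone under taking induced subgraphs.) Call these neighbours the \emph{forward neighbours} of $v_i$, and write $N^+(v_i)$ for the set of them, so $|N^+(v_i)|\le d$.

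The plan is to charge each copy of $K_7$ to one of its vertices. A copy of $K_7$ as a subgraph of $G$ is determined by its $7$-element vertex set, since that set must be a clique of $G$. Let $X$ be such a clique and let $v_i$ be its vertex of smallest index in the ordering. The remaining six vertices of $X$ all come later than $v_i$ and are all adjacent to $v_i$, so they lie in $N^+(v_i)$. Hence $X\setminus\{v_i\}$ is a $6$-element subset of $N^+(v_i)$.

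The map $X\mapsto \bigl(v_i, X\setminus\{v_i\}\bigr)$ is injective, because $X$ can be recovered from the pair. For a fixed vertex $v$, the number of possible second coordinates is at most $\binom{|N^+(v)|}{6}\le\binom d6$. Summing over the $n$ choices of $v$ gives at most $n\binom d6=|V(G)|\binom d6$ copies of $K_7$, as claimed.

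The argument is entirely routine. The only point needing care is the convention: "copies as a subgraph" must be read as vertex sets of cliques, not as labelled embeddings. Labelled embeddings would introduce an extra factor of $7!$, which the stated bound does not contain. When $d<6$ the bound is $0$, which is consistent, since a $d$-degenerate graph cannot contain $K_7$ in that case.
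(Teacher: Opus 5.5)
Your proposal is correct and matches the paper's proof: both take a degeneracy ordering, charge each $K_7$ to its earliest vertex, and observe that the remaining six vertices form a $6$-subset of that vertex's at most $d$ later neighbours. Your remark that a copy of $K_7$ is determined by its vertex set is accurate, because $K_7$ is complete.
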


\begin{proof}
Take a degeneracy ordering $v_1,\ldots,v_n$ of $G$, so that every vertex has at most $d$ later neighbors. Charge each copy of $K_7$ to its earliest vertex in the ordering. Once the charged vertex is fixed, the other six vertices must be chosen among its later neighbors, giving at most $\binom d6$ possibilities.
\end{proof}

We also require the following known results on expander graphs. For $\alpha>0$, a graph $G$ is an \emph{$\alpha$-expander} if every set $A\subseteq V(G)$ with $|A|\le |V(G)|/2$ has at least $\alpha|A|$ neighbors
in $V(G)\setminus A$. 

\begin{lemma}[\citet{KW14}; see also \cite{DN16}]\label{expanders}
There exists an integer $n_0$ such that for every even integer $n\ge n_0$
there exists a $3$-regular $1/7$-expander on $n$ vertices.
\end{lemma}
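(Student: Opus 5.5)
The plan is to prove \Cref{expanders} with the probabilistic method: a uniformly random $3$-regular graph on $n$ vertices is, with probability bounded away from zero, a $1/7$-expander, at least for all even $n$ large enough. I would work in the configuration model. Each vertex gets $3$ half-edges, and a uniformly random perfect matching $M$ is taken on the $3n$ half-edges; this requires $n$ even. The resulting multigraph is simple with probability tending to $e^{-2}>0$, and, conditioned on being simple, it is a uniform random cubic graph. It therefore suffices to show that the multigraph fails to be a $1/7$-expander with probability $o(1)$. Then for all large even $n$ some simple cubic graph is a $1/7$-expander, which defines $n_0$.

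The key step is a first-moment bound. If the multigraph is not a $1/7$-expander, there are sets $A$ with $|A|=a\le n/2$ and $B$ disjoint from $A$ with $|B|=b:=\lfloor a/7\rfloor$ such that $N(A)\setminus A\subseteq B$. Then all $3a$ half-edges of $A$ are matched into the $3(a+b)$ half-edges of $A\cup B$. The probability that a fixed set of $3a$ half-edges is matched inside a fixed set of $3(a+b)$ half-edges is at most
\[
\prod_{i=0}^{3a/2-1}\frac{3(a+b)-2i-1}{3n-2i-1}\le\Bigl(\frac{3(a+b)}{3n}\Bigr)^{3a/2}\cdot e^{O(a)}\text{-type corrections}.
\]
More precisely, I would use the standard bound $\bigl((a+b)/n\bigr)^{3a/2}$ up to a factor $C^{a}$. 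The number of choices of $(A,B)$ is $\binom{n}{a}\binom{n}{b}\le (en/a)^{a}(en/b)^{b}$. Summing over $a$ should give an expression of the form $\sum_a \bigl(C\,(a/n)^{1/2-1/7}\bigr)^{a}$, since the exponent of $n/a$ is $a+b-3a/2\le a(1/7-1/2)<0$. This is $o(1)$ for $a\le \eta n$ with a small constant $\eta$.

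I expect the main obstacle to be the range $\eta n\le a\le n/2$. There the crude estimate is not decaying, and one must use the exact entropy: compare $\binom{n}{a}\binom{n-a}{b}$ with the exact matching probability $\binom{3(a+b)}{3a}$-weighted ratio. Concretely, I would write the probability that no edge joins $A$ to $V\setminus(A\cup B)$ exactly, as a ratio of matching counts, and take $\frac1n\log$ of the expected number of bad pairs. This gives a continuous function of $x=a/n\in[\eta,1/2]$, which I would check is strictly negative; the constant $1/7$ is chosen with enough slack for this. If this numeric verification proves too delicate, the fallback is to cite a known lower bound on the vertex (or edge) expansion of random cubic graphs. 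An edge-expansion constant $h$ converts to vertex expansion at least $h/3$, so $h\ge 3/7$ would suffice; alternatively one can invoke explicit constructions, as in \citet{KW14}.

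Finally, for very small $a$ (with $b=0$, i.e.\ $a<7$), the condition says that $A$ is a union of components of size less than $7$. Its probability is $O(n^{-1/2})$ or smaller by the same count, since such an event needs $3a/2$ edges on $a$ vertices. So these sets are also absorbed in the union bound, and the argument closes.
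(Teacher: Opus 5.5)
You have a genuine gap in the middle range. The paper does not prove this lemma; it cites \citet{KW14} (see also \cite{DN16}). Your configuration-model reduction and your treatment of $a\le\eta n$ are fine. The trouble is the step you leave to a numerical check: with $B$ padded to size exactly $\lfloor a/7\rfloor$, the expected number of bad pairs is exponentially \emph{large} near $a=n/2$.

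Take $a=n/2$ and $b=n/14$, with $28\mid n$. Fix a set $K$ of $k$ half-edges of $B$ with $3n/2-k$ even. Count the configurations in which every half-edge of $A$ is matched within $A$ or into $K$, and all of $K$ is used. There are $\binom{3n/2}{k}k!\,\bigl((3n/2-k-1)!!\bigr)^2$ of them, which is $2^{-3n/2+o(n)}\cdot(3n-1)!!$ for every such $k$. Your event is the disjoint union over all admissible $K$, so its probability is $2^{3b-3n/2+o(n)}$. The number of pairs $(A,B)$ is $\binom{n}{n/2}\binom{n/2}{n/14}=e^{(\ln 2+\frac12 h(1/7)+o(1))n}$, where $h(p)=-p\ln p-(1-p)\ln(1-p)$. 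The resulting exponent is
\[
\ln 2+\tfrac12 h(1/7)-\tfrac97\ln 2\approx 0.8982-0.8912>0 .
\]
The cause is the padding. Vertices of $B$ need not receive any edge from $A$, and at $a=n/2$ this overcount costs a factor $(8/7)^b$, which is enough to flip the sign. So the claim that $1/7$ leaves ``enough slack'' is false.

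The repair is to count only exact boundaries $B=N(A)\setminus A$, where every vertex of $B$ receives an edge from $A$, and to sum over $|B|<a/7$. At $a=n/2$ this replaces $2^{3b}$ by $7^b$, and the exponent becomes
\[
\ln 2+\tfrac12 h(1/7)+\tfrac1{14}\ln 7-\tfrac32\ln 2\approx-0.0025 .
\]
This is negative, but $1/7$ lies just below the first-moment threshold, which is about $0.144$ at $a=n/2$. So the refined optimization over all of $[\eta n,n/2]$ is a delicate computation you would actually have to carry out, not a routine check.

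Your fallback does not work either. Edge expansion $3/7$ is unattainable for large cubic graphs: every $n$-vertex cubic graph has a bisection of width at most $(1/6+o(1))n$ (Monien--Preis), so its edge expansion is at most about $1/3$. Finally, \citet{KW14} is a result about random regular graphs, not an explicit construction. Citing it is legitimate, and it is exactly what the paper does, but then your first-moment argument is not what carries the proof.
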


\begin{lemma}[\citet{DN16}]\label{expander-tw} Let $\alpha > 0$. If $G$ is an $\alpha$-expander, then \[\tw(G)\ge \frac{\alpha}{3(1+\alpha)}|V(G)|-1.\]
\end{lemma}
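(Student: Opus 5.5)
The plan is to take an optimal tree decomposition of $G$, extract from it a small separator $S$ via \Cref{balanced-bag}, and then play the expansion property against $S$. Let $n:=|V(G)|$, fix a tree decomposition of width $\tw(G)$, and let $S:=X_{t_0}$ be the bag given by \Cref{balanced-bag}. Then $|S|\le \tw(G)+1$, and every component of $G-S$ has at most $n/2$ vertices. It therefore suffices to prove $|S|\ge \frac{\alpha}{3(1+\alpha)}n$.

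The key step is to partition the components of $G-S$ into two unions $A$ and $B$, with $A\cup B=V(G)\setminus S$, that are reasonably balanced. Since no edges join distinct components, $N(A)\setminus A\subseteq S$ and $N(B)\setminus B\subseteq S$. Let $D$ be the smaller of $A$ and $B$. Then $|D|\le (n-|S|)/2\le n/2$, so the expander property applies to $D$ and gives
\[
\alpha|D|\le |N(D)\setminus D|\le |S|.
\]
It remains to find a partition for which $|D|$ is large; I split into two cases according to the largest component $C$.

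\emph{Case 1: $|C|>2(n-|S|)/3$.} Take $A:=C$ and let $B$ be the rest. Since $|C|\le n/2$, we get $|B|\ge n/2-|S|$, and also $|A|\ge n/2-|S|$ because $|A|>|B|$. Hence $\alpha(n/2-|S|)\le |S|$, that is,
\[
|S|\ge \frac{\alpha n}{2(1+\alpha)}.
\]

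\emph{Case 2: every component has at most $2(n-|S|)/3$ vertices.} Here I use the standard greedy bin argument. Add components one at a time to $A$ until $|A|\ge (n-|S|)/3$. If a single component already reaches this threshold, take $A$ to be that component alone. Otherwise every component is smaller than $(n-|S|)/3$, so the process stops with $|A|<2(n-|S|)/3$. In either situation both $A$ and $B$ have at least $(n-|S|)/3$ vertices. Then $\alpha(n-|S|)/3\le |S|$, which gives
\[
|S|\ge \frac{\alpha n}{3+\alpha}\ge \frac{\alpha n}{3(1+\alpha)}.
\]

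Both cases give $\tw(G)+1\ge|S|\ge \frac{\alpha}{3(1+\alpha)}n$, which is the claim. The only mildly delicate point is this balancing step, and in particular making sure that the set to which the expansion property is applied has at most $n/2$ vertices. Taking the smaller side $D$ handles this automatically.
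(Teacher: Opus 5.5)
Your proof is correct, and it is essentially the standard argument behind this lemma; the paper gives no proof of its own and only cites Dvo\v{r}\'ak and Norin. That argument also takes a small balanced separator, splits the rest into two sides with no edges between them, and applies expansion to the smaller side. Your case split is not needed to reach the stated constant: since every component of $G-S$ has at most $n/2\le 2n/3$ vertices, you can group them into two sides of at most $2n/3$ vertices each, so the smaller side $D$ has $|D|\ge n/3-|S|$, and $\alpha(n/3-|S|)\le|S|$ gives exactly $|S|\ge\frac{\alpha}{3(1+\alpha)}n$. This works for any balanced separator, so together with $\mathsf{sep}(G)-1\le\tw(G)$ from \Cref{DNsep} it yields the lemma directly.
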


The proof of \Cref{induced-sparsification} exploits the redundancy in a large $r$-shallow clique model as follows. We first partition $nL$ branch sets into $n$ blocks $\mathcal A_x$ of size $L$ (to be fixed later), indexed by the $n$ vertices of a fixed cubic expander $F$. For every edge $xy\in E(F)$ and every pair $B\in\mathcal A_x$, $B'\in\mathcal A_y$, we fix an edge joining $B$ and $B'$. Using the radius-$r$ assumption, we retain inside each branch set only $O(L(r+1))$ vertices needed to connect all these prescribed attachment points. The induced subgraph $J_0$ on all these retained vertices has then $O(nL^2(r+1))$ vertices.

We then choose independently and uniformly at random one branch set from each block. The prewiring above guarantees that the selected branch sets still contain $F$ as a minor. Now, since $F$ is cubic, each selected branch set contains at most three attachment points. Hence, the subgraph induced by an inclusion-minimal connector for these attachment points has clique number at most three by \Cref{terminal-connector}. This implies that every $K_7$ in the subgraph $J$ induced by the selected connectors must meet at least three selected branch sets. A fixed $K_7$ of $J_0$ therefore survives in $J$ with probability at most $1/L^{3}$. Since $J_0$ is $d$-degenerate, \Cref{count-k7} then implies that the expected number of surviving $K_7$'s is $O_d(n(r+1)/L)$. Choosing $L=\Theta_d(r+1)$, we can then delete one vertex from each surviving $K_7$ while losing only a small linear amount of treewidth.

Note that the probabilistic mechanism we use is reminiscent of Bonnet's induced-sparsification argument~\cite[Lemma~16]{Bonnet2025}. However, the implementations are different.

\begin{lemma}\label{induced-sparsification}
There is an absolute integer $M\ge1$ with the following property. Let
$d,r\in\mathbb N_{0}$ and $t\in\N$, and let $G$ be a $d$-degenerate graph containing $K_t$ as an $r$-shallow minor. Let $b_d:=\max\left\{1,\binom d6\right\}$ and $L:=\left\lceil 576\,b_d(r+1)\right\rceil$. If $t\ge2ML$,
then $G$ contains an induced subgraph $H$ such that
\begin{enumerate}[label=(\roman*)]
  \item $\omega(H)\le6$
  \item $|V(H)|\le \frac{3t(r+1)}{L}$
  \item $\tw(H)\ge \frac{t}{128L}$.
\end{enumerate}
\end{lemma}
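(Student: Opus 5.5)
The plan follows the overview above. Let $M:=n_0$ from \Cref{expanders}, rounded up to be even. Put $n:=2\lfloor t/(2L)\rfloor$. Since $t\ge 2ML$, we get $n\ge M\ge n_0$, $n$ is even, and $n\ge t/(2L)$. Fix a $3$-regular $1/7$-expander $F$ on $n$ vertices. By \Cref{expander-tw}, $\tw(F)\ge n/24-1$.

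\textbf{Step 1: prewiring.} Take an $r$-shallow model $\{B_1,\dots,B_t\}$ of $K_t$ and discard all but $nL\le t$ branch sets. Partition the remaining ones into blocks $\mathcal A_x$, $x\in V(F)$, each of size $L$. For every $xy\in E(F)$ and every pair $B\in\mathcal A_x$, $B'\in\mathcal A_y$, fix one edge $e_{B,B'}$ of $G$ between $B$ and $B'$. Its endpoint in $B$ is an attachment point of $B$. Each $B$ then has at most $3L$ attachment points.

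\textbf{Step 2: sampling and pruning.} Choose independently and uniformly one $B_x\in\mathcal A_x$ for each $x$. In $B_x$, at most three attachment points are relevant, namely those of $e_{B_x,B_y}$ for the three neighbours $y$ of $x$. Since $G[B_x]$ has radius at most $r$, the union of shortest paths from the centre to these points spans a connected set of at most $3r+1$ vertices. Inside it, take an inclusion-minimal set $C_x$ such that $G[C_x]$ is connected and contains the three points.

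The sets $C_x$ are pairwise disjoint, and each $G[C_x]$ is connected. For every $xy\in E(F)$, the edge $e_{B_x,B_y}$ joins $C_x$ and $C_y$. Hence $J:=G[\bigcup_x C_x]$ contains $F$ as a minor. By \Cref{terminal-connector}, $\omega(G[C_x])\le 3$. Since $|V(J)|\le n(3r+1)\le 3t(r+1)/L$, property (ii) holds already for $J$, and therefore for every induced subgraph of $J$.

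A minimal connector depends only on the chosen $B_x$ and its neighbours' choices. To make the probabilistic bookkeeping clean, I would instead decide the connector for $B$ in a way that depends only on $B$ and the choices in the neighbouring blocks. Alternatively, I would bound everything inside the fixed pool $U_B:=$ the union of shortest paths from the centre of $B$ to all $3L$ potential attachment points of $B$, so that $J\subseteq J_0:=G[\bigcup_B U_B]$. The pool $J_0$ is $d$-degenerate.

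\textbf{Step 3: killing copies of $K_7$.} A $K_7$ in $J$ has at most three vertices in each $C_x$, so it meets at least three distinct blocks. Every $K_7$ of $J_0$ touches the pools of at least three branch sets lying in distinct blocks, and these must all be selected for it to survive in $J$. That happens with probability at most $L^{-3}$. By \Cref{count-k7}, the expected number of surviving copies is therefore at most
\[
|V(J_0)|\,b_d\,L^{-3}\le nL\cdot 3L(r+1)\cdot b_d\,L^{-3}=3nb_d(r+1)/L\le n/192 .
\]
So some choice leaves at most $n/192$ copies. Delete one vertex from each of them to obtain $H$, which gives (i). Each deletion lowers treewidth by at most $1$, since removing a vertex from every bag of a decomposition of $H$ still decomposes $H$ minus that vertex. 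Using minor-monotonicity,
\[
\tw(H)\ge \tw(F)-n/192\ge n/24-1-n/192,
\]
which is at least $n/64\ge t/(128L)$ once $M$ is large. This gives (iii).

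\textbf{Main obstacle.} The delicate point is the independence claim in Step 3: that a $K_7$ meeting at most two blocks cannot survive. A $K_7$ in $J_0$ might use two vertices from different pools of the same block, and only one of those branch sets is selected. So I would count a $K_7$ of $J_0$ as surviving only if all branch sets whose pools it meets are selected. I would then argue that a $K_7$ meeting at most two selected sets has at least four vertices inside one $C_x$, which contradicts $\omega(G[C_x])\le 3$.

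Making $C_x\subseteq U_{B_x}$ with $U_B$ fixed before sampling is exactly what guarantees this.
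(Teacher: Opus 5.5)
Your proposal is correct and follows the paper's proof essentially step for step. It uses the same $n$ (the largest even integer at most $\lfloor t/L\rfloor$ is exactly $2\lfloor t/(2L)\rfloor$), the same prewiring with fixed pools built before sampling, minimal connectors controlled by \Cref{terminal-connector}, the $L^{-3}$ survival bound combined with \Cref{count-k7}, and deletion of one vertex per surviving $K_7$. Two small repairs: take $M:=\max\{n_0,48\}$ so that $n/24-1-n/192\ge n/64$ is guaranteed, and justify $\tw(J)\le\tw(J-X)+|X|$ by adding $X$ to every bag of a decomposition of $J-X$, since your stated reason (removing a vertex from every bag) only gives the opposite inequality $\tw(J-v)\le\tw(J)$.
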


\begin{proof}
Let $n_0$ be as in \Cref{expanders}, and set $M:=\max\{n_0,48\}$.
Let $\{B_i\}_{i\in[t]}$ be an $r$-shallow minor model of $K_t$ in $G$.
Thus the branch sets $B_i \subseteq V(G)$ are pairwise disjoint, every $G[B_i]$ is connected and has radius at most $r$, and every two distinct branch sets are joined by an edge of $G$.

Set $T:=\left\lfloor t/L \right\rfloor$. Since $t\ge2ML$, we have $T\ge2M$. Let $n$ be the largest even integer at most $T$. Thus $n\ge 2M$ and, by \Cref{expanders}, there exists a $3$-regular $1/7$-expander $F$ on $n$ vertices. Applying \Cref{expander-tw} with $\alpha = 1/7$ gives $\tw(F)\ge \frac n{24}-1$. Observe now that, by definition of $n$ and $T$, we have $n\le T\le t/L$ and $T\le n+1$. Moreover, $t<L(T+1)\le L(n+2)\le2Ln$, where the last inequality uses $n\ge M\ge48$. Combining these, we get 
\begin{equation}\label{eq:m-window}
  \frac{t}{2L}<n\le\frac{t}{L}.
\end{equation}

Choose $nL$ of the branch sets and partition them into $n$ groups $\mathcal A_x$, for $x\in V(F)$, each consisting of exactly $L$ branch sets. For every edge $xy\in E(F)$ and every pair $B\in\mathcal A_x$ and $B'\in\mathcal A_y$,
choose one edge of $G$ with one endpoint in $B$ and the other in $B'$.
Note that such an edge exists because the original branch sets form a clique minor model. Let $E_0\subseteq E(G)$ be the set of the chosen edges. 

Fix now a branch set $B\in\mathcal A_x$. Since $F$ is $3$-regular, $B$ is
incident with at most $3L$ edges from $E_0$ and let $Z_B$ be the set of their
endpoints in $B$. Hence, $|Z_B|\le3L$. Choose a center $c_B\in B$ such
that every vertex of $B$ is at distance at most $r$ from $c_B$ in $G[B]$.
For every $z\in Z_B$, choose a $c_B,z$-path in $G[B]$ of length at most
$r$, and let $C_B$ be the union of the vertex sets of these paths (if
$Z_B=\varnothing$, put $C_B:=\{c_B\}$). Hence, $|C_B|\le1+3Lr$. Note also that every edge in $E_0$ has both endpoints in the corresponding sets $C_B$.

Define now the induced subgraph
\[
  J_0:=G\left[\bigcup_{x\in V(F)}\ \bigcup_{B\in\mathcal A_x} C_B\right].
\]
We have that $|V(J_0)|\leq nL(1+3Lr)\le 3nL^2(r+1)$.
Since $J_0$ is an induced subgraph of the $d$-degenerate graph $G$, it is
$d$-degenerate. Hence by \Cref{count-k7}, the number $N_7(J_0)$ of
copies of $K_7$ in $J_0$ satisfies
\begin{equation}\label{eq:N7J0}
  N_7(J_0) \le |V(J_0)|\binom d6 \le 3nL^2(r+1)b_d.
\end{equation}

We now perform a random thinning. Independently for every
$x\in V(F)$, choose one branch set $B_x$ uniformly from $\mathcal A_x$.
For every neighbor $y$ of $x$ in $F$, let $e_{xy} \in E_0$ denote the edge that was chosen for the pair $B_x,B_y$, and let $z_{xy}$ be its
endpoint in $B_x$. Finally, set $Z_x:=\{z_{xy}:y\in N_F(x)\}$.
Since $F$ is $3$-regular, $1 \leq |Z_x|\le3$.
By construction of $C_{B_x}$, the set $Z_x$ together with the center
$c_{B_x}$ is contained in a connected subgraph of $G[B_x]$ on at most
$1+3r$ vertices, namely the union $U_x$ of the vertex sets of the at most three selected $c_{B_x},Z_x$-paths. Among all subsets $W\subseteq U_x$ containing $Z_x$ and such that $G[W]$ is connected, choose an inclusion-minimal one $W_x$. If there is more than one such set, choose one according to a fixed deterministic tie-breaking rule. Then $|W_x|\le1+3r\leq3(r+1)$
and, by \Cref{terminal-connector}, $\omega(G[W_x])\le |Z_x| \le3$.

Let now
\[
  J:=G\left[\bigcup_{x\in V(F)}W_x\right].
\]
The sets $W_x$ are pairwise disjoint, as they lie in distinct original
branch sets. For every edge $xy\in E(F)$, the selected edge
$e_{xy}\in E_0$ has one endpoint in $W_x$ and the other in $W_y$. Therefore,
$\{W_x\}_{x\in V(F)}$ is a minor model of $F$ in $J$. Consequently,
\begin{equation}\label{eq:Jtw}
  \tw(J)\ge\tw(F)\ge\frac n{24}-1.
\end{equation}
Also, 
\begin{equation}\label{eq:Jsize}
  |V(J)|\le3n(r+1).
\end{equation}

Let $Z$ be the random variable counting the number of copies of $K_7$ in
$J$. We claim that every fixed copy $Q\cong K_7$ in $J_0$ survives in $J$
with probability at most $1/L^{3}$. Indeed, if $Q$ is contained in $J$ then, since $\omega(G[W_x])\le3$, the clique $Q$ contains at most three vertices from each selected branch set $B_x$. Hence, $Q$ meets at least three selected branch
sets. If two of those branch sets belong to the same group $\mathcal A_x$,
then the survival probability is $0$, as only one branch set is selected
from each group. Otherwise, $Q$ meets at least three branch sets belonging to
distinct groups, and all of them must be selected. By independence, this has
probability at most $1/L^{3}$. Therefore, using \eqref{eq:N7J0}, we obtain
\begin{equation*}
  \mathbb E[Z]
  \le \frac{N_7(J_0)}{L^3}
  \le \frac{3n(r+1)b_d}{L}
  \le \frac n{192}.
\end{equation*}

Hence, there is a deterministic choice of the branch sets for which $Z\le n/192$.
Fix such a choice. Select one vertex from each copy of $K_7$ in $J$, and let
$X$ be the set of selected vertices. Clearly, $|X|\le n/192$.
We finally set $H:=J-X$. Since $J$ is induced in $G$, the graph $H$ is also an induced subgraph of $G$. The set $X$ meets every copy of $K_7$ in $J$, hence $H$ is $K_7$-free and so $\omega(H)\le6$.
Moreover, by \eqref{eq:Jsize} and the upper bound in \eqref{eq:m-window},
\[
  |V(H)|\le |V(J)|\le3n(r+1) \le \frac{3t(r+1)}{L}.
\]
Finally, using the well-known\footnote{From a tree decomposition of $J-X$ we obtain a tree decomposition of $J$ by adding every vertex of $X$ to every bag.} inequality $\tw(J)\le\tw(J-X)+|X|$ and recalling \eqref{eq:Jtw} and that $|X| \leq n/192$, we obtain 
\[
  \tw(H)
  \ge \tw(J)-|X|
  \ge \left(\frac n{24}-1\right)-\frac n{192}
  >\frac n{64}
  >\frac{t}{128L},
\]
where the second to last inequality follows from $n > 48$ and the last inequality follows from the lower bound in \eqref{eq:m-window}. This completes the proof.
\end{proof}

\section{Uniformization of clique-dependent separators}\label{sec:uniformization}

In this section, we combine the induced sparsification lemma (\Cref{induced-sparsification}) with the Plotkin--Rao--Smith theorem to obtain strongly sublinear separators with a power saving uniform in the clique number, while keeping polynomial control of the multiplicative dependence on the clique number. This is the content of \Cref{poly-coefficient}, which gives $\textup{(ii)}\Rightarrow\textup{(iv)}$ in \Cref{equivalence}.

The induced sparsification lemma allows us to transfer the separator bound
from the clique-$6$ slice to shallow clique minors in every bounded-clique
slice. Indeed, a large shallow clique minor would produce an induced
$K_7$-free subgraph of large treewidth. But this induced subgraph belongs to $\C_{\le6}$, where strongly sublinear separators give an opposite
treewidth bound. Comparing the two bounds yields a polynomial bound on the
order of shallow clique minors. We formalize this in the following. Recall that $\omegar(G):=\max\{t:K_t\text{ is an $r$-shallow minor of }G\}$.

\begin{proposition}\label{shallow-clique} Let $\C$ be a hereditary class. Suppose that there exist $c\ge 1$ and $\eta\in(0,1]$ such that every $n$-vertex graph in $\C_{\le6}$ has a balanced separator of size at most $c n^{1-\eta}$. Suppose further that, for every $q\in \mathbb{N}$, there exists $d_q \in \mathbb{N}$ such that every graph in $\C_{\le q}$ is $d_q$-degenerate. Then there is a constant $C_{c,\eta}\ge1$ such that, for every $r\ge0$ and every $G\in\C_{\le q}$,
\begin{equation*}
  \omegar(G)\le C_{c,\eta}(1+d_q^6)(r+1)^{\frac1\eta}.
\end{equation*}
\end{proposition}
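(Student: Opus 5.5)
The plan is to compare two treewidth bounds on a single auxiliary graph. Fix $q$, $r\ge0$ and $G\in\C_{\le q}$, and set $t:=\omegar(G)$. Since $G$ is $d_q$-degenerate, we may apply \Cref{induced-sparsification} with $d=d_q$. Let $b:=b_{d_q}=\max\{1,\binom{d_q}{6}\}\le 1+d_q^6$ and $L=\lceil 576\,b(r+1)\rceil$, so that $L\le 577\,b(r+1)$.

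If $t<2ML$, then already $t<2M\cdot 577(1+d_q^6)(r+1)\le 1154M(1+d_q^6)(r+1)^{1/\eta}$, because $r+1\ge1$ and $1/\eta\ge1$. This case is therefore absorbed into the constant.

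Otherwise $t\ge 2ML$, and \Cref{induced-sparsification} yields an induced subgraph $H$ of $G$ with the following properties:
\begin{itemize}
\item $\omega(H)\le6$;
\item $h:=|V(H)|\le 3t(r+1)/L$;
\item $\tw(H)\ge t/(128L)$.
\end{itemize}
Since $\C$ is hereditary, $H\in\C_{\le6}$, and the same holds for every induced subgraph of $H$. By \Cref{subgraph-separator}, every subgraph of $H$ on $m$ vertices has a balanced separator of size at most $cm^{1-\eta}\le ch^{1-\eta}$. Hence $\sepnum(H)\le ch^{1-\eta}$, and \Cref{DNsep} gives $\tw(H)\le 15c\,h^{1-\eta}$.

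Combining the two bounds gives
\[
\frac{t}{128L}\le 15c\left(\frac{3t(r+1)}{L}\right)^{1-\eta},
\]
that is,
\[
t^{\eta}\le 1920c\cdot 3^{1-\eta}\,L^{\eta}(r+1)^{1-\eta}.
\]
Raising both sides to the power $1/\eta$ yields
\[
t\le (5760c)^{1/\eta}\,L\,(r+1)^{(1-\eta)/\eta}.
\]
Inserting $L\le 577\,b(r+1)\le 577(1+d_q^6)(r+1)$ gives
\[
t\le 577(5760c)^{1/\eta}(1+d_q^6)(r+1)^{1/\eta}.
\]
Taking $C_{c,\eta}:=\max\{1154M,\,577(5760c)^{1/\eta}\}$ covers both cases.

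The only real obstacle is bookkeeping. First, the quantity $L$ scales like $b(r+1)$, so it must be tracked carefully to land exactly on the exponent $1/\eta$ with linear dependence on $b$. Second, one must pass from the separator hypothesis on $\C_{\le6}$ to the separation number of $H$, which requires the separator bound for all subgraphs and not just induced ones. \Cref{subgraph-separator} handles this, since $cn^{1-\eta}$ is nondecreasing. Everything else follows directly from \Cref{induced-sparsification}.
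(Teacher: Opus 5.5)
Your proposal is correct and follows essentially the same route as the paper: the same case split at $t<2ML$, the same use of \Cref{induced-sparsification} to land in $\C_{\le6}$, and the same comparison with the treewidth upper bound obtained from \Cref{subgraph-separator} and \Cref{DNsep}. Your explicit constants, using $L\le 577\,b(r+1)$ and $3^{1-\eta}\le 3$, are just a more concrete version of the paper's bookkeeping.
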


\begin{proof}
Fix $q\in\mathbb{N}$, and let $d:=d_q$. Let $G\in\C_{\le q}$ and suppose that $K_t$ is an $r$-shallow minor of $G$. We first record a treewidth upper bound for graphs in $\C_{\le6}$. Let $H\in\C_{\le6}$ be a graph on $n$ vertices. By \Cref{subgraph-separator}, every subgraph $J$ of $H$ has a balanced separator of size at most $c|V(J)|^{1-\eta}\le cn^{1-\eta}$. Hence, by \cite[Theorem~1]{DN19}, 
\begin{equation}\label{eq:C6tw}
  \tw(H)\le15cn^{1-\eta}.
\end{equation}
Let $M\ge1$, $b_d:=\max\left\{1,\binom{d}{6}\right\}$, and $L:=\left\lceil576b_d(r+1)\right\rceil$ be the constants from \Cref{induced-sparsification}. 

If $t<2ML$, then 
\[
  t< 2ML \le 2M(576b_d+1)(r+1).
\]

Suppose instead that $t\ge2ML$. By \Cref{induced-sparsification}, there is
an induced subgraph $H$ of $G$ such that $\omega(H)\le6$, $|V(H)|\le 3t(r+1)/L$, and $\tw(H)\ge t/(128L)$.
Since $\C$ is hereditary, $H\in\C_{\le6}$. Applying \eqref{eq:C6tw} gives
\[
  \frac{t}{128L}
  \le15c\left(\frac{3t(r+1)}{L}\right)^{1-\eta}
\]
which, after rearranging, yields $t^{\eta} \le 1920c\,3^{1-\eta}L^{\eta}(r+1)^{1-\eta}$.
Thus, setting $A:=\bigl(1920c\,3^{1-\eta}\bigr)^{1/\eta}$ and since $L\le (576b_d+1)(r+1)$, we obtain 
\[
  t\le A L(r+1)^{(1-\eta)/\eta} \le A(576b_d+1)(r+1)^{\frac{1}{\eta}}.
\]

Therefore, combining the two cases, and since $1/\eta\ge1$ and
$b_d\le1+d^6$, we obtain that there exists $C_{c,\eta}\ge1$ such that
\[ \omegar(G)\le C_{c,\eta}(1+d^6)(r+1)^{1/\eta},\] for every $r\ge0$ and $G\in\C_{\le q}$.
\end{proof}

We next convert the shallow-minor bound resulting from \Cref{shallow-clique} back into a separator bound. This is a direct application of the Plotkin--Rao--Smith theorem, which we record as stated in \cite[Theorem~1]{Dvorak2021}.

\begin{theorem}[\citet{PRS94}]\label{PRS}
Let $G$ be an $n$-vertex graph with $n\ge2$, and let $\ell,h\in\mathbb{N}$ be
integers. Then either
\begin{enumerate}[label=(\roman*)]
  \item $G$ has a balanced separator of size at most $\frac n\ell+2h^2\ell\log n$, or
  \item $G$ contains $K_h$ as a $(2\ell\log n)$-shallow minor.
\end{enumerate}
\end{theorem}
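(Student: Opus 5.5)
The plan is the standard iterative argument of Plotkin, Rao and Smith. We try to build an $r$-shallow model of $K_h$ one branch set at a time, inside a shrinking region of $G$. Every time this attempt fails, we cut off part of the region with a cheaply charged separator.

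\textbf{The invariant.} We maintain three objects:
\begin{itemize}
\item a vertex set $C\subseteq V(G)$, the current \emph{region}, initially $V(G)$;
\item a set $S$ of \emph{cut} vertices, initially empty;
\item a family $B_1,\dots,B_j$ of pairwise disjoint, pairwise adjacent branch sets, initially $j=0$.
\end{itemize}
These are required to satisfy:
\begin{itemize}
\item each $G[B_i]$ is a tree of radius at most $2\ell\log n$ with at most $h\ell\log n+1$ vertices;
\item every $B_i$ has a neighbour in $C$;
\item $C$ is a union of components of $G-S-\bigcup_i B_i$.
\end{itemize}
Whenever some $B_i$ loses all its neighbours in $C$, we simply forget it. Such a $B_i$ does not need to be in the final separator, because it is not adjacent to the part of the graph we still care about.

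The process stops when the region has at most $2n/3$ vertices. At that point $S\cup\bigcup_i B_i$ separates $C$ from the rest of the graph, and every piece cut off earlier was already separated when it was discarded. So this set will be a balanced separator.

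\textbf{The growing step.} While $|C|>2n/3$, pick $v\in C$ and run breadth-first search in $G[C]$ from $v$, with layers $L_0,L_1,\dots$ and balls $D_k=L_0\cup\dots\cup L_k$.
\begin{itemize}
\item \emph{Some early layer is thin.} Suppose there is a first $k\le \ell\ln n$ with $|L_{k+1}|\le |D_k|/\ell$. Move $L_{k+1}$ into $S$ and discard $D_k$; more precisely, keep whichever side of the cut is larger, which can only help. The cut costs at most $|D_k|/\ell$. We charge this cost to the discarded vertices, which never return to $C$, so the total size of $S$ over the whole process is at most $n/\ell$.
\item \emph{No early layer is thin.} Then $|D_k|\ge(1+1/\ell)^k$, so the ball exhausts $C$ within radius $\ell\ln n\le \ell\log n$. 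Every $B_i$ has a neighbour $u_i\in C$. Let $B_{j+1}$ be the union of shortest $v$–$u_i$ paths in $G[C]$, trimmed to a BFS tree. It has radius at most $\ell\log n$, at most $h\ell\log n+1$ vertices, and is adjacent to every $B_i$.
\end{itemize}

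\textbf{The end.} After the growing step we remove $B_{j+1}$ from $C$ and update the region to the largest component (plus bookkeeping). If $j+1=h$, we have found $K_h$ as a $(2\ell\log n)$-shallow minor, which is outcome~(ii). Otherwise we repeat. If the process instead stops with $|C|\le 2n/3$, the separator has size at most
\[
\frac{n}{\ell}+h\,(h\ell\log n+1)\le\frac n\ell+2h^2\ell\log n,
\]
which is outcome~(i).

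\textbf{Main obstacle.} The delicate point is keeping the live branch sets both few (at most $h$) and small, while the region keeps changing. This is exactly why a branch set that loses contact with $C$ must be discarded rather than paid for. I would also check carefully two further points:
\begin{itemize}
\item every discarded part really lies in a component of size at most $2n/3$ of the complement of the final separator;
\item the charging argument for $S$ does not double count.
\end{itemize}
Since the paper only records this theorem as stated in \cite{Dvorak2021}, I would in the end defer the details to that source.
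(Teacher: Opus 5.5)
The paper does not prove this theorem; it quotes it from Plotkin, Rao and Smith in the form given in \cite{Dvorak2021}, so your sketch has to stand on its own. Its overall architecture is right. The first point you flag is also harmless: while the process runs, the region has more than $2n/3$ vertices. Hence all non-separator vertices outside it, including any pieces that merge when a tree is forgotten, number fewer than $n/3$.

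The genuine gap is the thin-layer step, which is your second worry. The test $|L_{k+1}|\le|D_k|/\ell$ bounds the cut only in terms of the ball, so the charge is legitimate only if the ball $D_k$ is what you throw away. You instead keep the larger side. When that side is $D_k$, the discarded set is $C\setminus D_{k+1}$, and it can be much smaller than $\ell|L_{k+1}|$. For example, take $\ell=10$, a ball of $0.9|C|$ vertices and a next layer of $0.05|C|$ vertices. This layer passes your test, and you pay $0.05|C|$ to discard only $0.05|C|$ vertices. If $L_{k+1}$ is the last layer, you even pay for discarding nothing. The cut is then paid for by vertices that stay in the region and can be charged again. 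Nothing better than roughly the $n/3$ vertices the region must lose then bounds $|S|$. Always discarding $D_k$ is no remedy either: $D_k$ may have more than $2n/3$ vertices, and the final separator would be unbalanced. So ``which can only help'' is exactly where the argument breaks.

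One standard repair is a two-sided test. Put $m:=|C|$ and $U_k:=C\setminus D_k$.
\begin{itemize}
\item While $|D_k|\le m/2$, cut at the first layer with $|L_{k+1}|\le|D_k|/\ell$ and discard $D_k$.
\item Once $|D_k|>m/2$, cut at the first layer with $U_{k+1}\neq\varnothing$ and $|L_{k+1}|\le|U_{k+1}|/\ell$. Discard $U_{k+1}$ and keep the connected ball $D_k$.
\end{itemize}
Either way the discarded part has at most $m/2\le n/2$ vertices and pays for the cut at rate $1/\ell$. Since discarded vertices never re-enter the region, this gives $|S|\le n/\ell$ with no double counting.

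If no cut occurs, the ball grows by a factor $1+1/\ell$ per layer in the first phase, and the outside shrinks by that factor in the second. Since $\ln(1+1/\ell)\ge(\ln 2)/\ell$, each phase lasts at most about $\ell\log n$ layers. Your $\ell\ln n$ is too few, because $(1+1/\ell)^{\ell\ln n}<n$. Hence the BFS exhausts $C$ within depth $2\ell\log n$. The new tree then has radius at most $2\ell\log n$ and at most $2(h-1)\ell\log n+1$ vertices. The at most $h-1$ live trees contribute at most $2h^2\ell\log n$ to the separator. This is where the factor $2$ in both $2\ell\log n$ and $2h^2\ell\log n$ in the statement comes from. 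With this change, the rest of your outline goes through.
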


\begin{lemma}\label{shallow-to-separator}
Let $0<\eta\le 1$ and $B\ge1$. Let $G$ be an $n$-vertex graph with $n\ge2$ and suppose that $\omegar(G)\le B(r+1)^{1/\eta}$
for every $r\ge0$. Then there is a constant $C_{\eta}\ge1$ such that
\[
  \mathsf{sep}(G)\le C_{\eta} B^2 n^{1-\frac{\eta}{2\eta+2}}\log n.
\]
\end{lemma}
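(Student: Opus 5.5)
We apply the Plotkin--Rao--Smith theorem (\Cref{PRS}) with a suitably chosen pair $(\ell,h)$, so that alternative (ii) is excluded by the hypothesis on $\omegar(G)$. The separator in alternative (i) then has the claimed size. Throughout, set $\lambda:=\frac{\eta}{2\eta+2}$; our target is $\ell\approx n^{\lambda}$.

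\textbf{Choice of parameters.} Let $\ell:=\lceil n^{\lambda}\rceil$ and $r:=\lceil 2\ell\log n\rceil$, so that $K_h$ being a $(2\ell\log n)$-shallow minor forces $K_h$ to be an $r$-shallow minor; shallowness is monotone in the radius. Put $h:=\lfloor B(r+1)^{1/\eta}\rfloor+1$. The hypothesis gives $\omegar(G)\le B(r+1)^{1/\eta}<h$, so $K_h$ is not an $r$-shallow minor of $G$. Therefore alternative (ii) of \Cref{PRS} fails, and $G$ has a balanced separator of size at most $n/\ell+2h^2\ell\log n$.

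\textbf{Bounding the two terms.} The first term satisfies $n/\ell\le n^{1-\lambda}$. For the second, we have $r+1\le 2\ell\log n+2\le 6 n^{\lambda}\log n$ when $n\ge2$, and $h\le 2B(r+1)^{1/\eta}$. Hence
\[
2h^2\ell\log n\le 8B^2(r+1)^{2/\eta}\ell\log n\le C'_\eta B^2\, n^{\lambda(2/\eta+1)}(\log n)^{2/\eta+1}.
\]
We then check the exponent: $\lambda(2/\eta+1)=\frac{\eta}{2\eta+2}\cdot\frac{2+\eta}{\eta}=\frac{2+\eta}{2\eta+2}=1-\frac{\eta}{2\eta+2}=1-\lambda$. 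This is exactly why $\lambda$ was chosen as $\frac{\eta}{2\eta+2}$: it balances the two terms of \Cref{PRS} up to logarithms.

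\textbf{Main obstacle: the log factors.} The computation above leaves $(\log n)^{2/\eta+1}$, whereas the statement allows only a single $\log n$. I would resolve this by inserting a polylogarithmic correction into $\ell$, taking $\ell:=\lceil n^{\lambda}(\log n)^{-\kappa}\rceil$ for a suitable $\kappa\ge 0$ depending on $\eta$. This choice inflates the first term to $n^{1-\lambda}(\log n)^{\kappa}$ and reduces the second term by a factor of $(\log n)^{\kappa(2/\eta+1)}$. To meet the target I need both powers to be at most $1$, which requires $\kappa\le1$ and $2/\eta+1-\kappa(2/\eta+1)\le1$, that is, $\kappa\ge\frac{2/\eta}{2/\eta+1}=\frac{2}{2+\eta}$. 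Taking $\kappa:=\frac{2}{2+\eta}<1$ satisfies both constraints simultaneously, so both terms become at most $O_\eta(B^2 n^{1-\lambda}\log n)$.

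\textbf{Remaining care.} Some bookkeeping is still needed. First, small $n$, for which $n^\lambda(\log n)^{-\kappa}<1$ or the ceiling distorts the bounds, should be absorbed into $C_\eta$, using $\mathsf{sep}(G)\le n$ and $B\ge1$. Second, integrality of $\ell,h$ must be handled. Third, if \Cref{PRS} requires $\ell\ge1$, this must be verified. In the worst case, if the log-factor balancing causes trouble, the stated bound still follows for large $n$ after slightly adjusting constants.
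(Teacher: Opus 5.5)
Your proposal is correct and follows the paper's proof. You apply \Cref{PRS} with $\ell\approx n^{\lambda}$ up to a logarithmic correction, set $r=\lceil 2\ell\log n\rceil$ and $h=\lfloor B(r+1)^{1/\eta}\rfloor+1$ so that alternative (ii) is excluded, and use $\lambda(2/\eta+1)=1-\lambda$. The only difference is the logarithmic correction. The paper takes $\ell=\lfloor n^{\lambda}/(2\log n)\rfloor$, which is $\kappa=1$, the other endpoint of your admissible window $[2/(2+\eta),1]$. This keeps the radius at most $n^{\lambda}+1$, so the second term carries no logarithm and only $n/\ell$ contributes the $\log n$. Your choice $\kappa=2/(2+\eta)$ works equally well, and your closing hedge is unnecessary.
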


\begin{proof}
Set $a:=\eta/(2\eta+2)$ and $\ell:=\left\lfloor n^a/(2\log n)\right\rfloor$.
For all sufficiently large $n$, we have $n^a/(4\log n) \le \ell \le n^a/(2\log n)$.
Set $R:=\left\lceil2\ell\log n\right\rceil$. Then $R\le n^a+1$. Now let
$h:=\left\lfloor B(R+1)^{1/\eta}\right\rfloor+1$.
Since $h>B(R+1)^{1/\eta}$, the assumption implies that $G$ has no
$R$-shallow $K_h$ minor. As $2\ell\log n\le R$, the graph $G$ has no
$(2\ell\log n)$-shallow $K_h$ minor either. Hence, by \Cref{PRS}, $G$ has a balanced separator of size at most
\[
  \frac n\ell+2h^2\ell\log n.
\]
The first term satisfies 
\[
  \frac{n}{\ell}
  \le 4n^{1-a}\log n.
\]
Moreover, since $R\le n^a+1$, we have $h=O_\eta(Bn^{a/\eta})$. Using $\ell\le n^a/(2\log n)$, we then obtain
\[
  2h^2\ell\log n
  =O_\eta\!\left(B^2n^{2a/\eta+a}\right)
  =O_\eta\!\left(B^2n^{1-a}\right).
\]
Consequently, there exist constants $N_\eta\ge2$ and $C_\eta>0$, depending only on $\eta$, such that
\[
  \mathsf{sep}(G)\le C_\eta B^2 n^{1-a}\log n
\]
whenever $n\ge N_\eta$. For $2\le n<N_\eta$, the trivial separator $V(G)$ gives $\mathsf{sep}(G)\le n$. Since $B\ge1$, increasing $C_\eta$ if necessary makes the same bound valid for every $n\ge2$.
\end{proof}

Combining \Cref{shallow-clique} with \Cref{shallow-to-separator} gives the following bootstrap statement, which might be of independent interest. The strongly sublinear exponent coming from the clique-$6$ slice propagates, up to a fixed loss, to every bounded-clique slice.

\begin{proposition}\label{bootstrap}
Let $\C$ be a hereditary class. Suppose that there exist $c\ge 1$ and
$\eta\in(0,1]$ such that every $n$-vertex graph in $\C_{\le6}$ has a
balanced separator of size at most $c n^{1-\eta}$. Suppose further that, for every $q\in \mathbb{N}$, there exists $d_q \in \mathbb{N}$ such that every graph in $\C_{\le q}$ is $d_q$-degenerate. Then, for every $q\in\mathbb{N}$, every $n$-vertex graph in $\C_{\le q}$ with $n \geq 2$ has a balanced separator of size \[O_q\!\left(n^{1-\frac{\eta}{2(1+\eta)}}\log n\right).\]
\end{proposition}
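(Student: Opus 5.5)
The plan is to chain \Cref{shallow-clique} into \Cref{shallow-to-separator}. Fix $q\in\mathbb{N}$ and let $G\in\C_{\le q}$ be a graph on $n\ge2$ vertices. The hypotheses of \Cref{bootstrap} are exactly those of \Cref{shallow-clique}: a separator bound $cn^{1-\eta}$ on $\C_{\le6}$, and $d_q$-degeneracy of $\C_{\le q}$. Applying \Cref{shallow-clique} to $G$ therefore gives
\[
\omegar(G)\le B_q(r+1)^{1/\eta}\quad\text{for every } r\ge0,
\]
where $B_q:=C_{c,\eta}(1+d_q^6)$. Since $C_{c,\eta}\ge1$, we have $B_q\ge1$, and $B_q$ depends only on $q$ (through $d_q$) and on the fixed constants $c,\eta$.

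Next I would apply \Cref{shallow-to-separator} to $G$ with $B:=B_q$ and the same $\eta\in(0,1]$. This gives
\[
\mathsf{sep}(G)\le C_\eta B_q^2\, n^{1-\frac{\eta}{2\eta+2}}\log n .
\]
The exponent satisfies $\frac{\eta}{2\eta+2}=\frac{\eta}{2(1+\eta)}$, which is the one in the statement. The prefactor $C_\eta B_q^2$ depends only on $q$ once $c$ and $\eta$ are fixed, so it is absorbed into the $O_q(\cdot)$ notation.

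There is no real obstacle here: all the work was done in \Cref{induced-sparsification}, \Cref{shallow-clique} and \Cref{shallow-to-separator}. The only points to check are bookkeeping. First, \Cref{shallow-clique} must be used at the given $q$; its statement fixes an arbitrary $q$ inside the proof, so this is fine. Second, $B_q\ge1$ and $n\ge2$ are needed to meet the hypotheses of \Cref{shallow-to-separator}, and both hold.
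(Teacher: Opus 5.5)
Your proposal is correct and is essentially the paper's own proof: the paper also obtains $\omega_r(G)\le B_q(r+1)^{1/\eta}$ for all $r\ge0$ and $G\in\C_{\le q}$ from \Cref{shallow-clique}, and then applies \Cref{shallow-to-separator} with $B=B_q$. Your bookkeeping ($B_q\ge1$, $n\ge2$, and the prefactor depending only on $q$ once $c,\eta$ are fixed) covers exactly the hypotheses that need checking.
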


\begin{proof}
Fix $q\in\mathbb{N}$. By \Cref{shallow-clique}, there exists $B_q\ge1$
such that $\omegar(G)\le B_q(r+1)^{1/\eta}$ for every $r\ge0$ and every $G\in\C_{\le q}$. Finally, apply \Cref{shallow-to-separator} with $B=B_q$. The claimed bound follows.
\end{proof}

Under the assumptions of \Cref{bootstrap}, for every fixed $\delta< \eta/(2\eta+2)$, there is a constant $C_{q,\delta}$ such that every $n$-vertex graph in $\C_{\le q}$ has a balanced separator of size at most $C_{q,\delta}n^{1-\delta}$. However, for the application to clique-based separators, a uniform power saving in $|V(G)|$ is not enough: we also need quantitative control of the multiplicative dependence on $\omega(G)$. Since the bounded-clique slices have bounded degeneracy, polynomial $(\dg,\omega)$-boundedness gives $\dg(G)\le p(\omega(G))$ for a fixed polynomial $p$. Retaining this dependence in \Cref{shallow-clique} and in the Plotkin--Rao--Smith step makes the separator coefficient polynomial in $\omega(G)$ as well, as we show next.

\begin{theorem}\label{poly-coefficient}
Let $\C$ be a hereditary class. Suppose that, for every $q\in\mathbb{N}$, there exist $c_q\ge1$ and $\eta_q\in(0,1]$ such that every $n$-vertex graph in $\C_{\le q}$ has a balanced separator of size at most $c_q n^{1-\eta_q}$. Then there exist constants $K\ge1$, $\beta>0$, and an integer $s\ge0$ such that, for every $G\in\C$, 
\begin{equation*}
\mathsf{sep}(G) \leq K\omega(G)^s |V(G)|^{1-\beta}.
\end{equation*}
\end{theorem}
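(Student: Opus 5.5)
The plan is to chain three earlier results: \Cref{shallow-clique}, \Cref{shallow-to-separator}, and \Cref{poly-dgomega}. The key point is to keep the dependence on $q$ explicit throughout.

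\emph{Step 1: degeneracy polynomial in $\omega$.} For each $q$, the slice $\C_{\le q}$ is hereditary and has separators of size at most $c_q n^{1-\eta_q}$. By \Cref{degeneracy-from-sep}, every graph in $\C_{\le q}$ is $d'_q$-degenerate for some integer $d'_q$. Since $\C$ is hereditary, this says $\C$ is $(\dg,\omega)$-bounded. By \Cref{poly-dgomega} there are constants $A\ge1$ and $s_0\ge0$ with $\dg(H)\le A\,\omega(H)^{s_0}$ for every $H\in\C$. We may therefore take $d_q:=\lceil Aq^{s_0}\rceil$ as the degeneracy bound on $\C_{\le q}$, uniformly polynomial in $q$.

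\emph{Step 2: shallow clique minors.} Fix $c:=c_6$ and $\eta:=\eta_6$; these are now absolute constants for the class. \Cref{shallow-clique}, applied with these $d_q$, gives for every $G\in\C$ with $\omega(G)=q$ and every $r\ge0$
\[
\omegar(G)\le C_{c,\eta}\bigl(1+d_q^6\bigr)(r+1)^{1/\eta}\le B_0\,\omega(G)^{6s_0}(r+1)^{1/\eta},
\]
where $B_0$ is a constant independent of $G$ (use $q\ge1$). This step is the heart of the argument: the exponent $1/\eta$ comes from the single slice $\C_{\le6}$, so it is uniform in $q$. Only the prefactor depends on $q$, and it does so polynomially.

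\emph{Step 3: convert to a separator bound.} Apply \Cref{shallow-to-separator} with $B:=B_0\omega(G)^{6s_0}\ge1$. For $n:=|V(G)|\ge2$ this gives
\[
\mathsf{sep}(G)\le C_\eta B_0^2\,\omega(G)^{12s_0}\,n^{1-\frac{\eta}{2\eta+2}}\log n .
\]
To remove the logarithm, set $\beta:=\eta/(4\eta+4)$ and use $\log n\le C'n^{\eta/(4\eta+4)}$ for a constant $C'$ depending only on $\eta$. This yields $\mathsf{sep}(G)\le K\omega(G)^{s}n^{1-\beta}$ with $s:=12s_0$ and $K:=\max\{1,C_\eta C' B_0^2\}$, taking $s$ to be an integer by rounding $s_0$ up. For $n\le1$ the bound is trivial: the empty set is a balanced separator when $n=1$ (the single component has $1\le 2/3\cdot1$? no), so instead use $\mathsf{sep}(G)\le n\le K$.

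There is one case to watch. If $\C_{\le6}$ fails the hypothesis, it is only because the hypothesis was stated for every $q$, which already covers $q=6$, so no extra case arises. The main obstacle is the bookkeeping in Step 2: one must check that the constant in \Cref{shallow-clique} depends only on $(c,\eta)$ and enters through $d_q$ only via $1+d_q^6$. Once that is confirmed, polynomiality in $\omega$ follows directly from \Cref{poly-dgomega}.
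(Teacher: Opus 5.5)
Your proposal is correct and follows the paper's proof essentially step for step. \Cref{degeneracy-from-sep} plus \Cref{poly-dgnomega} give polynomial $(\dg,\omega)$-boundedness. \Cref{shallow-clique}, applied with the single-slice constants $c_6,\eta_6$, gives a shallow-clique bound whose exponent is uniform and whose prefactor is polynomial in $\omega(G)$. \Cref{shallow-to-separator} then finishes, after absorbing the $\log n$ factor into a slightly smaller exponent; your explicit $\beta=\eta_6/(4\eta_6+4)$ is one admissible choice of the paper's arbitrary $\beta<\eta_6/(2\eta_6+2)$.

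The only blemish is the garbled aside about $n=1$; the empty set is indeed not a balanced separator there. Your fallback $\mathsf{sep}(G)\le n\le K$ handles $n\le 1$ correctly.
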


\begin{proof} By \Cref{degeneracy-from-sep}, for every $q\ge1$ there exists an integer $e_q$ such that every graph in $\C_{\le q}$ is $e_q$-degenerate. It follows that $\C$ is $(\dg,\omega)$-bounded and so, by \Cref{poly-dgnomega}, polynomially $(\dg,\omega)$-bounded. Hence, there exists a polynomial $g$ such that $\dg(G)\le g(\omega(G))$ for every $G\in\C$. We may assume that $g$ is nondecreasing on $[0,\infty)$. Apply now \Cref{shallow-clique} with $d_q:=\lceil g(q)\rceil$. We obtain a constant $C_{c_6,\eta_6}\ge1$ such that, with the fixed nondecreasing
polynomial $f(x):=C_{c_6,\eta_6}(1+(g(x)+1)^6)$,
we have
\begin{equation*}
  \omegar(G)\le f(q)(r+1)^{\frac{1}{\eta_6}},
\end{equation*}
for every $q\ge1$, every $G\in\C_{\le q}$, and every $r\ge0$.

Let $G\in\C$ be a graph on $n\ge2$ vertices and set $q:=\omega(G)$. Applying
\Cref{shallow-to-separator} with $B=f(q)$ gives
\[
  \mathsf{sep}(G) \le C_{\eta_6} f(q)^2 n^{1-\frac{\eta_6}{2\eta_6+2}}\log n,
\]
for some constant $C_{\eta_6}\geq1$ depending only on $\eta_6$. But then, for every fixed $0 < \beta < \eta_6/(2\eta_6+2)$ and sufficiently large $n$, the right-hand side is at most a fixed polynomial in $q$ times $n^{1-\beta}$. Since every fixed polynomial in $q$ is at most $Kq^s$ for suitable $K\ge1$ and $s\in\mathbb{N}$, increasing the constant if necessary gives the desired bound.
\end{proof}

We conclude this section with some consequences of \Cref{poly-coefficient} (in fact, these can be directly deduced from \Cref{bootstrap} as well). By \Cref{degeneracy-from-sep}, hereditarily admitting strongly sublinear separators implies bounded degeneracy. Therefore, \Cref{poly-coefficient} immediately gives the following uniformization result.

\begin{corollary}\label{uniformization} Let $\C$ be a hereditary class. Suppose that, for every $q\in\mathbb{N}$, there exist $c_q\ge1$ and $\varepsilon_q\in(0,1]$ such that every $n$-vertex graph in $\C_{\le q}$ has a balanced separator of size at most $c_q n^{1-\varepsilon_q}$.
Then there exists a constant $\varepsilon > 0$ such that, for every $q\in\mathbb{N}$, there is a constant $C_q$ such that every $n$-vertex graph in $\C_{\le q}$ has a balanced separator of size at most $C_q n^{1-\varepsilon}$.
\end{corollary}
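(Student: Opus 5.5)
This follows directly from \Cref{poly-coefficient}, whose hypothesis is exactly ours: each slice $\C_{\le q}$ has balanced separators of size at most $c_q n^{1-\varepsilon_q}$. That theorem gives constants $K\ge1$, $\beta>0$ and an integer $s\ge0$ such that every $G\in\C$ satisfies $\mathsf{sep}(G)\le K\omega(G)^s|V(G)|^{1-\beta}$. I will take $\varepsilon:=\beta$. It does not depend on $q$, since $\beta$ is fixed once and for all by the class.

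Next, fix $q\in\mathbb N$ and let $G\in\C_{\le q}$ have $n$ vertices. Then $\omega(G)\le q$, so $\omega(G)^s\le q^s$ because $s\ge0$. The bound therefore becomes $\mathsf{sep}(G)\le Kq^s n^{1-\beta}$, and we can set $C_q:=Kq^s$. The only formal points are the degenerate cases. For $n\le1$ the empty set or a single vertex is trivially a balanced separator. For the edgeless graph ($\omega=1$), or the null graph, the bound holds trivially or by convention, and any such gaps are absorbed by enlarging $C_q$, say to $\max\{Kq^s,1\}$.

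There is essentially no obstacle here. All the work, namely the sparsification of \Cref{induced-sparsification} and the shallow-minor argument in \Cref{shallow-clique}, is already inside \Cref{poly-coefficient}. The only thing to check is that the exponent $\beta$ it produces is independent of $q$, which it is by construction: $\beta$ depends only on $\eta_6$.

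Alternatively, as remarked before the statement, one can argue from \Cref{bootstrap}. \Cref{degeneracy-from-sep} supplies the degeneracy bounds $d_q$. Applying \Cref{bootstrap} with $\eta:=\varepsilon_6$ and $c:=c_6$ yields separators of size $O_q(n^{1-\eta/(2+2\eta)}\log n)$. Taking any $\varepsilon<\eta/(2+2\eta)$ absorbs the $\log n$ factor into the constant $C_q$.
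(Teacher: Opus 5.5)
Your proposal is correct and follows the paper's own route. The paper derives the corollary immediately from \Cref{poly-coefficient}, with $\varepsilon:=\beta$ and $C_q:=Kq^s$, and it also remarks on the alternative derivation via \Cref{bootstrap} (with \Cref{degeneracy-from-sep} supplying the degeneracy bounds) that you sketch.
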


The same conclusion holds in treewidth language thanks to \Cref{DNsep}.

\begin{corollary}\label{treewidth}
Let $\C$ be a hereditary class. Suppose that, for every $q\in\mathbb{N}$, there exist $a_q\ge1$ and $\varepsilon_q\in(0,1]$ such that $\tw(G)\le a_q |V(G)|^{1-\varepsilon_q}$ for every $G\in\C_{\le q}$. Then there exists a constant $\varepsilon>0$ such that, for every $q\in\mathbb{N}$, there is a constant $b_q$ with $\tw(G)\le b_q |V(G)|^{1-\varepsilon}$ for every $G\in\C_{\le q}$.
\end{corollary}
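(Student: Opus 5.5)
The plan is to reduce \Cref{treewidth} to \Cref{uniformization} by translating treewidth into separators via \Cref{DNsep}, in both directions.

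\emph{From treewidth to separators.} Fix $q\in\mathbb{N}$ and let $G\in\C_{\le q}$ be an $n$-vertex graph. By \Cref{DNsep}, $\mathsf{sep}(G)\le\tw(G)+1\le a_q n^{1-\varepsilon_q}+1\le 2a_q n^{1-\varepsilon_q}$ for $n\ge1$. So every $n$-vertex graph in $\C_{\le q}$ has a balanced separator of size at most $c_q n^{1-\varepsilon_q}$ with $c_q:=2a_q$. This is exactly the hypothesis of \Cref{uniformization}. It yields a single $\varepsilon>0$ and constants $C_q$ such that every $n$-vertex graph in $\C_{\le q}$ has a balanced separator of size at most $C_q n^{1-\varepsilon}$. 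We may shrink $\varepsilon$ to lie in $(0,1]$ if needed.

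\emph{From separators back to treewidth.} Fix $G\in\C_{\le q}$ on $n$ vertices. To apply the upper bound of \Cref{DNsep} we must control $\sepnum(G)$, which concerns all subgraphs of $G$, not only induced ones. Here we use \Cref{subgraph-separator}. The class $\C_{\le q}$ is hereditary, since $\omega$ does not increase under taking induced subgraphs. The function $m\mapsto C_q m^{1-\varepsilon}$ is nondecreasing. Hence every subgraph $H$ of $G$ has a balanced separator of size at most $C_q|V(H)|^{1-\varepsilon}\le C_q n^{1-\varepsilon}$. Therefore $\sepnum(G)\le C_q n^{1-\varepsilon}$, and \Cref{DNsep} gives $\tw(G)\le 15C_q n^{1-\varepsilon}$. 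Setting $b_q:=15C_q$ completes the argument. The case of the null graph is trivial.

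The only point needing care is the second step. \Cref{DNsep} is phrased via the separation number over all subgraphs, whereas \Cref{uniformization} only gives separators for graphs in the (induced-)hereditary slice. \Cref{subgraph-separator} bridges this gap, and it applies precisely because each slice $\C_{\le q}$ is itself hereditary. Everything else is direct substitution.
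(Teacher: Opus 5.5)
Your proposal is correct and follows the paper's proof essentially step for step. You convert the treewidth bound into a separator bound via \Cref{DNsep}, apply \Cref{uniformization}, then use \Cref{subgraph-separator} on the hereditary slice $\C_{\le q}$ to bound $\sepnum(G)$, and return to treewidth with the upper bound in \Cref{DNsep}. Your remarks that the observation is applied to $\C_{\le q}$ and that $\varepsilon\le 1$ keeps $m\mapsto C_q m^{1-\varepsilon}$ nondecreasing make explicit what the paper leaves implicit.
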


\begin{proof}
Let $G\in\C_{\le q}$ be a nonempty graph. \Cref{DNsep} gives $\mathsf{sep}(G)\le\tw(G)+1 \le (a_q+1)|V(G)|^{1-\varepsilon_q}$. Thus, \Cref{uniformization} yields a uniform exponent $\varepsilon>0$ for balanced separators. Fix now $q$ and $G\in\C_{\le q}$ with $n$ vertices. By \Cref{subgraph-separator}, every subgraph $H$ of $G$ has a balanced separator of size at most $C_qn^{1-\varepsilon}$. Hence, $\sepnum(G)\le C_qn^{1-\varepsilon}$, and another application of \Cref{DNsep} gives $\tw(G)\le15C_qn^{1-\varepsilon}$.
\end{proof}

\section{The proof of \Cref{equivalence}}\label{sec:theproof}

The proof of \Cref{equivalence} has two main ingredients besides \Cref{poly-coefficient}. First, polynomial dependence on the clique number as in $\textup{(iv)}$ allows us to convert ordinary separators into clique-based ones by peeling large cliques. Second, the passage from $\alpha$-separators to ordinary separators uses induced bipartite subgraphs to control degeneracy on every bounded-clique slice.

We begin with the clique-peeling argument showing $\textup{(iv)}\Rightarrow\textup{(v)}$. 

\begin{theorem}
\label{peeling}
Let $\C$ be a hereditary class. Suppose that there exist constants $K\ge1$, $\beta>0$, and $s\ge0$ such that every $G\in\C$ satisfies $\mathsf{sep}(G) \leq K\omega(G)^s|V(G)|^{1-\beta}$. Then there exist constants $c\ge1$ and $\zeta > 0$ such that every $G\in\C$ has a balanced clique-based separator $\mathcal S$ satisfying 
\[
  |\mathcal S|\le c|V(G)|^{1-\zeta}.
\]
\end{theorem}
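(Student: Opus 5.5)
The plan is the clique-peeling argument from the overview. Let $G\in\C$ have $n$ vertices. The case $n\le 1$ is trivial: take the empty collection, or the single singleton clique. Set
\[
\gamma:=\beta/(s+1)\quad\text{and}\quad q:=\lceil n^{\gamma}\rceil .
\]
We repeatedly pick a clique of size at least $q+1$ in the current induced subgraph and delete it. We stop when the remaining graph $R=G-P$ satisfies $\omega(R)\le q$, where $P$ is the union of the removed cliques $Q_1,\dots,Q_m$. These cliques are pairwise vertex-disjoint and each has more than $q\ge n^\gamma$ vertices, so
\[
m\le n/(q+1)< n^{1-\gamma}.
\]

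Since $\C$ is hereditary, $R\in\C$, so the hypothesis gives a balanced separator $S$ of $R$ with
\[
|S|\le K\omega(R)^s|V(R)|^{1-\beta}\le Kq^s n^{1-\beta}.
\]
Using $q\le 2n^\gamma$ (valid for $n\ge1$), this is at most $2^sKn^{s\gamma+1-\beta}$. The choice of $\gamma$ makes $s\gamma+1-\beta=1-\gamma$, so $|S|\le 2^sK n^{1-\gamma}$. If $R$ is empty, we take $S=\varnothing$.

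The candidate clique-based separator is
\[
\mathcal S:=\{Q_1,\dots,Q_m\}\cup\{\{v\}:v\in S\},
\]
a family of vertex-disjoint cliques of size at most $(1+2^sK)n^{1-\gamma}$. The step needing care is that $P\cup S$ is a balanced separator of $G$, not just that $S$ balances $R$. Every component of $G-(P\cup S)$ is a component of $R-S$, so it has at most $2|V(R)|/3\le 2n/3$ vertices. This works because deleting more vertices only splits components, and the balance threshold $2|V(R)|/3$ is at most $2n/3$.

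The statement therefore holds with $\zeta:=\gamma=\beta/(s+1)$ and $c:=1+2^sK$. The only genuine obstacle, tying the exponent of $\omega$ to the saving $\beta$, is resolved by the choice of $\gamma$.
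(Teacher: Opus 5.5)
Your proposal is correct and is essentially the paper's proof: the same choice $\gamma=\beta/(s+1)$ and $q=\lceil n^\gamma\rceil$, the same clique peeling and balance check, and the same constants $\zeta=\gamma$, $c=1+2^sK$. The one detail you skipped is that the inequality $|V(R)|^{1-\beta}\le n^{1-\beta}$ requires $\beta\le1$, so you should first replace $\beta$ by $\min\{\beta,1\}$, as the paper does at the start of its proof; this only weakens the hypothesis, since $|V(G)|^{1-\beta}\le 1$ for nonempty $G$ when $\beta>1$.
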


\begin{proof} By decreasing $\beta$ if necessary, we may assume that
$0<\beta\le1$. Set $\gamma:=\beta/(s+1) > 0$.
Let $G\in\C$ be a graph on $n\ge1$ vertices, and set $q:=\lceil n^\gamma\rceil$. Starting with $G$, repeatedly choose a clique of size at least $q+1$ in the current induced graph and delete all of its vertices, stopping when no such clique remains. Let the deleted cliques be $C_1,\ldots,C_k$, and let $X:=\bigcup_{i=1}^k C_i$, and $R:=G-X$.
The cliques $C_i$ are pairwise vertex-disjoint and each has at least $q+1$
vertices. Hence,
\begin{equation}\label{eq:number-peeled-cliques}
  k\le\frac{n}{q+1}\le n^{1-\gamma}.
\end{equation}
Moreover, $\omega(R)\le q$ by the stopping condition.

If $R$ is empty, then $\{C_1,\ldots,C_k\}$ is trivially a clique-based
separator of $G$ of size at most $n^{1-\gamma}$. Assume henceforth that $R$ is nonempty. Since $R$ is an induced subgraph of $G$, it belongs to $\C$. Choose a balanced separator $S_0$ of $R$ with
\[
  |S_0|
  \le K\omega(R)^s|V(R)|^{1-\beta}
  \le Kq^s n^{1-\beta}
  \le K2^s n^{1-\beta+s\gamma},
\]
where the last inequality follows from $q\le n^\gamma+1\le2n^\gamma$. Since the definition of $\gamma$ gives $1-\beta+s\gamma=1-\gamma$, we then obtain \begin{equation}\label{eq:S0-final}
  |S_0|\le K2^s n^{1-\gamma}.
\end{equation}

Now let $\mathcal S:=\{C_1,\ldots,C_k\}\cup\{\{v\}:v\in S_0\}$.
Since $S_0\subseteq V(R)=V(G)\setminus X$, the members of $\mathcal S$ are
pairwise vertex-disjoint cliques. Their union is $X\cup S_0$, and every
component of $G-(X\cup S_0)=R-S_0$ has at most $2|V(R)|/3\le2n/3$ vertices. Hence, $\mathcal S$ is a clique-based separator of $G$. By \eqref{eq:number-peeled-cliques} and \eqref{eq:S0-final},
\[
  |\mathcal S|=k+|S_0|
  \le (1+K2^s)n^{1-\gamma}.
\]
Thus the conclusion holds with $\zeta:=\gamma$ and $c:=1+K2^s$.
\end{proof}

As mentioned, to close the implication cycle in \Cref{equivalence}, we will also need to recover ordinary strongly sublinear separators from strongly sublinear $\alpha$-separators on every bounded-clique slice. The following result allows us to transfer the bounded degeneracy obtained on the bipartite members of the class to all bounded-clique members.

\begin{theorem}[\citet{KLST20}]\label{induced-bipartite} For every graph $H$, there is a constant $c_H > 0$ such that every $H$-subgraph-free graph of minimum degree at least $t$ contains an induced bipartite subgraph of minimum degree at least $c_H\log t/\log\log t$.
\end{theorem}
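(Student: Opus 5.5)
Since the statement is only cited from \citet{KLST20}, I will not reprove it in full. The following is the plan I would follow.

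\textbf{Reduction to cliques.} Every graph $H$ is a subgraph of $K_h$ with $h:=|V(H)|$. Hence an $H$-subgraph-free graph is $K_h$-free, and it suffices to prove the statement for $H=K_h$. I would then argue by induction on $h$. The base case $h=2$ is vacuous, since a $K_2$-free graph with $t\ge1$ has no edges. The case $h=3$ is the model case: every neighbourhood is an independent set.

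\textbf{Regularisation.} Let $G$ be $K_h$-free with $\delta(G)\ge t$. I would first pass to a subgraph $G'$ whose degrees all lie between $t'$ and $\Delta'$, with $t'\ge t^{\Omega(1)}$ and $\Delta'\le (t')^{O(1)}$. A standard Erdős--Simonovits-type almost-regularisation gives this, possibly at the cost of replacing $t$ by a power of itself; such a loss is harmless because the target bound is only $\log t/\log\log t$. Since $G'$ is only a subgraph, induced subgraphs of $G'$ need not be induced in $G$. So at the end I would take induced subgraphs of $G$ on the chosen vertex set, which requires controlling the edges of $G$ rather than of $G'$.

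\textbf{Multi-scale random construction.} The core is to build disjoint vertex sets $A$ and $B$ with both $G[A]$ and $G[B]$ independent, and every vertex of $A\cup B$ having at least $k\approx c\log t/\log\log t$ neighbours on the other side.
\begin{itemize}
\item \emph{Scales.} I would split the degree range $[t',\Delta']$ into about $k$ scales, each spanning a factor of about $(\log t)^{O(1)}$. Pigeonholing then yields a scale at which enough vertices have many neighbours of comparable degree. This is where the factor $\log\log t$ in the denominator comes from.
\item \emph{Sampling $A$.} At the chosen scale, I would sample $A$ randomly at a density $p$ such that a typical vertex has about $k$ neighbours in $A$.
\item \emph{Making $A$ independent.} I would remove from $A$ the vertices incident to edges inside $A$. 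Because $p$ is tiny compared with the degrees, this deletes only a small fraction of $A$.
\item \emph{Choosing $B$.} I would take $B$ to consist of vertices outside $A$ with at least $k$ neighbours in $A$, and then make $B$ independent.
\end{itemize}

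\textbf{Main obstacle: making $B$ independent while keeping degrees into $A$.} Two vertices of $B$ may well be adjacent, and deleting them could destroy the degrees of vertices of $A$. The $K_h$-freeness is what rescues this step. A vertex $v$ of $B$ has neighbourhood $N(v)$ that is $K_{h-1}$-free. So either, by Ramsey's theorem, $N(v)$ contains a large independent set, or the induction hypothesis for $K_{h-1}$ applies inside $N(v)$.

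I would try to choose $B$ randomly among the candidates with a second independent density, and use the Lovász local lemma or an alteration argument to delete conflicting pairs. Near-regularity keeps every vertex's dependencies polynomially bounded, which is what the local lemma needs. Balancing the two densities against the number of scales should give minimum degree $\Omega_h(\log t/\log\log t)$ on both sides of $G[A\cup B]$.
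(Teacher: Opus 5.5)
The paper gives no proof of this statement. It is imported from Kwan, Letzter, Sudakov and Tran and used only qualitatively (with $H=K_{q+1}$) in \textup{(viii)}$\Rightarrow$\textup{(ii)}, so declining to reprove it matches the paper. Your reduction to $H=K_h$ is correct. The plan after it, however, has a genuine gap, and the first step that fails is the construction of $A$. Test it on the model case your regularisation aims at: a $d$-regular triangle-free graph, where there is only one degree scale and the pigeonholing does nothing. Suppose you choose $p$ with $pd\approx k$, so that a typical vertex has about $k$ neighbours in $A$. Then a vertex of $A$ also has a binomial number of neighbours in $A$ with mean $k$, and has none with probability about $e^{-k}$. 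Deleting the vertices incident to edges inside $A$ therefore removes all but an $e^{-\Theta(k)}$-fraction of $A$. A vertex that had about $k$ neighbours in $A$ keeps, in expectation, about $ke^{-\Theta(k)}<1$ of them. The justification ``$p$ is tiny compared with the degrees'' compares the wrong quantities: what matters is $p$ times the degree, which you set to $k$. Keeping $A$ nearly independent forces $pd\ll1$. Then $B$ must consist of the rare vertices with at least $k$ neighbours in a sparse $A$. Balancing this rarity against the requirement that every vertex of $A$ gets $k$ neighbours in an independent $B$ is the real content of the theorem, and your plan does not address it.

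The step for $B$ is not resolved either. ``Ramsey or induction inside $N(v)$'' is not a dichotomy: a $K_{h-1}$-free set of $t$ vertices always contains an independent set of size $\Omega_h(t^{1/(h-2)})$. Moreover, neither branch says anything about edges between two vertices of $B$, which is the obstacle you identified. The local lemma cannot be run with your parameters. When the expected degree into the other side is $O(k)$, a bad event (too few such neighbours) has probability at least $e^{-O(k)}=t^{-O(1/\log\log t)}$. Each such event shares randomness with at least $t'-1$ others, where $t'=t^{\Omega(1)}$, so the condition $eP(D+1)\le1$ fails ($P$ the bound on event probabilities, $D$ the dependency degree). Polynomially bounded dependencies would need polynomially small failure probabilities, hence expected degrees of order $\log t$. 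That is incompatible with the independence requirement shown above. Further, near-regularity holds only in the subgraph $G'$, while $A$ and $B$ must be independent in $G$, whose degrees are uncontrolled. A vertex of huge $G$-degree almost surely has a $G$-neighbour in $A$, so neither your deletion estimates nor your dependency bounds apply to the edges that matter. Finally, the regularisation itself is not a consequence of the Erd\H{o}s--Simonovits lemma. That lemma needs $e(G)\ge n^{1+\alpha}$ for a \emph{fixed} $\alpha>0$, with constants depending on $\alpha$, whereas $t$ may be smaller than every power of $n$. So the proposal locates the difficulty but does not contain the idea that overcomes it.
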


We can finally prove \Cref{equivalence}, which we restate for convenience. 

\equivalence*

\begin{proof}
$\textup{(i)} \Leftrightarrow \textup{(ii)}$. It immediately follows from \Cref{subgraph-separator} and the equivalence between polynomial expansion and strongly sublinear separators due to \citet[Corollary~2 and Theorem~3]{DN16}.

$\textup{(ii)} \Leftrightarrow \textup{(iii)}$. If \textup{(iii)} holds, using $\mathsf{sep}(G)\le \tw(G)+1$, we immediately obtain \textup{(ii)}. Conversely, suppose that \textup{(ii)} holds and fix $q\in\mathbb N$. Then there exist constants $c_q\ge1$ and $\varepsilon_q>0$ such that every $n$-vertex graph in $\C_{\le q}$ has a balanced separator of size at most $c_qn^{1-\varepsilon_q}$. By \cref{subgraph-separator}, the same bound holds for every subgraph of every graph in $\C_{\le q}$. Hence, $\sepnum(G)\le c_q|V(G)|^{1-\varepsilon_q}$ for every $G\in\C_{\le q}$. Applying the inequality $\tw(G)\le15\sepnum(G)$ (see \Cref{DNsep}) gives \textup{(iii)}.

$\textup{(ii)}\Rightarrow\textup{(iv)}$. Let $f\colon\mathbb N\to\mathbb R_{>0}$ and $g\colon\mathbb N\to(0,1]$ be as in the definition of strongly sublinear $(\mathsf{sep},\omega)$-boundedness. For each $q\in\mathbb{N}$, set $c_q^*:=\max\{\{1\}\cup\{f(j):1\le j\le q\}\}$, and $\varepsilon_q^*:=\min_{1\le j\le q}g(j)$. Hence, if $H\in\C_{\le q}$ and $j=\omega(H)$, then $f(j)\le c_q^*$ and $g(j)\ge\varepsilon_q^*$. It follows that $\mathsf{sep}(H) \le f(j)|V(H)|^{1-g(j)} \le c_q^*|V(H)|^{1-\varepsilon_q^*}$,
and so every fixed-clique slice has strongly sublinear separators. It is then enough to apply \Cref{poly-coefficient}.

$\textup{(iv)}\Rightarrow\textup{(v)}$. This is just \Cref{peeling}.

$\textup{(v)}\Rightarrow\textup{(vi)}$. Let $c\ge1$ and $\zeta>0$ be such that every $n$-vertex graph in $\C$ has a clique-based separator consisting of at most $cn^{1-\zeta}$ cliques. Then the weight of such a separator is at most $cn^{1-\zeta}\log(n+1)=O(n^{1-\zeta/2})$.

$\textup{(v)} \Rightarrow \textup{(vii)}$. Let $c\ge1$ and $0 < \zeta < 1$ be as in (v) and consider the nondecreasing function $h(n):=cn^{1-\zeta}$.
Then $h\left(\left\lfloor 2n/3\right\rfloor\right) \le \left(2/3\right)^{1-\zeta}h(n)$, and $(2/3)^{1-\zeta}<1$. Hence we can apply \cite[Lemma~1.1]{DMMY25}, which gives $\tin(G)=O(|V(G)|^{1-\zeta})$ for every $G\in\C$.

$\textup{(vii)} \Rightarrow \textup{(viii)}$. There exist $c\ge1$ and
$\zeta>0$ such that $\tin(G)\le c|V(G)|^{1-\zeta}$ for every $G\in\C$. Thus $G$ has a tree decomposition in which every bag $X$ satisfies $\alpha(G[X])\le c|V(G)|^{1-\zeta}$. By \Cref{balanced-bag}, one of its bags $S$ is a balanced separator of $G$.

$\textup{(vi)}\Rightarrow\textup{(viii)}$. Let $\mathcal S$ be a clique-based separator of $G \in \C$ and let $S=\bigcup_{C\in\mathcal S}C$. 
Every independent set in $G[S]$ contains at most one vertex from each
member of $\mathcal S$, and hence $\alpha(G[S])\le|\mathcal S|$.
Moreover, since $\log(|C|+1)\ge1$ for every nonempty clique $C$, we have $|\mathcal S| \le \sum_{C\in\mathcal S}\log(|C|+1),$ where the sum is exactly the weight of $\mathcal{S}$.

$\textup{(viii)}\Rightarrow\textup{(ii)}$. Choose $c\ge1$ and $\varepsilon\in(0,1]$ such that every $n$-vertex graph $G\in\C$ has a balanced separator $S$ satisfying $\alpha(G[S])\le cn^{1-\varepsilon}$.
Consider the hereditary subclass of bipartite graphs in $\C$. If $G$
belongs to this subclass and $S$ is a separator as above, then $G[S]$ is
bipartite, and hence $|S|\le2\alpha(G[S]) \le2c|V(G)|^{1-\varepsilon}$.
Thus this subclass has strongly sublinear ordinary separators. By
\Cref{degeneracy-from-sep}, choose an integer $d$ such that
every bipartite graph in $\C$ is $d$-degenerate.

We claim that, for every $q\in\mathbb N$, there exists an integer $\delta_q$ such that every graph in $\C_{\le q}$ is $(\delta_q-1)$-degenerate. For $q=1$, every graph in $\C_{\le1}$ is edgeless, hence $0$-degenerate. Fix now $q\ge2$. By \Cref{induced-bipartite}, there exists a constant $c_q$ such that every $K_{q+1}$-free graph of minimum degree at least $t$ contains an induced bipartite subgraph of minimum degree at least $c_q\log t/\log\log t$. Since $\log t/\log\log t$ tends to infinity with $t$, choose an integer $\delta_q$ such that $c_q \log \delta_q/\log\log \delta_q\ge d+1$. Suppose, to the contrary, that some $G\in\C_{\le q}$ contains an induced subgraph $H$ with $\delta(H)\ge \delta_q$. Since $\omega(H)\le q$, the graph $H$ is $K_{q+1}$-free, and hence it contains an induced bipartite subgraph $J$ with $\delta(J)\ge d+1$. Since $J\in\C$, this contradicts the choice of $d$. 

Take now $G\in\C_{\le q}$ and let $S$ be a balanced separator satisfying $\alpha(G[S])\le c|V(G)|^{1-\varepsilon}$. Since $G[S]\in\C_{\le q}$, the graph $G[S]$ is $(\delta_q-1)$-degenerate and hence $\delta_q$-colorable. Since each color class is an independent set in $G[S]$, we obtain $|S|
  \le\delta_q\alpha(G[S])
  \le \delta_q c|V(G)|^{1-\varepsilon}$.
Thus, every fixed-clique slice $\C_{\le q}$ has strongly sublinear
ordinary separators, with the same exponent $\varepsilon$, and so \textup{(ii)} holds. This completes the proof.
\end{proof}

\begin{corollary}\label{alpha-separators-poly-dgomega}
Let $\mathcal C$ be a hereditary class with strongly sublinear $\alpha$-separators. Then $\mathcal C$ is polynomially $(\dg,\omega)$-bounded.
\end{corollary}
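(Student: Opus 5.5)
The plan is to read the corollary off the implication (viii)$\Rightarrow$(ii) established in the proof of \Cref{equivalence}, and then invoke \Cref{poly-dgomega}. That argument already gives the key intermediate fact. Let $\C$ be hereditary with strongly sublinear $\alpha$-separators, say $\sepalpha(G)\le c|V(G)|^{1-\varepsilon}$ for every $G\in\C$.

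\emph{Step 1: degeneracy of the bipartite members.} For a bipartite $G\in\C$ and an $\alpha$-separator $S$ we have $|S|\le 2\alpha(G[S])$. Hence the hereditary subclass of bipartite members of $\C$ has strongly sublinear ordinary separators. By \Cref{degeneracy-from-sep}, this subclass is $d$-degenerate for some fixed $d$.

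\emph{Step 2: degeneracy of each bounded-clique slice.} For each $q\ge 2$, suppose some $G\in\C_{\le q}$ had an induced subgraph $H$ with large minimum degree. Since $H$ is $K_{q+1}$-free, \Cref{induced-bipartite} applied with forbidden subgraph $K_{q+1}$ gives an induced bipartite subgraph $J$ of $H$ with $\delta(J)\ge d+1$. But $J\in\C$ because $\C$ is hereditary, which contradicts Step 1. So there is $\delta_q$ such that every graph in $\C_{\le q}$ is $(\delta_q-1)$-degenerate; the case $q=1$ is trivial since such graphs are edgeless. This gives a function $f$ with $\dg(G)\le f(\omega(G))$ for all $G\in\C$.

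\emph{Step 3: hereditary version and polynomialization.} The definition of $(\dg,\omega)$-boundedness asks for $\dg(H)\le f(\omega(H))$ for every induced subgraph $H$ of a member. This is automatic because $\C$ is hereditary, so $H\in\C_{\le \omega(H)}$. Thus $\C$ is $(\dg,\omega)$-bounded, and \Cref{poly-dgomega} upgrades the binding function to a polynomial.

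There is no real obstacle here. The only substantive input beyond the equivalence proof is the external polynomialization theorem (\Cref{poly-dgomega}). The one point needing care is that \Cref{induced-bipartite} must be applied with $H=K_{q+1}$, which yields a constant depending only on $q$, and that is all Step 2 needs.
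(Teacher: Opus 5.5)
Your proposal is correct and follows the paper's proof. The paper obtains the slice-wise degeneracy bound $\delta_q$ from the (viii)$\Rightarrow$(ii) argument in exactly your way: the bipartite members are $d$-degenerate, and then \Cref{induced-bipartite} is applied with $K_{q+1}$. It then upgrades the resulting binding function $q\mapsto\delta_q-1$ to a polynomial via \Cref{poly-dgomega}.
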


\begin{proof}
By the proof of $\textup{(viii)}\Rightarrow\textup{(ii)}$ in
\Cref{equivalence}, for every $q\in\mathbb N$ there exists an integer $\delta_q$ such that every graph in $\mathcal C_{\le q}$ is $(\delta_q-1)$-degenerate. Hence, the function $f$ defined by $f(q):=\delta_q-1$ is a $(\dg,\omega)$-binding function for $\mathcal C$. Finally, every $(\dg,\omega)$-bounded graph class is polynomially $(\dg,\omega)$-bounded by \Cref{poly-dgnomega}.
\end{proof}

Using the clique-peeling argument from \Cref{peeling}, we next show that the strongly sublinear tree-independence conclusion of \Cref{equivalence} can be made algorithmic: For every hereditary class satisfying the equivalent conditions therein, a tree decomposition with strongly sublinear independence number can be constructed in subexponential time.  

\algo*

\begin{proof}
Choose constants $a\ge1$ and $\delta>0$ such that $\tin(H)\le a|V(H)|^{1-\delta}$ for every $H\in\mathcal C$. By \Cref{alpha-separators-poly-dgomega} and \Cref{equivalence}, after increasing constants if necessary,
there exist $A\ge1$ and an integer $s\ge0$ such that, for every $H\in\mathcal C$,  
\[
  \chi(H)\le \dg(H)+1\le A\omega(H)^s.
\] 
By decreasing $\delta$ if necessary, we may assume that $0<\delta\le1/3$. Finally, set $\gamma:=\delta/(s+1)$.

Let $G\in\mathcal C$ be a graph on $n$ vertices and set $q:=\lceil n^\gamma\rceil$. Starting with $G$, repeatedly find and delete a clique of
size $q+1$, stopping when no such clique remains. Let $C_1,\ldots,C_k$ be the deleted cliques, $X:=\bigcup_{i=1}^k C_i$, and $R:=G-X$.
Then $\omega(R)\le q$ and 
\[
   k\le\frac{n}{q+1}\le n^{1-\gamma}.
\]
Since $\mathcal C$ is hereditary, $\tin(R)\le a|V(R)|^{1-\delta} \le an^{1-\delta}$, while $\chi(R)\le A\omega(R)^s\le Aq^s$. Moreover, for every graph $H$, we have $\tw(H)+1\le\chi(H)\cdot \tin(H)$ (see, e.g., \cite{DMS24}). 
Consequently,
\[
   \tw(R)+1
   \le Aa q^s n^{1-\delta}
   \le Aa2^s n^{1-\delta+s\gamma}
   = Aa2^s n^{1-\gamma}.
\]

\citet{Kor23} showed that there exists an algorithm that, given an $n$-vertex graph $G$ and an integer $k$, in time $2^{O(k)}n$ either outputs a tree decomposition of width at most $2k+1$, or concludes that $\tw(G)>k$. Applying this, we can construct a tree decomposition $\mathcal T=(T,\{B_t\}_{t\in V(T)})$ of $R$ of width $O(n^{1-\gamma})$ in time
$2^{O(n^{1-\gamma})}$. We may assume that $\mathcal T$ has at most $n$ bags. Adding the set $X$ to every bag gives a tree decomposition $(T,\{B_t\cup X\}_{t\in V(T)})$ of $G$. Since the sets $C_1,\ldots,C_k$ are cliques, $\alpha(G[X])\le k\le n^{1-\gamma}$. Therefore, for every $t\in V(T)$,
\[
   \alpha(G[B_t\cup X])
   \le |B_t|+\alpha(G[X])
   =O(n^{1-\gamma}).
\]

Finally, finding the $(q+1)$-cliques by exhaustive search takes $2^{O(n^\gamma\log n)}$ time. Since $\gamma\le1/3$, this is $2^{O(n^{1-\gamma})}$. Hence, the total running time is $2^{O(n^{1-\gamma})}$.
\end{proof}

We conclude this section with an example showing that there is no analogue of \Cref{equivalence} for the polylogarithmic regime. 

\begin{proposition}\label{polylog}
There exist a hereditary class $\mathcal C$ of bounded tree-independence number and a constant $c>0$ such that, for infinitely many $n$, some $n$-vertex graph $G\in\mathcal C$ has the property that every clique-based separator of $G$ contains at least $c\sqrt{n/\log n}$ cliques.
\end{proposition}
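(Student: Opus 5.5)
The plan is to take $\mathcal C$ to be the class of all graphs with independence number at most $2$, that is, complements of triangle-free graphs. This class is clearly hereditary. It has tree-independence number at most $2$, since the trivial tree decomposition with the single bag $V(G)$ already has independence number $\alpha(G)\le 2$. For the hard instances I will take complements of Ramsey-type triangle-free graphs: by Kim's bound $R(3,t)=\Theta(t^2/\log t)$, for infinitely many $n$ there is an $n$-vertex triangle-free graph $H_n$ with $\alpha(H_n)\le c_0\sqrt{n\log n}$, for an absolute constant $c_0$. Set $G_n:=\overline{H_n}\in\mathcal C$.

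Two observations drive the lower bound.
\begin{enumerate}
\item Every clique of $G_n$ is an independent set of $H_n$, so it has at most $\alpha(H_n)\le c_0\sqrt{n\log n}$ vertices.
\item Because $\alpha(G_n)\le2$, every induced subgraph of $G_n$ has at most two components, and each component is a clique (if not, a component contains two nonadjacent vertices, and together with a vertex of another component these give an independent set of size $3$). Concretely, if $G_n-S$ is disconnected, then it consists of exactly two cliques $A$ and $B$ with no edges between them.
\end{enumerate}

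Now let $\mathcal S$ be any clique-based separator of $G_n$ and let $S:=\bigcup\mathcal S$. I first show $|S|\ge n/3$ once $n$ is large.
\begin{itemize}
\item If $G_n-S$ is empty or connected, the balance condition forces $|V(G_n)\setminus S|\le 2n/3$, so $|S|\ge n/3$.
\item Otherwise $G_n-S=A\cup B$ with $A,B$ cliques, and each has at most $\alpha(H_n)$ vertices by the first observation. Hence $|S|\ge n-2c_0\sqrt{n\log n}\ge n/3$.
\end{itemize}

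Since every member of $\mathcal S$ is a clique, it has at most $c_0\sqrt{n\log n}$ vertices. Therefore
\[
|\mathcal S|\ \ge\ \frac{n/3}{c_0\sqrt{n\log n}}\ =\ \frac{1}{3c_0}\sqrt{\frac{n}{\log n}},
\]
which is the claimed bound with $c:=1/(3c_0)$.

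The only nontrivial input is the existence of triangle-free graphs with independence number $O(\sqrt{n\log n})$. This is Kim's lower bound on $R(3,t)$, and I would simply cite it. The structural step is the second observation, that components of graphs with $\alpha\le2$ are cliques and there are at most two of them; it is elementary. I do not expect a genuine obstacle beyond making sure the balance condition is handled in the case where $G_n-S$ is connected or empty, which the first bullet covers.
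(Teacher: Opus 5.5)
Your proof is correct and follows the paper's argument essentially verbatim: the same class (complements of triangle-free graphs, with $\tin\le 2$), the same use of Kim's bound, the same case split showing that every balanced separator has linear size (connected/empty versus exactly two components, each an independent set of $H_n$), and the same division by the maximum clique size. One wording slip in your second observation: when an induced subgraph has only one component, that component need not be a clique; the claim holds exactly when there are two components, which is the only case you use.
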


\begin{proof}
Let $\mathcal C$ be the class of complements of triangle-free graphs. Since every $G\in\mathcal C$ satisfies $\alpha(G)\le2$, we have $\tin(G)\le2$.

For infinitely many $n$, there exists an $n$-vertex triangle-free graph $H$ with $\alpha(H)=O(\sqrt{n\log n})$ \cite{Kim95}. Set $G:=\overline H$. We claim that every balanced separator $S$ of $G$ has size $\Omega(n)$. Clearly, we may assume that $S \neq V(G)$. If $G-S$ is connected, then the balance condition gives $|S|\ge n/3$. Otherwise, $G-S$ has two components, since three components would yield a triangle in $H$. Moreover, each such component is an independent set in $H$ (if two vertices of one component were adjacent in $H$, together with any vertex of the other component they would form a triangle). Hence, $|V(G)\setminus S|\le2\alpha(H)=o(n)$, and so again $|S|=\Omega(n)$.

Since every clique in $G$ has size $O(\sqrt{n\log n})$, every clique-based separator $\mathcal S$ of $G$ must then satisfy $|\mathcal S| = \Omega(n/\sqrt{n\log n})$.
\end{proof}

\section{Fractional tree-$\alpha$-fragility and $\alpha$-separators}
\label{sec:frac-tree-alpha-separators}

\citet{GMY24} asked whether every graph class of bounded layered tree-independence number has sublinear-weight clique-based separators. In this section, we show that something stronger holds: Every fractionally $\tin$-fragile class has sublinear-weight clique-based separators. 

The proof relies on the following $\alpha$-analogue of a general connection between fractional fragility and sublinear separators observed by \citet[Lemma~14]{Dvo16}.

\begin{theorem}
\label{fractional-treealpha-alpha-separator}
Let $\mathcal C$ be a fractionally $\tin$-fragile class with fragility function $f$. Then, for every integer $r\ge2$ and every $n$-vertex graph $G\in\mathcal C$, there exists a balanced separator $S$ of $G$ such that
\[
  \alpha(G[S])\le \frac{n}{r}+f(r).
\]
\end{theorem}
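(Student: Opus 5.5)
We follow Dvořák's argument \cite[Lemma~14]{Dvo16}, replacing sizes by independence numbers. Fix $r\ge2$ and $G\in\mathcal C$ on $n$ vertices. By fractional $\tin$-fragility, $G$ has a $(1-1/r)$-general cover $\mathcal A$ with $\tin(G[A])\le f(r)$ for every $A\in\mathcal A$. Since each vertex lies in at least $(1-1/r)|\mathcal A|$ members of $\mathcal A$, it lies in at most $|\mathcal A|/r$ of the complements $V(G)\setminus A$. Fix a maximum independent set $I$ of $G$; the key counting step is
\[
  \sum_{A\in\mathcal A}|I\setminus A|\le \frac{|I|\,|\mathcal A|}{r}.
\]
Hence some $A\in\mathcal A$ has $|I\setminus A|\le |I|/r\le n/r$. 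Since $I$ is maximum, the independence number of any induced subgraph on a vertex set $Y$ satisfies $\alpha(G[Y])\le\alpha(G)$, but we need the bound for $Y=V(G)\setminus A$ itself, not just for $I\setminus A$. So we instead average over every independent set simultaneously. The right quantity is $\alpha(G[V(G)\setminus A])$, which is not linear in $A$, and this is the main obstacle.

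We handle it with a weighted version of balanced separators. We work with separators balanced with respect to the uniform weight on $V(G)$ and use the following standard trick. Set $Y:=V(G)\setminus A$ and give $G[A]$ the weight of all its vertices. Take a tree decomposition of $G[A]$ with independence number at most $f(r)$. By \Cref{balanced-bag}, applied with vertex weights (its proof works verbatim for weights), there is a bag $X$ such that every component of $G[A]-X$ has at most $n/2$ vertices. Then $S:=X\cup Y$ is a balanced separator of $G$, since $G-S=G[A]-X$, and
\[
  \alpha(G[S])\le \alpha(G[X])+\alpha(G[Y])\le f(r)+|Y|.
\]
So it suffices to find $A$ with $|V(G)\setminus A|\le n/r$. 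This follows by averaging the sizes, because $\sum_{A}|V(G)\setminus A|\le n|\mathcal A|/r$, so some $A$ satisfies $|V(G)\setminus A|\le n/r$. With this choice the nonlinearity problem disappears: we bound $\alpha(G[Y])$ trivially by $|Y|$, which suffices for the stated bound $n/r+f(r)$. The weighted form of \Cref{balanced-bag} is not even needed, since components of $G[A]-X$ have at most $|A|/2\le n/2\le 2n/3$ vertices.

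The steps are therefore: take the cover; pick $A$ with $|V(G)\setminus A|\le n/r$ by averaging; take a tree decomposition of $G[A]$ with independence number at most $f(r)$; choose the balanced bag $X$ via \Cref{balanced-bag}; set $S=X\cup(V(G)\setminus A)$. The only points to check are that $S$ is balanced in $G$, which holds because $G-S=G[A]-X$, and the subadditivity $\alpha(G[X\cup Y])\le\alpha(G[X])+|Y|$. The main care needed is the degenerate case $A=\varnothing$, where $S=V(G)$ and the bound holds because then $n\le n/r$ forces $n=0$.
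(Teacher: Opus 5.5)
Your final argument is correct and is essentially the paper's proof: average $|V(G)\setminus A|$ over the cover to obtain some $A$ with $|V(G)\setminus A|\le n/r$, take a tree decomposition of $G[A]$ with independence number at most $f(r)$, choose a balanced bag $X$ via \Cref{balanced-bag}, and set $S:=X\cup(V(G)\setminus A)$, so that $\alpha(G[S])\le |V(G)\setminus A|+\alpha(G[X])$. The early detour through a maximum independent set and a weighted version of \Cref{balanced-bag} is unnecessary and can simply be deleted.
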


\begin{proof}
Fix an integer $r\ge2$ and let $\mathcal A$ be a $(1-1/r)$-general cover of an $n$-vertex graph $G\in\mathcal{C}$ of $\tin$-width at most $f(r)$.
Thus, for every $v\in V(G)$, the vertex $v$ belongs to at least
$(1-1/r)|\mathcal A|$ members of $\mathcal A$. Equivalently, $v$ is
omitted from at most $|\mathcal A|/r$ members of $\mathcal A$.
Consequently,
\[
  \sum_{A\in\mathcal A}|V(G)\setminus A|
  = \sum_{v\in V(G)} |\{A\in\mathcal A:v\notin A\}|
  \le \frac{n}{r}|\mathcal A|.
\]
Hence, there exists $A\in\mathcal A$ such that $|V(G)\setminus A|\le n/r$. Moreover, by assumption, $\tin(G[A])\le f(r)$. Choose a tree decomposition of $G[A]$ whose independence number is at most $f(r)$. By \Cref{balanced-bag}, this tree decomposition has a bag $B$ such that every connected component of $G[A]-B$ has at most $|A|/2$ vertices. Set $S:=(V(G)\setminus A)\cup B$. Since $G-S=G[A]-B$, every connected component of $G-S$ has at most $|A|/2\le n/2$ vertices. Thus, $S$ is a balanced separator of $G$. Moreover,
\[
  \alpha(G[S])
  \le |V(G)\setminus A|+\alpha(G[B])
  \le \frac nr+f(r).
\]
This proves the result.
\end{proof}

As an immediate consequence, a polynomial fragility rate yields a strongly sublinear bound.

\begin{corollary}
\label{poly-fragility-alpha-separators}
Let $\mathcal C$ be a fractionally tree-$\alpha$-fragile class. Suppose
that its fragility function $f$ satisfies $f(r)\le c r^a$ for some constants $c\ge1$ and $a\ge0$ and every $r\in\mathbb N$. Then there exists a constant $C = C_{c,a}$ such that every $n$-vertex graph $G\in\mathcal C$ satisfies
\[
  \sepalpha(G)\le C n^{a/(a+1)}.
\]
\end{corollary}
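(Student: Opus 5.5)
The plan is to apply \Cref{fractional-treealpha-alpha-separator} with the integer $r$ chosen to balance the two terms $n/r$ and $f(r)\le cr^a$. Since the theorem requires $r\ge2$, I will set $r:=\max\{2,\lceil n^{1/(a+1)}\rceil\}$ for an $n$-vertex graph $G\in\mathcal C$. The theorem then gives a balanced separator $S$ of $G$ with
\[
\alpha(G[S])\le \frac nr + f(r)\le \frac nr + c r^a .
\]

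First I bound the term $n/r$. Because $r\ge n^{1/(a+1)}$, we get $n/r\le n^{1-1/(a+1)}=n^{a/(a+1)}$.

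Next I bound the term $f(r)$. For $n\ge 2^{a+1}$ we have $n^{1/(a+1)}\ge 2$, so $r=\lceil n^{1/(a+1)}\rceil\le n^{1/(a+1)}+1\le 2n^{1/(a+1)}$. Hence $cr^a\le c2^a n^{a/(a+1)}$, and so
\[
\sepalpha(G)\le (1+c2^a)\,n^{a/(a+1)}.
\]

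It remains to handle the small cases $n<2^{a+1}$. Here the trivial separator $V(G)$ gives $\sepalpha(G)\le n< 2^{a+1}$, while $n^{a/(a+1)}\ge1$ for $n\ge1$, so this case is absorbed by taking $C:=\max\{1+c2^a,\,2^{a+1}\}$. The empty graph is vacuous. If $a=0$, the bound says $\sepalpha$ is bounded by a constant, and the same computation covers it.

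There is no real obstacle here: the argument is a direct optimization of the bound in \Cref{fractional-treealpha-alpha-separator}. The only points needing care are the constraint $r\ge2$ and the rounding of $r$ to an integer, and both are absorbed into the constant $C$.
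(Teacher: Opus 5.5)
Your proposal is correct and follows the paper's proof: take $r=\lceil n^{1/(a+1)}\rceil$, apply \Cref{fractional-treealpha-alpha-separator}, and bound $n/r\le n^{a/(a+1)}$ and $cr^a\le c2^a n^{a/(a+1)}$. The only difference is that your case split at $n\ge 2^{a+1}$ and the $\max$ with $2$ are unnecessary, though harmless. For every $n\ge 2$ one already has $n^{1/(a+1)}>1$, hence $r\ge 2$, and $n^{1/(a+1)}\ge1$ gives $r\le 2n^{1/(a+1)}$, which is exactly how the paper argues.
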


\begin{proof}
Let $G\in\mathcal C$ be a graph on $n\ge2$ vertices and set $r:=\left\lceil n^{1/(a+1)}\right\rceil$. Since $n\ge2$, we have $r\ge2$, and hence
\Cref{fractional-treealpha-alpha-separator} gives
\[
  \sepalpha(G)
  \le \frac nr+f(r)
  \le \frac nr+c r^a.
\]
Since $r\ge n^{1/(a+1)}$, we have $n/r\le n^{a/(a+1)}$, whereas $r
  \le n^{1/(a+1)}+1
  \le 2n^{1/(a+1)}$,
and so
\[
  \sepalpha(G)
  \le (1+c2^a)n^{a/(a+1)}.
\]
This completes the proof.
\end{proof}

We can finally prove the desired statement. For positive integers $p$ and $q$, the \textit{Ramsey number} $R(p,q)$ is the smallest integer $n_0$ such that every graph with at least $n_0$ vertices contains either a clique of size $p$ or an independent set of size $q$.

\begin{corollary}\label{fractional-treealpha-sublinear-clique-weight}
Every fractionally $\tin$-fragile class has sublinear-weight clique-based separators. More precisely, if $\mathcal C$ is fractionally $\tin$-fragile then, for every $c>0$, there exists $n_0$ such that every $n$-vertex graph $G\in\mathcal C$ with $n\ge n_0$ has a clique-based separator of weight at most $cn$.
\end{corollary}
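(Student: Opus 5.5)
The plan is to convert the $\alpha$-separator of \Cref{fractional-treealpha-alpha-separator} into a clique-based separator using Ramsey numbers, with a budget split between many small cliques and few large ones.

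Fix $c>0$ and an $n$-vertex $G\in\mathcal C$. Choose an integer $r\ge2$ with $1/r\le c/8$. \Cref{fractional-treealpha-alpha-separator} then gives a balanced separator $S$ with $\alpha(G[S])\le n/r+f(r)$, and for $n$ large enough this is at most $2n/r$. It remains to cover $S$ by vertex-disjoint cliques of small total weight.

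Fix a threshold $p\ge2$, to be chosen depending only on $c$. Apply greedy peeling inside $G[S]$ as in \Cref{peeling}: repeatedly remove a clique of size at least $p$ from the current induced subgraph of $G[S]$, until the remainder $R$ has $\omega(R)<p$.

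\textbf{Large cliques.} The removed cliques $C_i$ are disjoint, and since $x\mapsto\log(x+1)/x$ is decreasing, each satisfies $\log(|C_i|+1)\le |C_i|\log(p+1)/p$. So their total weight is at most $n\log(p+1)/p$. Choose $p$ so that this is at most $cn/2$.

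\textbf{Remainder.} Since $\omega(R)\le p-1$ and $\alpha(R)\le\alpha(G[S])\le 2n/r$, Ramsey's theorem gives $|R|<R(p,\alpha(R)+1)$. This is the delicate step, because it is polynomial in $\alpha(R)$ of degree about $p-1$ and so is not linear. Rather than bounding $|R|$, I would instead cover $R$ by singletons and small cliques. By induction on Ramsey numbers, $|R|\le \binom{\alpha+p-2}{p-1}$, which does not help directly, so I would reorder the choices instead.

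First fix $p$ from the large-clique bound, which depends only on $c$. Then note that $R$ has $\omega(R)<p$, and by the classical greedy bound every $K_p$-free graph on $m$ vertices has $\alpha\ge m^{1/(p-1)}/C_p$. That goes the wrong direction for linearity, so this route must be replaced: use fragility directly on $R$. Since $R$ is an induced subgraph of $G$ and, I assume, the class may be taken hereditary, $\mathcal C$ restricted to clique number less than $p$ has tree-$\alpha$ fragility. It is then $(\tw)$-fragile with bound $R(p,f(r'))\cdot$, because bags of independence at most $f(r')$ and clique number below $p$ have size below $R(p,f(r')+1)$.

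So the cleaner plan is the following. Take a $(1-1/r)$-cover $A$ with $|V\setminus A|\le n/r$ and a tree decomposition of $G[A]$ with bag independence at most $f(r)$. Pick the balanced bag $B$. Peel cliques of size at least $p$ from $G[B]$ and from $G[V\setminus A]$. The peeled weight is at most $n\log(p+1)/p\le cn/2$. The leftover of $B$ has size below $R(p,f(r)+1)$, a constant, so it is at most $cn/4$ for large $n$. The leftover of $V\setminus A$ is at most $n/r\le cn/8$ vertices, each taken as a singleton of weight $1$. Summing gives total weight at most $cn$, and the union is $S$, which is balanced.

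The main obstacle was the Ramsey step. It is resolved by applying the Ramsey bound only to the bounded-independence bag, and charging the at most $n/r$ uncovered vertices as singletons.
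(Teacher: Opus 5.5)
Your final ``cleaner plan'' is correct and is essentially the paper's proof: take a member $A$ of the cover with $|V(G)\setminus A|\le n/r$ and a balanced bag $B$ of $G[A]$ with $\alpha(G[B])\le f(r)$. Then greedily peel cliques of size at least $p$ from $G[B]$, whose total weight is at most $n\log(p+1)/p$, and use singletons for the fewer than $R(p,f(r)+1)$ leftover vertices of $B$ and for $V(G)\setminus A$; the paper peels cliques of size exactly $\ell$ instead, which is immaterial. The earlier detours---applying Ramsey to all of $S$, and the aside that assumes heredity and passes to $\tw$-fragility---are not needed and should be dropped from the write-up.
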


\begin{proof}
Let $f$ be a fragility function for $\mathcal C$, and fix $c>0$. Choose an integer $r\ge2$ such that $1/r\le c/3$. Following the proof of \Cref{fractional-treealpha-alpha-separator}, for every $n$-vertex graph $G\in\mathcal C$ there exist a set $A\subseteq V(G)$ and a set $B\subseteq A$
such that $|V(G)\setminus A|\le n/r$, $\alpha(G[B])\le f(r)$, and $(V(G)\setminus A)\cup B$ is a balanced separator of $G$.

Choose now an integer $\ell\ge2$ such that $\log(\ell+1)/\ell\le c/3$.
We partition $B$ into cliques as follows. By Ramsey's theorem, as long as the current induced subgraph has at least $R(\ell,f(r)+1)$ vertices, it contains a clique of size $\ell$. Remove such an $\ell$-vertex clique and continue. When the process stops, the set $B_0$ of remaining vertices satisfies $|B_0|<R(\ell,f(r)+1)$. Let $C_1,\ldots,C_m$ be the removed $\ell$-vertex cliques, and let $\mathcal S :=\{C_1,\ldots,C_m\} \cup\{\{v\}:v\in (V(G)\setminus A)\cup B_0\}$. The members of $\mathcal S$ are pairwise vertex-disjoint cliques and their union is $(V(G)\setminus A)\cup B$, which is a balanced separator of $G$. Therefore, $\mathcal S$ is a clique-based separator. Its weight satisfies
\begin{equation*}
  w(\mathcal S)
  \le m\log(\ell+1)+|V(G)\setminus A|+|B_0|
  \le \frac{|B|}{\ell}\log(\ell+1) +\frac nr+R(\ell,f(r)+1)
  \le \frac{2c}{3}n+R(\ell,f(r)+1).
\end{equation*}
Since for all sufficiently large $n$ it holds $R(\ell,f(r)+1)\le cn/3$, we immediately obtain that $w(\mathcal S)\le cn$.
\end{proof}

\section{Bounded-clique layered treewidth and layered tree-independence number}\label{sec:layered-tree-alpha} 

In this section we show that there exists a hereditary class whose every bounded-clique slice has bounded layered treewidth and yet the whole class has unbounded layered tree-independence number. The construction is based on the following result of \citet{CT25} and the operation of coning. For a graph $G$, the \emph{cone over $G$} is the graph $G^+$ obtained from $G$ by adding a universal vertex adjacent to all vertices of $G$.  

\begin{theorem}[\citet{CT25}]\label{CTclass} There exists a hereditary $(\tw,\omega)$-bounded class with unbounded tree-independence number.  
\end{theorem}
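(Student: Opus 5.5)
This statement is cited from \citet{CT25} rather than proved here, so the plan is to reconstruct the shape of their argument rather than invent a new one. I would produce, for each $k$, graphs $G_k$ and take $\mathcal C$ to be the hereditary closure of $\{G_k\}_{k\in\mathbb N}$. Two things must then be shown:
\begin{itemize}
  \item $\tin(G_k)\ge k$;
  \item every induced subgraph $H$ of any $G_k$ satisfies $\tw(H)\le f(\omega(H))$ for a single function $f$.
\end{itemize}
Taking the hereditary closure is harmless for the second point, which is stated for all induced subgraphs, and unbounded tree-independence number is witnessed by the $G_k$ themselves.

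The construction I would try is a layered one. Start from a graph of large treewidth, for example a large wall $W$, whose edges and vertices are \emph{blown up} by cliques or complete bipartite pieces. The aim is that an independent set in a bag of any tree decomposition must be large, while every $K_{q+1}$-free induced subgraph is forced to be sparse.

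For the lower bound $\tin(G_k)\ge k$, I would use the known obstruction for tree-independence number: a large collection of connected sets that pairwise touch and are hard to cover by few vertices, that is, a bramble-type argument with respect to $\alpha$. Concretely, I would show that every balanced separator $S$ of $G_k$ has $\alpha(G_k[S])\ge k$. By \Cref{balanced-bag}, some bag of any tree decomposition is such a separator, and the bound follows. To exhibit the large independent set inside any separator, I would use the grid structure: any balanced separator must meet many disjoint rows or columns of the blown-up wall, and vertices taken from distinct rows far apart are nonadjacent.

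For the $(\tw,\omega)$-bound, I would fix an induced subgraph $H$ with $\omega(H)\le q$. The intent is that the blow-up cliques inside $H$ then have size at most $q$, so $H$ is essentially a bounded-size substitution into an induced subgraph of a graph of controlled treewidth. Its treewidth is then at most $q$ times the treewidth of the underlying structure.

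The main obstacle is making these two requirements compatible. Large cliques must be essential for large $\tin$, yet the underlying structure must have bounded treewidth once cliques are small. A naive blow-up of a wall keeps treewidth unbounded even at $\omega=2$, since the wall itself remains as an induced subgraph. So the underlying structure cannot literally be a wall. It must be something whose large-treewidth obstructions all pass through large cliques, for instance line-graph-like or clique-substitution gadgets with careful adjacency. Designing that gadget, and verifying the bounded-clique treewidth bound for every induced subgraph, is where the real work of \citet{CT25} lies. I would try to follow their construction directly rather than reinvent it.
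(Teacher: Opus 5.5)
The paper gives no proof of this theorem: it is quoted from \citet{CT25} and used as a black box in \Cref{cone-layered-counterexample}, so the citation is the paper's entire ``proof''. Read as a proof, however, your proposal has a genuine gap, because it contains no construction. The parts you do carry out are correct but only restate what has to be shown: passing to the hereditary closure, and observing via \Cref{balanced-bag} that if every balanced separator $S$ of $G_k$ has $\alpha(G_k[S])\ge k$, then $\tin(G_k)\ge k$. The one concrete lower-bound argument you sketch, that a separator meets many far-apart rows, depends on the wall geometry you then discard. So no lower-bound argument remains for whatever structure would replace the wall.

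Moreover, the blow-up framework cannot be rescued by choosing a cleverer base graph. Suppose $G'$ arises from $U$ by substituting a clique $Q_v$ for each vertex $v$. Replacing $v$ by $Q_v$ in every bag of a tree decomposition of $U$ gives a tree decomposition of $G'$ with the same bag independence numbers. Indeed, an independent set uses at most one vertex of each $Q_v$, and vertices of distinct $Q_u$ and $Q_v$ are nonadjacent only if $uv\notin E(U)$. Since $U$ is also an induced subgraph of $G'$, we get $\tin(G')=\tin(U)$. Thus the base graph $U_k$, which lies in the hereditary class, would already have to be the required example, and the blow-up contributes nothing. Substituting stable sets or complete bipartite pieces fails the other way: it creates induced copies of $K_{m,m}$, which has clique number $2$ and treewidth $m$. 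More generally, $\tin\le\tw+1$ forces any witnesses to satisfy $f(\omega(G_k))\ge k-1$. So the large cliques must be interwoven with the high-treewidth structure in a way that no substitution captures. Constructing such graphs and checking $\tw(H)\le f(\omega(H))$ for all their induced subgraphs $H$ is exactly the content of \citet{CT25}, and your proposal offers no route to it.
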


\begin{theorem}\label{cone-layered-counterexample} There exists a hereditary class $\mathcal C$ with the following properties. \begin{enumerate} \item For every integer $r\geq 1$, the class $\mathcal C_r=\{H\in\mathcal C:\omega(H)\le r\}$ has bounded layered treewidth. 
\item The class $\mathcal C$ has unbounded layered tree-independence number. \end{enumerate} \end{theorem}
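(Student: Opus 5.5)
Let $\mathcal D$ be the hereditary $(\tw,\omega)$-bounded class of \Cref{CTclass}, with $\tw(G)\le h(\omega(G))$ for $G\in\mathcal D$ and $\tin$ unbounded on $\mathcal D$. We take $\mathcal C$ to be the hereditary closure of $\{G^+ : G\in\mathcal D\}$, i.e.\ all induced subgraphs of cones over graphs in $\mathcal D$. Every graph in $\mathcal C$ is either in $\mathcal D$ (an induced subgraph avoiding the apex) or a cone $G'^+$ with $G'\in\mathcal D$, since $\mathcal D$ is hereditary. Coning is the natural tool: it makes every graph have radius at most one, which collapses the layered parameters to the plain ones.

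\textbf{Item 1.} Let $H\in\mathcal C$ with $\omega(H)\le r$. If $H\in\mathcal D$, then $\tw(H)\le h(r)$. Otherwise $H=G'^+$ with $\omega(G')\le r-1$, so $\tw(H)\le \tw(G')+1\le h(r-1)+1$, since we can add the apex to every bag. Using the single-layer layering $V_0=V(H)$, layered treewidth is at most treewidth plus one. Hence $\ltw$ is bounded on $\mathcal C_r$ by $\max\{h(r),h(r-1)+1\}+1$, assuming without loss of generality that $h$ is nondecreasing.

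\textbf{Item 2.} We show $\ltreealpha(G^+)\ge$ roughly $\tin(G)/3$, which is unbounded. Fix a tree decomposition of $G^+$ and a layering $(V_i)$ witnessing layered independence number $\ell$. Let $u$ be the apex, lying in some layer $V_j$. Since $u$ is adjacent to every vertex, every vertex lies in $V_{j-1}\cup V_j\cup V_{j+1}$. Restrict the decomposition to $V(G)$ by deleting $u$ from every bag; this is a tree decomposition of $G$. Each bag $X$ then meets at most three layers, and an independent set in $G[X]$ splits among these layers into independent sets of the layered parts. So $\alpha(G[X])\le 3\ell$, giving $\tin(G)\le 3\ltreealpha(G^+)$. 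Since $\tin$ is unbounded on $\mathcal D$, $\ltreealpha$ is unbounded on $\mathcal C$.

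The only real subtlety is the description of $\mathcal C$ in Item 1: we must ensure that the bounded-clique slices contain only graphs of $\mathcal D$ and cones over graphs of $\mathcal D$ of smaller clique number. This holds because $\mathcal D$ is hereditary. Both inequalities used above are routine.
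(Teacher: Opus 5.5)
Your proof is correct and follows the paper's argument. You use the same hereditary closure of cones over the Chudnovsky--Trotignon class, the same $\tw$-bound obtained by adding the apex to every bag in item~1, and the same observation in item~2 that the apex confines all vertices to three consecutive layers. The one difference is that you restrict the decomposition of $G^+$ directly to $G$ rather than first proving $\tin(G^+)=\tin(G)$; this shows that only the easy inequality $\tin(G)\le\tin(G^+)$ is needed. (Also, $h(r-1)$ at $r=1$ refers to the cone over the null graph, i.e.\ $K_1$, which is trivial.)
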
 

\begin{proof} Let $\mathcal D$ be a hereditary $(\tw,\omega)$-bounded class with unbounded tree-independence number from \Cref{CTclass}. Choose a nondecreasing function $f$ such that $\tw(G)\le f(\omega(G))$ for every $G\in\mathcal D$. Let $\mathcal C$ be the hereditary closure of the class $\{G^+:G\in\mathcal D\}$. Observe that, if $H$ is an induced subgraph of $G^+$, with $G\in\mathcal D$, then either $H$ omits the universal vertex, in which case $H$ is an induced subgraph of $G$ and hence belongs to $\mathcal D$, or $H$ contains the universal vertex, in which case $H=J^+$ for an induced subgraph $J$ of $G$, and hence for some $J\in\mathcal D$. Thus every graph in $\mathcal C$ is either a graph in $\mathcal D$ or the cone over a graph in $\mathcal D$. Moreover, $\omega(G^+)=\omega(G)+1$ and $\tw(G^+)\le \tw(G)+1$, for every graph $G$. Consequently, if $H\in\mathcal C$ and $\omega(H)\le r$, then $\tw(H)\le f(r)+1$. Hence $\mathcal C$ is $(\tw,\omega)$-bounded. Since $\ltw(H)\le \tw(H)+1$, it follows that $\ltw(H)\le f(r)+2$ whenever $H\in\mathcal C$ and $\omega(H)\le r$. This proves the first assertion. 

It remains to show that $\mathcal C$ has unbounded layered tree-independence number. We first claim that, for every nonempty graph $G$, we have $\tin(G^+)=\tin(G)$. Since $G$ is an induced subgraph of $G^+$, we immediately obtain $\tin(G)\le \tin(G^+)$. Conversely, take a tree decomposition of $G$ with independence number $\tin(G)$ and add the universal vertex to every bag. This gives a tree decomposition of $G^+$. Adding a vertex that is adjacent to every old vertex does not increase the independence number of any nonempty bag; empty bags, if present, contribute independence number at most $1\le\tin(G)$. Thus, $\tin(G^+)\le\tin(G)$. 

We next claim that $\tin(G^+)\le 3\,\ltreealpha(G^+)$, for every graph $G$. Let $u$ be the universal vertex of $G^+$, and take a tree decomposition $(T,\{B_t\}_{t\in V(T)})$ and a layering  $(V_i)_{i\in\mathbb Z}$ witnessing $\ltreealpha(G^+)=k$. Thus, $\alpha(G^+[B_t\cap V_i])\le k$ for every $t\in V(T)$ and $i\in\mathbb Z$. Suppose that $u\in V_j$. Since $u$ is adjacent to every vertex of $G^+$, we obtain $V(G^+)\subseteq V_{j-1}\cup V_j\cup V_{j+1}$. Now fix an arbitrary bag $B_t$ and let $S$ be an independent set in $G^+[B_t]$. We have \[ |S| =\sum_{i=j-1}^{j+1}|S\cap V_i| \le \sum_{i=j-1}^{j+1} \alpha\bigl(G^+[B_t\cap V_i]\bigr) \le 3k. \] Therefore, every bag of this tree decomposition has independence number at most $3k$, and hence $\tin(G^+)\le 3k$.  

Combining the previous two claims, we conclude that $\ltreealpha(G^+) \ge \tin(G^+)/3 = \tin(G)/3$. Since $\mathcal D$ has unbounded tree-independence number, the class of cones $G^+$ with $G\in\mathcal D$ has unbounded layered tree-independence number, and hence $\mathcal C$ has unbounded layered tree-independence number. 
\end{proof} 

\begin{remark} The preceding construction turns an obstruction to bounded tree-independence number into an obstruction to bounded layered tree-independence number, but it does not by itself yield an obstruction to fractional $\tin$-fragility. Indeed, let $\mathcal D$ be a hereditary class and let $\mathcal C$ be the hereditary closure of $\{G^+:G\in\mathcal D\}$ as above. We claim that $\mathcal C$ is fractionally $\tin$-fragile if and only if $\mathcal D$ is.

One direction follows immediately from $\mathcal D\subseteq\mathcal C$. Conversely, suppose that $\mathcal D$ is fractionally $\tin$-fragile, with fragility function $f$, and let $G\in\mathcal D$. For $r\in\mathbb N$, let $\mathcal A$ be a $(1-1/r)$-general cover of $G$ of $\tin$-width at most $f(r)$. If $u$ is the universal vertex of $G^+$, define $\mathcal A^+:=\{A\cup\{u\}:A\in\mathcal A\}$, with multiplicities inherited from $\mathcal A$. Every vertex of $G$ belongs to the same proportion of members of $\mathcal A^+$ as it does of $\mathcal A$, while $u$ belongs to every member. Hence, $\mathcal A^+$ is a $(1-1/r)$-general cover of $G^+$. Moreover, for every $A\in\mathcal A$,
\[
\tin(G^+[A\cup\{u\}])
 =\tin((G[A])^+)
 \le \max\{1,\tin(G[A])\}
 \le \max\{1,f(r)\},
\]
and so $\mathcal C$ is fractionally $\tin$-fragile.

Consequently, coning separates bounded layered tree-independence number from bounded layered treewidth on the bounded-clique slices, but it does not turn the Chudnovsky--Trotignon construction into a counterexample to the corresponding implication for fractional $\tin$-fragility.
\end{remark}

\section{Sublinear separators and $\chi$-boundedness}\label{sec:counter}

\citet[Question~7.2]{DMMY25} asked whether every graph class admitting sublinear-weight clique-based separators has bounded $\alpha$-degeneracy. A negative answer to this question can be obtained from a construction of \citet[Theorem~1]{MS18}. Indeed, they construct subgraphs of the hypercube with arbitrarily large average degree such that every $t$-vertex
subgraph has a balanced separator of size $O(t(\log\log t)^2/\log t)$. Taking the monotone closure of these graphs gives a monotone bipartite class with sublinear separators and unbounded degeneracy. Since the neighborhood of every vertex in a bipartite graph is independent, $\alpha$-degeneracy and degeneracy coincide on bipartite graphs. Thus, this construction gives a negative answer to the question above.

We show that a considerably stronger phenomenon occurs: Sublinear separators
do not imply $\chi$-boundedness even for monotone triangle-free classes. In particular, the following result also gives a negative answer to \cite[Question~7.2]{DMMY25}.

\begin{theorem}\label{not-chi-bound}
There exists a monotone class $\mathcal{C}$ consisting of
triangle-free graphs with unbounded chromatic number and sublinear separators.
\end{theorem}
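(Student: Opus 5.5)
We will modify the Moshkovitz--Shapira construction. The plan has three steps: build graphs of large girth that are sparse in every subgraph but need many colors, take their monotone closure, and check the two required properties on that closure.

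\textbf{Building the graphs.} The approach combines a standard high-chromatic, high-girth construction with the separator-friendly structure of \cite[Theorem~1]{MS18}. \citet{MS18} produce graphs of large girth with the following property: every $t$-vertex subgraph has a balanced separator of size $o(t)$, uniformly via a single function $f(t)=o(t)$. Their degree growth comes from an iterated product or blow-up structure. Our plan is to take, for each $k$, a graph $G_k$ of girth at least $4$ and chromatic number at least $k$ with the same property. We take $\mathcal C$ to be the monotone closure of $\{G_k\}$, so the property passes automatically to every graph in $\mathcal C$, since each is a subgraph of some $G_k$.

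\textbf{Getting large chromatic number.} Any triangle-free graph of chromatic number $k$ contains a $k$-critical subgraph of minimum degree at least $k-1$. So unbounded chromatic number forces unbounded degeneracy, which is compatible with sublinear separators (the hypercube examples already show this). The natural candidate is therefore a high-chromatic analogue of the Moshkovitz--Shapira graphs. I would try a Tutte/Zykov- or Mycielski-type iteration: each step multiplies the number of vertices enormously while adding only one to the chromatic number.

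\textbf{Sublinear separators.} The key point is to separate a $t$-vertex subgraph recursively along the "levels" of the construction. In a Zykov-style tree of copies, a subgraph splits into copies of the previous graph glued through a small set of "apex" vertices. I would carry this out in four steps, in order:

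1. Define $G_{k+1}$ from many disjoint copies of $G_k$ together with independent "connector" vertices, each adjacent to one vertex in each of a set of copies. This keeps the graph triangle-free and gives $\chi(G_{k+1})=\chi(G_k)+1$.
2. For a subgraph $H$ on $t$ vertices, handle two cases. If the connector vertices of $H$ make up an $o(t)$ fraction, delete them and recurse into the disjoint copies.
3. Otherwise, bound the structure using the large sparsity of the connector hypergraph. The goal is to give this hypergraph large girth and polynomial size, in the spirit of the Moshkovitz--Shapira analysis.
4. Conclude a bound $f(t)=o(t)$ that is independent of $k$.

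Here the depth of the recursion is $k$, but a subgraph of size $t$ can only meet $O(\log t)$ effective levels. This is what should make the bound uniform in $k$.

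\textbf{Expected obstacle.} The hardest step is the uniformity: obtaining a single $o(t)$ separator function for all $k$ simultaneously while $\chi\to\infty$. I would first try to import the separator lemma of \cite{MS18} as a black box for each level. Then I would control the number of levels meeting a $t$-vertex subgraph by requiring the sizes to grow tower-fast, so that only $O(\log^* t)$ levels contribute nontrivially.
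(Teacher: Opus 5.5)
There is a genuine gap: your proposal gives no mechanism that actually produces sublinear separators. Steps 2--4 restate the goal, and the iterated constructions you name tend to work against it. In Zykov's graph $Z_{k+1}$ (disjoint copies of $Z_1,\dots,Z_k$ plus a connector for every tuple in $V(Z_1)\times\cdots\times V(Z_k)$), any $a\in V(Z_{k-1})$ and $b\in V(Z_k)$ have a private common neighbour. So $Z_{k+1}$ contains the $1$-subdivision of $K_{n_{k-1},n_k}$, and hence the $1$-subdivision of a cubic bipartite expander on $m=\Theta(n_{k-1})$ vertices. That subgraph has $O(m)$ vertices and treewidth $\Omega(m)$. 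By \Cref{DNsep}, the monotone closure therefore contains arbitrarily large graphs all of whose balanced separators have linear size. The same happens in your Step~1 whenever connectors are attached to all tuples of vertices from $k$ copies of $G_k$, and that is exactly what makes $\chi(G_{k+1})=\chi(G_k)+1$ automatic. If instead you sparsify the connector system as in Step~3, you must re-prove the chromatic jump (e.g.\ via a hypergraph of chromatic number $>k$). You must also, separately, show that no subgraph expands, and large girth does not give this: there are bounded-degree expanders of girth $\Omega(\log n)$ (e.g.\ Ramanujan graphs). Note also that \cite{MS18} do not use a product or blow-up. They randomly sparsify the hypercube, and their separator bound comes from the hypercube's metric, so there is no level-by-level separator lemma to import as a black box.

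The missing idea is a host geometry that turns large girth into non-expansion. The paper works inside the Hamming graph $H(q,d)$, where every bounded-degree $\varepsilon$-edge-expander containing a cycle has girth $O(\log d)$ (\Cref{girth-bound}). The reason is that such an expander $J$ has diameter $O(\log|V(J)|)$, while expansion across the coordinate cuts forces half its vertices to lie within bounded Hamming distance of a coordinatewise-majority point, so $|V(J)|=d^{O(1)}$. The paper keeps each edge with probability $q/d$, then deletes high-degree vertices and one vertex from each cycle of length at most $d^{1/4}$. The result has maximum degree $O(q^2)$ and girth $>d^{1/4}$. It still has independence number below $2q^{d-1}$, because the $q$-cliques along each coordinate force many edges inside any large set, while at least $q^d/2$ vertices survive; hence $\chi>q/4$. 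Repeatedly peeling a set $A$ with $|\partial A|<|A|/k$ then gives every $n$-vertex subgraph of $G_k$ a separator of size at most $n/k+1$. This also shows that the uniformity you single out as the main obstacle is an artefact of nesting. The paper's $G_k$ are not nested, and $G_1,\dots,G_{K-1}$ are finite, so large graphs in the closure come from $G_k$ with $k\ge K$; the per-$k$ bound $n/k+1$ then already gives $o(n)$. In a nested family such decay is impossible. Nothing in your plan supports the claim that a $t$-vertex subgraph meets only $O(\log t)$ or $O(\log^* t)$ ``effective'' levels, and a short path can already pass through connectors of every level.
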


The proof of \Cref{not-chi-bound} is inspired by the construction of \citet{MS18}, who randomly sparsify the binary hypercube and combine large girth with its metric structure to exclude expanding subgraphs. To obtain unbounded chromatic number, we replace the hypercube by the \emph{Hamming graph} $H(q,d)$, the graph on $[q]^d$ in which two vertices are adjacent if and only if they differ in exactly one coordinate. The same non-expansion mechanism as in \cite{MS18} survives, while the cliques obtained by fixing all but one coordinate allow us to force small independence number.
We first show in \Cref{girth-bound} that every bounded-degree edge-expander in $H(q,d)$ containing a cycle has girth $O(\log d)$, and then construct in \Cref{random} bounded-degree subgraphs of $H(q,d)$ with arbitrarily large chromatic number and girth much larger than this. The resulting failure of expansion is then converted into sublinear balanced separators by an iterative peeling argument.

For $\varepsilon>0$, a graph $G$ is an \emph{$\varepsilon$-edge-expander} if every nonempty set $A\subseteq V(G)$ with $|A|\le |V(G)|/2$ satisfies $|\partial_G(A)|\ge \varepsilon |A|$, where $\partial_G(A)$ denotes the set of edges of $G$ with exactly one endpoint in $A$.

\begin{lemma}\label{girth-bound}
Fix $q\ge2$, $M\ge1$, and $\varepsilon>0$. There is a constant $C=C(q,M,\varepsilon)$ such
that every $\varepsilon$-edge-expander subgraph $J$ of $H(q,d)$ that contains a cycle and
has maximum degree at most $M$ satisfies $\girth(J)\le C\log(d+1)$.
\end{lemma}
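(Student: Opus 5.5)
The plan is to show that such a $J$ has only $\mathrm{poly}(d)$ vertices, with the polynomial depending only on $q,M,\varepsilon$, and then to use the standard logarithmic diameter bound for bounded-degree expanders together with $\girth\le 2\,\diam+1$. Write $V:=V(J)$, $N:=|V|$, and let $E_i$ be the set of edges of $J$ in coordinate direction $i\in[d]$, that is, edges joining vertices differing exactly in coordinate $i$.

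First, I would analyse each direction separately. For $a\in[q]$, let $P_{i,a}:=\{v\in V: v_i=a\}$. These $q$ classes partition $V$, and every edge of $J$ between different classes lies in $E_i$. Hence for any union $A$ of classes, $\partial_J(A)\subseteq E_i$.

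Two cases arise, according to the largest class $P$ for direction $i$.
\begin{itemize}
\item If $|P|\le N/2$, apply expansion to the smaller of $P$ and $V\setminus P$. Both have size at least $N/q$, since $|P|\ge N/q$ and $|V\setminus P|\ge N/2$. This gives $|E_i|\ge \varepsilon N/q$; call $i$ \emph{heavy}.
\item Otherwise some class $P_{i,a_i}$ has more than $N/2$ vertices. Applying expansion to $A_i:=V\setminus P_{i,a_i}$ (if nonempty) gives $|A_i|\le |E_i|/\varepsilon$; call $a_i$ the \emph{dominant value} of $i$.
\end{itemize}
Since $\sum_i|E_i|=|E(J)|\le MN/2$, there are at most $t:=Mq/(2\varepsilon)$ heavy directions. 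Moreover $\sum_{i\text{ non-heavy}}|A_i|\le MN/(2\varepsilon)$.

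Next, I would count vertices. By double counting, the average number of non-heavy directions $i$ with $v_i\neq a_i$ is at most $M/(2\varepsilon)$. By Markov's inequality, at least $N/2$ vertices deviate from the dominant values in at most $k:=\lfloor M/\varepsilon\rfloor$ non-heavy coordinates. Every such vertex lies in the set of strings that are arbitrary on the at most $t$ heavy coordinates and agree with $(a_i)$ outside at most $k$ further coordinates. This set has size at most $q^{t}\,(qd)^{k}$. Therefore $N\le 2q^{t}(qd)^{k}$, so $\log N=O_{q,M,\varepsilon}(\log(d+1))$.

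Finally, I would bound the diameter. An $\varepsilon$-edge-expander is connected. For any vertex $v$ and radius $r$, if the ball $B_r(v)$ has at most $N/2$ vertices, then it emits at least $\varepsilon|B_r(v)|$ edges. Each vertex has degree at most $M$, so $|B_{r+1}(v)|\ge(1+\varepsilon/M)|B_r(v)|$. Hence balls around any two vertices exceed $N/2$ after $O(\log_{1+\varepsilon/M}N)$ steps, so they intersect, giving $\diam(J)=O_{M,\varepsilon}(\log N)$.

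Since $J$ is connected and contains a cycle, a shortest cycle has length at most $2\,\diam(J)+1$. The usual argument is that a shortest cycle is isometric up to an additive one, so antipodal points on it are at distance at least $\lfloor g/2\rfloor$. Combining this with the bound on $\log N$ gives $\girth(J)\le C\log(d+1)$.

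The main obstacle is the second step: Hamming balls grow polynomially in $d$, so a naive diameter bound would only give $O(d)$. The point to get right is the dichotomy between heavy directions and directions with a dominant value. This dichotomy is what confines half of $J$ to a set of polynomial size. It relies on the fact that the cut between the value classes of a single coordinate consists only of edges in that direction.
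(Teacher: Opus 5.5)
Your proof is correct and follows essentially the paper's route. Ball growth gives $\diam(J)=O_{M,\varepsilon}(\log|V(J)|)$ and hence $\girth(J)\le 2\diam(J)+1$. Separately, expansion applied to the value classes of a single coordinate (whose boundary consists only of edges in that direction) bounds the average Hamming distance to a reference string $a$, which confines at least half of $J$ to a Hamming ball of radius $O(M/\varepsilon)$, of size polynomial in $d$.

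The one difference is that the paper does not need your heavy/dominant dichotomy. It takes $a_i$ to be a plurality value and applies expansion to each class $A_{i,b}$ with $b\neq a_i$ separately; each such class has at most $|V(J)|/2$ vertices, since it is no larger than the plurality class. This treats every coordinate uniformly, with each edge counted at most twice, and removes the $q^{t}$ factor.
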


\begin{proof}
Let $J$ be as in the statement and set $t:=|V(J)|$ and $H:=H(q,d)$. We first bound the girth of $J$ in terms of $t$. Since $\varepsilon>0$, the graph $J$ is connected. Let $B_s(v) := \{u \in V(J): d_{J}(u,v) \leq s\}$ be the radius-$s$ ball in $J$ centered at $v$. As long as $|B_s(v)|\le t/2$, the expansion property gives at least $\varepsilon |B_s(v)|$ edges from $B_s(v)$ to its complement in $V(J)$. Since every vertex of $J$ has degree at most $M$, these edges have at least $\varepsilon|B_s(v)|/M$ distinct endpoints outside $B_s(v)$. Hence, 
\[
  |B_{s+1}(v)|
  \ge \left(1+\frac{\varepsilon}{M}\right)|B_s(v)|,
\] 
for every $v \in V(J)$. Now set $r:=\left\lceil
\log t/\log(1+\varepsilon/M)\right\rceil$. If $|B_r(v)|\le t/2$, then the previous inequality can be iterated for $s=0,\ldots,r-1$, yielding $|B_r(v)|
\ge (1+\varepsilon/M)^r \ge t$, a contradiction. Hence, every ball of radius $r$ contains more than $t/2$ vertices. Any two such balls therefore intersect, and so $\diam(J)\le 2r=O_{M,\varepsilon}(\log t)$, from which $\girth(J) \leq 2\diam(J) +1 = O_{M,\varepsilon}(\log t)$.

We finally show that $\log t = O_{q,M,\varepsilon}(\log(d+1))$. For each coordinate $i\in[d]$, choose a value $a_i\in[q]$ which occurs most frequently in the $i$-th coordinate among the vertices of $J$, and set $a:=(a_1,\ldots,a_d)$. For $b\in[q]$, let $A_{i,b} = \{v\in V(J) : v_i = b\}$ be the set of vertices whose $i$-th coordinate is $b$. If $b\neq a_i$, then $|A_{i,b}|\le t/2$. Hence, the expansion property gives $|\partial_J(A_{i,b})|\ge \varepsilon |A_{i,b}|$. Therefore, double counting the number of pairs $(v,i)$ such that $v_i\neq a_i$, we obtain
\[
  \varepsilon\sum_{v\in V(J)} d_H(v,a)
  = \varepsilon\sum_{v\in V(J)}|\{i \in [d]: v_i \neq a_i\}|
  = \varepsilon\sum_{i=1}^d\sum_{b\neq a_i}|A_{i,b}|
  \le \sum_{i=1}^d\sum_{b\neq a_i}|\partial_J(A_{i,b})|.
\]
Observe now that every edge of $J$ is counted in the rightmost sum at most twice. Thus,
\[
  \varepsilon\sum_{v\in V(J)} d_H(v,a)
  \le 2|E(J)|
  \le Mt.
\]
It follows that the average distance from $a$ is at most $M/\varepsilon$. Consequently, at least $t/2$ vertices of $J$ have distance at most $\left\lceil 2M/\varepsilon\right\rceil$ from $a$. Set $\ell:= \left\lceil 2M/\varepsilon\right\rceil$. Since a ball of radius $s$ in $H$ has at most
$\sum_{j=0}^{s}\tbinom{d}{j}(q-1)^j$ vertices, we obtain
\[
  \frac t2
  \le \sum_{j=0}^{\ell}\binom{d}{j}(q-1)^j
  = O_{q,M,\varepsilon}(d^\ell),
\]
which gives $\log t=O_{q,M,\varepsilon}(\log(d+1))$.
\end{proof}

The second lemma is a Hamming-graph version of the classical probabilistic construction of high-girth, high-chromatic graphs of \citet{Erd59}, combined
with the sparsification idea of \citet{MS18}. We retain every edge of $H(q,d)$ independently with probability $q/d$. For each coordinate, fixing all the others partitions the vertex set into cliques of size $q$. This implies, via a union bound, that the resulting graph has small independence number. At the same time, only few vertices have large degree.
Moreover, a cycle of length $\ell$ uses at most $\ell/2$ coordinates, which yields a sufficiently small expected number of short cycles. Deleting the
high-degree vertices and one vertex from each short cycle leaves a bounded-degree graph of large girth and chromatic number larger than $q/4$.

\begin{lemma}\label{random}
For every integer $k\ge1$ and every sufficiently large $d$, there is a
subgraph $G$ of $H(4k,d)$ with
\[
 \chi(G)>k,\qquad \Delta(G)\le8(4k)^2,
 \qquad \girth(G)>\lfloor d^{1/4}\rfloor.
\]
\end{lemma}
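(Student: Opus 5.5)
The plan is an Erd\H{o}s-type alteration argument on a random subgraph of $H(q,d)$ with $q:=4k$. Let $n:=q^d$ and $g:=\lfloor d^{1/4}\rfloor$. Keep every edge of $H(q,d)$ independently with probability $p:=q/d$, and call the result $G_0$. I will show that, for large $d$, with positive probability the following three events hold simultaneously:
\begin{enumerate}[label=(\alph*)]
  \item $\alpha(G_0)<2n/q$;
  \item at most $n/4$ vertices have degree greater than $8q^2$;
  \item $G_0$ has at most $n/4$ cycles of length at most $g$.
\end{enumerate}
Then $G$ is obtained by deleting the high-degree vertices and one vertex from each short cycle. Deleting vertices creates no new cycles and does not increase degrees, so $\Delta(G)\le 8q^2$ and $\girth(G)>g$. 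Moreover $|V(G)|\ge n/2$ and $\alpha(G)\le\alpha(G_0)<2n/q$, hence
\[
\chi(G)\ge \frac{|V(G)|}{\alpha(G)}>\frac{n/2}{2n/q}=\frac{q}{4}=k .
\]

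For (a), I fix a set $S$ of size $s:=\lceil 2n/q\rceil$ and a coordinate $i$. Fixing all other coordinates partitions $[q]^d$ into $n/q$ lines, each a $q$-clique of $H(q,d)$. If $x_j:=|S\cap \text{line}_j|$, then the number of $H$-edges of $S$ inside lines of direction $i$ is
\[
\sum_j\binom{x_j}{2}\ \ge\ \sum_j (x_j-1)\ \ge\ s-\frac nq\ \ge\ \frac nq .
\]
Summing over the $d$ coordinates, $S$ spans at least $dn/q$ edges of $H(q,d)$. Hence $S$ is independent in $G_0$ with probability at most $(1-q/d)^{dn/q}\le e^{-n}$. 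A union bound over at most $2^n$ sets gives failure probability at most $(2/e)^n=o(1)$.

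For (b), every vertex has $d(q-1)$ incident edges in $H(q,d)$. Its degree in $G_0$ is therefore binomial with mean $q(q-1)<q^2$. Markov's inequality gives probability at most $1/8$ of exceeding $8q^2$, so the expected number of such vertices is at most $n/8$. By Markov's inequality again, (b) fails with probability at most $1/2$. Chernoff could be used instead to get a smaller failure probability if slack is needed.

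For (c), which I expect to be the main counting step, the key observation is the following. In a closed walk of length $\ell$ in $H(q,d)$, every coordinate that is changed must be changed at least twice, since one change to a coordinate cannot return it to its starting value. So at most $\ell/2$ distinct coordinates are used. I then bound the number of closed walks of length $\ell$ by choosing four things in turn:
\begin{enumerate}[label=(\roman*)]
  \item the start vertex, in $n$ ways;
  \item a set of at most $\ell/2$ coordinates, in at most $\ell d^{\ell/2}$ ways;
  \item the sequence of coordinates used, in at most $(\ell/2)^\ell$ ways;
  \item the new value at each step, in at most $q^\ell$ ways.
\end{enumerate}
This count also bounds the number of cycles of length $\ell$ (up to a factor). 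Multiplying by $p^\ell=(q/d)^\ell$, the expected number of cycles of length $\ell$ in $G_0$ is at most $n\,\ell\,(\ell q^2)^\ell d^{-\ell/2}$. For $3\le\ell\le g$ we have $\ell q^2 d^{-1/2}\le q^2 d^{-1/4}$. Summing this geometric-type series over $\ell$ gives $o(n)$ as $d\to\infty$ with $q$ fixed. Markov's inequality then shows that (c) fails with probability $o(1)$.

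Combining the three estimates, all three events hold with probability at least $1/2-o(1)>0$ for large $d$. This yields the required $G$.
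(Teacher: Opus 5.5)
Your proposal is correct and follows the paper's proof essentially step for step: random sparsification with $p=q/d$, line-cliques plus a union bound for $\alpha$, Markov's inequality for the degrees, the ``each used coordinate changes at least twice'' count giving $n\ell(\ell q^2/\sqrt d)^\ell$ expected short cycles, and deletion of vertices. The only differences are cosmetic: you use $\binom{x}{2}\ge x-1$ in place of Cauchy--Schwarz, and you apply Markov's inequality to the high-degree count and the short-cycle count separately with threshold $n/4$ each, rather than to their sum with threshold $n/2$.
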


\begin{proof}
Set $q:=4k$, $n:=q^d$, and $g:=\lfloor d^{1/4}\rfloor$. Let $R$ be the random spanning subgraph of $H(q,d)$ obtained by keeping each edge independently with probability $p:=q/d$. Throughout the proof, $q$ is fixed and $d\to\infty$.

We first show that, with probability tending to one, $\alpha(R)<2n/q$. Set $s:=2n/q$, and fix a set $S\subseteq V(H(q,d))$ of size $s$. For a fixed coordinate $i\in[d]$, group together two vertices whenever they agree in every coordinate other than $i$. This partitions $V(H(q,d))$ into $q^{d-1}=n/q$ sets of size $q$, each of which is a clique. Let $x_1,\ldots,x_{n/q}$ be the sizes of the intersections of these sets
with $S$. Clearly, $\sum_{j=1}^{n/q}x_j=s$. The Cauchy-Schwarz inequality then gives
\[
  \sum_{j=1}^{n/q}\binom{x_j}{2}
  = \frac12\left(\sum_{j=1}^{n/q}x_j^2-s\right)
  \ge \frac12\left(\frac{s^2}{n/q}-s\right)
  = \frac{n}{q}.
\]
Thus, for every coordinate $i$, the set $S$ contains at least $n/q$ edges whose endpoints differ in coordinate $i$. Since every edge of $H(q,d)$ has endpoints differing in exactly one coordinate, the edges obtained from distinct coordinates are disjoint. Consequently, $S$ induces at least $dn/q$ edges in $H(q,d)$. Therefore, 
\[
  \mathbb{P}(S\text{ is independent in }R)
  \le (1-p)^{dn/q}.
\] 
Taking a union bound over all $s$-vertex subsets of $V(H(q,d))$, we
obtain
\[
  \mathbb{P}\bigl(\alpha(R)\ge s\bigr)
  \le \binom{n}{s}(1-p)^{dn/q}
  \le 2^n(1-p)^{dn/q}
  \le 2^n\exp\left(-\frac{pdn}{q}\right)
  = \left(\frac{2}{e}\right)^n
  = o(1).
\]

We next control the maximum degree. Every vertex of $H(q,d)$ has degree $d(q-1)$. Hence, for every vertex $v$,
\[
  \mathbb E[d_R(v)]
  = pd(q-1)
  = q(q-1)
  < q^2.
\]
By Markov's inequality,
\[
  \mathbb{P}(d_R(v)>8q^2)
  \le \frac{\mathbb E[d_R(v)]}{8q^2}
  < \frac18.
\]
Hence, denoting by $X$ the number of vertices of $R$ whose degree is greater than $8q^2$, linearity of expectation gives $\mathbb E[X]<n/8$.

We now show that $R$ contains few short cycles. For $\ell\ge3$, let $C_\ell$ denote the number of cycles of length $\ell$ in $R$. We first bound the number of cycles of length $\ell$ in $H(q,d)$. Consider such a cycle, and let $j$ be the number of coordinates that change along its edges. Each of these $j$ coordinates must change at least twice: indeed, after traversing the cycle, every coordinate must return to its initial value. Hence, $j\le \left\lfloor\ell/2\right\rfloor$. Now, there are at most $\binom{d}{j}$ ways to choose these coordinates and $n$ ways to choose a starting vertex. Starting from this vertex, each of the $\ell$ successive edges is determined by choosing one of the $j$ coordinates and changing its value to one of the other $q-1$ values. Thus, for fixed $j$, the number of possible sequences is at most $n\binom{d}{j}\bigl(j(q-1)\bigr)^\ell$. Consequently, the number of cycles of length $\ell$ in $H(q,d)$ is at most
\[
  n\sum_{j=1}^{\lfloor\ell/2\rfloor}\binom{d}{j}(j(q-1))^\ell 
  \leq n\ell\cdot d^{\ell/2}(\ell q)^\ell.
\]
Combining the above with the fact that a fixed cycle of length $\ell$ is present in $R$ with probability $p^\ell$ and using $p = q/d$, we obtain 
\[
  \mathbb{E}[C_\ell]
  \le n\ell \cdot d^{\ell/2}(\ell q)^\ell \cdot p^\ell 
  = n\ell\left(\frac{q^2\ell}{\sqrt d}\right)^\ell.
\]
Let $Y:=\sum_{\ell=3}^{g} C_\ell$ be the total number of cycles of length at most $g$ in $R$. By linearity of expectation and recalling that $g=\lfloor d^{1/4}\rfloor$, we obtain 
\[
  \mathbb{E}[Y]
  \le n\sum_{\ell=3}^{g}\ell\left(\frac{q^2\ell}{\sqrt d}\right)^\ell
  \le n\sum_{\ell=3}^{g}\ell\left(\frac{q^2}{d^{1/4}}\right)^\ell
  \le n\sum_{\ell=3}^{\infty}\ell\left(\frac{q^2}{d^{1/4}}\right)^\ell
  = o(n),
\]
as $q$ is fixed and $q^2/d^{1/4}\to0$ as $d\to\infty$.

Recall now that $X$ denotes the number of vertices of $R$ whose degree is greater than $8q^2$, and that $\mathbb{E}[X]<n/8$. Combining this with the previous paragraph gives $\mathbb{E}[X+Y] \le n/8+o(n)$. By Markov's inequality,
\[
  \mathbb{P}(X+Y>n/2)
  \le \frac{\mathbb{E}[X+Y]}{n/2}
  \le \frac14+o(1).
\]
On the other hand, we have already shown that
$\mathbb{P}(\alpha(R)\ge2n/q)=o(1)$. Thus, for all sufficiently large $d$, there is a realization of $R$ for which simultaneously $\alpha(R)<2n/q$ and $X+Y\le n/2$. Fix such a realization. Let $B$ be the set of vertices whose degree in $R$ is greater than $8q^2$, so $|B|=X$. Next, for every cycle of $R$ of length at most $g$, choose one vertex on the cycle, and let $Z$ be the set of all vertices chosen in this way. Since there are $Y$ such cycles, $|Z|\le Y$. Now let $G:=R-(B\cup Z)$. Since $|B\cup Z| \le X+Y \le n/2$, we have $|V(G)|\ge n/2$. Moreover, by construction, $
\Delta(G)\le8q^2$ and  $\girth(G)>g$. Finally, since $\alpha(G)\le\alpha(R)<2n/q$, it follows that
\[
  \chi(G)
  \ge \frac{|V(G)|}{\alpha(G)}
  > \frac{n/2}{2n/q}
  = \frac{q}{4}
  = k.
\]
This completes the proof. 
\end{proof}

Combining \Cref{girth-bound,random}, we can finally prove \Cref{not-chi-bound}. 

\begin{proof}[Proof of \Cref{not-chi-bound}]
For each $k\ge1$, apply \Cref{girth-bound} with $q=4k$, $M=8q^2$, and $\varepsilon=1/k$ to obtain a constant $C = C(q,M,1/k)$. Choose $d_k$ sufficiently large that \Cref{random} applies and $\lfloor d_k^{1/4}\rfloor>\max\{3,C\log(d_k+1)\}$.
Let $G_k$ be given by \Cref{random}. By \Cref{girth-bound}, no subgraph of $G_k$ containing a cycle is a $(1/k)$-edge-expander. We show the following.

\begin{nitem}\label{sepG_k} Every $n$-vertex subgraph $H$ of $G_k$ has a balanced separator $S$ of size at most $n/k+1$. 
\end{nitem}

\begin{claimproof}[Proof of \eqref{sepG_k}] We construct the separator iteratively. Set $J_0:=H$. Suppose that $J_i$ has already been defined. If $|V(J_i)|\le 2n/3$ or $J_i$ is a forest, stop. Otherwise, $J_i$ contains a cycle. Since $J_i$ is a subgraph of $G_k$, the preceding observation implies that $J_i$ is not a $(1/k)$-edge-expander. Hence, there exists a nonempty set $A_i\subseteq V(J_i)$ such that $|A_i|\le |V(J_i)|/2$ and $|\partial_{J_i}(A_i)|<|A_i|/k$. Let $S_i$ be the set of vertices of $A_i$ incident with an edge of $\partial_{J_i}(A_i)$. Since each edge of $\partial_{J_i}(A_i)$ has exactly one endpoint in $A_i$, we have $|S_i|\le |\partial_{J_i}(A_i)| <|A_i|/k$. We then set $J_{i+1}:=J_i-A_i$. 

The vertices of $S_i$ will belong to the final separator. By the definition of $S_i$, there is no edge of $J_i-S_i$ between $A_i\setminus S_i$ and $V(J_{i+1})$. Thus, after deleting the separator vertices, every component contained in $A_i\setminus S_i$ is separated from all vertices considered at subsequent steps. Moreover, $|A_i\setminus S_i|
\le |A_i|
\le |V(J_i)|/2
\le n/2$.
Since at least one vertex is removed from $J_i$ at every step, the process eventually terminates. Let $J_m$ be the graph obtained at termination and let $S':=\bigcup_{i=0}^{m-1}S_i$. Since the sets $A_0,\ldots,A_{m-1}$ are pairwise disjoint, we have 
\[
  |S'|
  \le \sum_{i=0}^{m-1}|S_i|
  \leq \frac1k\sum_{i=0}^{m-1}|A_i|
  \le \frac nk.
\]

We now complete the separator. If $|V(J_m)|\le 2n/3$, set $S:=S'$. Otherwise, by the stopping rule, $J_m$ is a forest. If every component of $J_m$ has at most $2n/3$ vertices, again set $S:=S'$. Otherwise, there is a unique component $T$ of $J_m$ with more than $2n/3$ vertices. Choose a centroid $v$ of the tree $T$ and set $S:=S'\cup\{v\}$. Every component of $T-v$ has at most $|V(T)|/2\le n/2$ vertices.

Clearly, $|S|\le n/k+1$. Moreover, every component separated off at one of the iterative steps has at most $n/2$ vertices. The components contained in the final graph $J_m-S$ have at most $2n/3$ vertices by construction. Hence, every component of $H-S$ has at most $2n/3$ vertices. 
\end{claimproof}

Let $\mathcal C$ be the class of all graphs isomorphic to subgraphs of some $G_k$, for $k \in\mathbb{N}$. By definition, $\mathcal C$ is monotone. Moreover, every graph in $\mathcal C$ is triangle-free, while $\chi(G_k)>k$ for every $k$, so $\mathcal C$ has unbounded chromatic number. It remains to verify that $\mathcal{C}$ has sublinear separators. Fix an integer $K\ge2$, and let $N_K:=\max_{1\le k<K}|V(G_k)|$. If $H\in\mathcal C$ has $n>N_K$ vertices, then $H$ cannot be a subgraph of any $G_k$ with $k<K$. Hence, $H$ is a subgraph of some $G_k$ with $k\ge K$. By~\eqref{sepG_k},
\[
  \mathsf{sep}(H)
  \le \frac nk+1
  \le \frac nK+1.
\]
Let $\varepsilon>0$ and choose $K$ so large that $1/K<\varepsilon/2$. Then, for every $n>\max\left\{N_K,2/\varepsilon\right\}$ and every $n$-vertex graph $H\in\mathcal C$, we have
\[
  \frac{\mathsf{sep}(H)}{n}
  \le \frac1K+\frac1n
  < \varepsilon.
\]
This concludes the proof.
\end{proof}

\section{Concluding remarks}

Our main result, \Cref{equivalence}, provides a strongly sublinear variant of a recent result of \citet{CESL25}. More generally, it shows that a large number of algorithmically useful properties that were introduced independently in the literature in fact describe the same hereditary graph classes. Together with \Cref{algo}, this also yields a subexponential-time algorithm for constructing tree decompositions of strongly sublinear independence number. 

We believe that our equivalence theorem can be further extended to capture another property particularly useful in designing PTASes for maximization problems, namely fractional $\tin$-fragility with polynomial fragility function. We observed in \Cref{poly-fragility-alpha-separators} that polynomial-rate fractional $\tin$-fragility implies the equivalent conditions of \Cref{equivalence}. A first step towards the converse direction would be to prove \Cref{alphaDvorak}, and we remark that an interesting testbed is the class of pseudo-disk graphs; whether it is fractionally $\tin$-fragile was already asked in \cite[Question~7]{GMY24}.   

\paragraph{AI disclosure.} ChatGPT-5.6 Sol was used as follows. It suggested the critical use of the induced sparsification lemma (\Cref{induced-sparsification}) to prove \textup{(ii)}$\Rightarrow$\textup{(iv)} in \Cref{equivalence}. It assisted in developing a number of technical details in the proof of \Cref{equivalence}. Finally, the proof of \Cref{not-chi-bound} is entirely due to ChatGPT. In all these cases the author reviewed and edited the output as needed, and takes full responsibility for the content of the paper.

\bibliographystyle{plainnat}
\bibliography{biblio}

\end{document}